\newcolumntype{?}{!{\vrule width 0.7pt}}
\newcolumntype{T}[1]{S[table-format=#1]}
\newcolumntype{U}[1]{S[table-format=#1,
                       round-mode=places, 
                       round-precision=2]}
\theoremstyle{plain}
\newtheorem{theorem}{Theorem}
\newtheorem{lemma}[theorem]{Lemma}
\newtheorem{corollary}[theorem]{Corollary}
\theoremstyle{definition}
\newtheorem{remark}[theorem]{Remark}
\newtheorem{definition}[theorem]{Definition}
\newtheorem*{assumption}{Assumption}
\numberwithin{theorem}{section}
\numberwithin{equation}{section}
\DeclareMathOperator*{\argmax}{arg\,max}
\renewcommand{\leq}{\leqslant}
\renewcommand{\geq}{\geqslant}
\newcommand{\BF}{\mathcal{BF}}
\newcommand{\CBF}{\mathcal{CBF}}
\newcommand{\SBF}{\mathcal{SBF}}
\newcommand{\CM}{\mathcal{CM}}
\newcommand{\Dcal}{\mathcal{D}}
\newcommand{\Lscr}{\mathscr{L}}
\newcommand{\Acal}{\mathcal{A}}
\newcommand{\Bcal}{\mathcal{B}}
\newcommand{\Fcal}{\mathcal{F}}
\newcommand{\Kcal}{\mathcal{K}}
\newcommand{\nat}{\mathds{N}}
\newcommand{\real}{\mathds{R}}
\newcommand{\Ee}{\mathds{E}}
\newcommand{\Pp}{\mathds{P}}
\newcommand{\I}{\mathds{1}}
\newcommand{\omu}[3]{\overset{\textrm{#1}}{\underset{\textrm{#3}}{#2}}}
\newcommand{\pomu}[3]{\overset{\textrm{\phantom{#1}}}{\underset{\textrm{\phantom{#3}}}{#2}}}
\newcommand{\rli}[3]{\mathbin{\sideset{_{\,#1}^{\mathrm{R}}}{_{#2}^{#3}}{\mathop{\mathrm{I}}}}}  
\newcommand{\cei}[3]{\mathbin{\sideset{_{~\,#1}^{\mathrm{Ce}}}{_{#2}^{#3}}{\mathop{\mathrm{I}}}}}
\newcommand{\rld}[3]{\mathbin{\sideset{_{\,#1}^{\mathrm{R}}}{_{#2}^{#3}}{\mathop{\mathrm{D}}}}}
\newcommand{\mad}[3]{\mathbin{\sideset{_{#1}^{\mathrm{M}}}{_{#2}^{#3}}{\mathop{\mathrm{D}}}}}
\newcommand{\cd}[3]{\mathbin{\sideset{_{\,#1}^{\mathrm{C}}}{_{#2}^{#3}}{\mathop{\mathrm{D}}}}} 
\newcommand{\ced}[3]{\mathbin{\sideset{_{~\,#1}^{\mathrm{Ce}}}{_{#2}^{#3}}{\mathop{\mathrm{D}}}}} 
\DeclareMathOperator{\Ext}{Ext}
\begin{document}

\title[Bernstein Fractional Derivatives and Processes]{Bernstein Fractional Derivatives: Censoring and Stochastic Processes}

\author[D.~Berger]{David Berger}
\address[D.~Berger]{
	TU Dresden, 
	Fakult\"at Mathematik, 
	Institut f\"ur Mathematische Stochastik, 
	01062 Dresden, Germany. 
	E-Mail: \textnormal{david.berger2@tu-dresden.de}}

\author[C.~Li]{Cailing Li}
\address[C.~Li]{
	TU Dresden, 
	Fakult\"at Mathematik, 
	Institut f\"ur Mathematische Stochastik, 
	01062 Dresden, Germany. 
	E-Mail: \textnormal{cailingli.math@gmail.com}}

\author[R.L.~Schilling]{Ren\'e L.\ Schilling}
\address[R.L.~Schilling]{
	TU Dresden, 
	Fakult\"at Mathematik, 
	Institut f\"ur Mathematische Stochastik, 
	01062 Dresden, Germany. 
	E-Mail: \textnormal{rene.schilling@tu-dresden.de}}

\subjclass[2020]{Primary: 60G51; 26A33. Secondary: 45D05; 60G17; 60J35; 60J50.}

\keywords{Riemann--Liouville derivative; Caputo derivative; Weyl--Marchaud derivative; fractional integral; censored process; subordination; Bernstein function.}

\thanks{This paper is based on Cailing Li's PhD thesis \cite{2023_Li}, which was written at Technische Universit\"at Dresden.} 
\thanks{Our research was supported by the \emph{TU Dresden Graduate Academy}, the \emph{6G-life} project (BMBF 16KISK001K) and the Dresden--Leipzig \emph{ScaDS.AI} centre.}
 
\maketitle

\begin{abstract}
	We define censored fractional Bernstein derivatives on the positive half-line based on the Bernstein--Riemann--Liouville fractional derivative. The censored fractional derivative turns out to be the generator of the censored decreasing subordinator $S^c = (S_t^c)_{t\geq 0}$, which is obtained either via a pathwise construction by removing those jumps from the decreasing subordinator $(x-S_t)_{t\geq 0}$, $x>0$, that drive the path into negative territory, or via the Hille--Yosida theorem. Then we show that the censored decreasing subordinator has only finite life-time, and we identify various probability distributions related to $S^c$.
\end{abstract}


\section{Introduction}\label{intro}
In this paper we focus on a special class of L\'evy processes, the so-called subordinators, see e.g.\ \cite{1996_Bertoin,1999_Bertoin,1999_Satoa}. A \textbf{L\'evy process} is a stochastic process with c\`adl\`ag (right--continuous, finite left limits) paths and independent and stationary increments. A \textbf{subordinator} is a L\'evy process $S=(S_t)_{t\geq 0}$ with $S_0=0$ and a.s.\ increasing paths. The process $S$ is uniquely defined by its Laplace transform, which is given by 
\begin{equation}\label{intro-e02}
    \Ee\left(e^{- \lambda S_t}\right) = e^{-t f(\lambda )}, \quad  \lambda>0.
\end{equation}
The characteristic exponent $f$ is a \textbf{Bernstein function}. A Bernstein function $f\in\BF$ can be expressed by
\begin{equation}\label{intro-e04}
    f(\lambda) = a + b\lambda + \int_0^\infty (1-e^{-\lambda x})\,\mu(dx), \quad  \lambda>0,
\end{equation}
where $a\geq 0$, $b\geq 0$ is the drift and $\mu$ is the L\'evy (or jump) measure, i.e.\ a Borel measure on $(0,\infty)$ such that $\int_0^\infty \min\{1,x\}\,\mu(dx)<\infty$.

Every subordinator is a Feller process on $C_\infty[0,\infty) = \left\{u\in C[0,\infty)\::\: \lim_{x\to\infty} u(x) = 0\right\}$, and its infinitesimal generator is given by 
\begin{align*}
	\Acal u(x)
	= au(x) + b\cdot \frac d{dx}u(x)+\int_{(0,\infty)} \left(u(x+t)-u(x)\right)\mu(dt) 
\end{align*}
for suitable functions $u:[0,\infty)  \to \real$.

We will restrict ourselves to so-called \textbf{complete Bernstein functions} $f\in\CBF$ which are of the form \eqref{intro-e04}, but the jump measure $\mu(dt)$ is absolutely continuous w.r.t.\ Lebesgue measure and the density $m(t) = \frac{\mu(dt)}{dt}$ is a completely monotone function. Among the most prominent examples of a complete Bernstein functions are the fractional powers $x^\alpha$, $\alpha\in (0,1)$. Our standard reference on (complete) Bernstein functions is the monograph \cite{2012_Schilling}, see also the appendix. It is obvious from the results in the appendix that most of the results presented for the class $\CBF$ will remain valid for special Bernstein functions.

There is a deep connection between generators of (stable) subordinators and (classical) fractional derivatives. Fractional derivatives, in particular fractional time derivatives, have recently become important tools to model real-world phenomena. There are important applications in Physics, Chemistry and Biology, see e.g.\ Klages \emph{et al.} \cite{2008_Klages}. On the mathematical side, we refer to the monographs \cite{1993_Samko, 1998_Podlubny, 2010_Diethelm, 2019_Meerschaert} and the papers \cite{2007_Mainardi, 2017_Chen, 2018_HernandezHernandez, 2016_HernandezHernandez, 2018_Sin}.

Let us briefly sketch the rationale of this paper in a classical context. Probably the most general way to introduce fractional derivatives on the half-line is via the \textbf{Weyl--Marchaud fractional derivative}, which is given by
\begin{align}\label{intro-e06}
	\mad{}{+}{\alpha} \phi(x)
	&=\frac{\alpha}{\Gamma(1-\alpha)}\int_{0}^\infty \left(\phi(x)-\phi(x-s)\right) \frac{ds}{s^{1+\alpha}}.
\end{align}
The problem is, however, that this form requires $\phi$ to be a function on $\real$ rather than $(0,\infty)$, and we will overcome this problem by suitable extensions of $\phi|_{(0,\infty)}$, see Section~\ref{bd} for details. Even more striking is the fact that \eqref{intro-e06} can be seen as the formal adjoint of the subordinator generator $\Acal$ with $\mu(dt) = \alpha \Gamma(1-\alpha)^{-1} t^{-1-\alpha}\,dt$. Indeed, a formal calculation in $L^2(\real)$ gives
\begin{align*}
	\Acal^*u(x) = au(x) - b\cdot \frac d{dx}u(x) - \int_{(0,\infty)} \left(u(x)-u(x-t)\right)\mu(dt).
\end{align*}
This indicates that fractional derivatives are closely related to decreasing subordinators (which can be identified with subordinators running backwards in time). In the classical setting, this situation was studied by Du \emph{et al.} \cite{2021_Du}, where the (classical) \textbf{censored fractional derivative} is given by 
\begin{equation}\label{cen-der-002}
	\ced{0}{}{\alpha}\phi(x)=\frac{\alpha}{\Gamma(1-\alpha)}\int_0^x \left(\phi(x)-\phi(x-s)\right) \frac{ds}{s^{\alpha+1}},
	\quad\alpha\in (0, 1).
\end{equation}
Notice that the integral extends over $(0,x)$ rather than $(0, \infty)$, due to ``censoring''. The relation between $\ced{0}{}{\alpha}$ and the censored decreasing stable subordinator is also discussed in \cite{2021_Du}.

With a view to the Weyl--Marchaud form of the fractional derivative, we introduce Bernstein fractional derivatives (of Riemann--Liouville and Caputo type) and we establish the connection with the well-known Bernstein functional calculus, that arises in connection with Bochner's subordination. It turns out that the Bernstein fractional derivatives coincide with various generalizations of fractional derivatives using Sonine pairs (sometimes called convolution-type derivatives, distributed order derivatives etc.), see Kochubei~\cite{2011_Kochubei}, Toaldo~\cite{2015_Toaldo}, Chen~\cite{2017_Chen}, so we do not claim originality here, but the embedding into the existing theory of subordination is new.

In Section~\ref{cvp} we generalize the results by Du \emph{et al.} on censored fractional derivatives, the main result being the inversion of the censored Bernstein derivative by using the Bernstein fractional integral. Section~\ref{re} contains the resolvent equation related to the censored Bernstein derivative, which is needed for the Hille--Yosida approach to the censored decreasing subordinator $S^c$. In Section~5 we give a first (pathwise) construction using piecing out, and in Section~6 we show that $S^c$ as a Feller process whose generator is the censored Bernstein derivative. Here we indicate a further construction via the Hille--Yosida theorem. Finally, we see that $S^c$ has finite life-time.
The appendix contains a short and self-contained approach to Sonine pairs using special Bernstein functions.

\section{Bernstein derivatives}\label{bd}

In this section we give a systematic description of fractional derivatives (of Riemann--Liouville, Caputo and Weyl--Marchaud type) on the half-line which are induced by Bernstein functions. To keep notation simple, we restrict ourselves to derivatives of order $\alpha < 1$.

Recall, see e.g.\ Samko \emph{et.~al.}~\cite[(2.17)]{1993_Samko}, that the classical \textbf{Riemann--Liouville fractional derivative} on $(0,\infty)$ is given by
\begin{gather}\label{bd-e02}
	\rld{0}{}{\alpha}\phi(x) = \frac 1{\Gamma(1-\alpha)} \frac{d}{dx}\int_0^x \frac 1{(x-u)^{\alpha}}\,\phi(u)\,du, \quad x>0,
\intertext{and the \textbf{Caputo fractional derivative} is of the form}\label{bd-e04}
	\cd{0}{}{\alpha}\phi(x) = \frac 1{\Gamma(1-\alpha)}\int_0^x \frac 1{(x-u)^{\alpha}}\,\frac{d}{du}\,\phi(u)\,du, \quad x>0.
\end{gather}
Using integration by parts, the connection between these derivatives turns out to be
\begin{gather}\label{bd-e06}
	\rld{0}{}{\alpha}\phi(x)
	= \cd{0}{}{\alpha}\phi(x) + \frac 1{\Gamma(1-\alpha)}\frac{\phi(0+)}{x^\alpha}, \quad x>0.
\end{gather}
Yet another integration by parts reveals that for $x>0$
\begin{align}\label{bd-e08}
	\rld{0}{}{\alpha}\phi(x)
	&= \frac{\alpha}{\Gamma(1-\alpha)} \int_0^x \left(\phi(x)-\phi(x-u)\right) \frac{du}{u^{1+\alpha}} + \frac 1{\Gamma(1-\alpha)}\frac{\phi(x)}{x^\alpha}\\
	\label{bd-e10}
	&= \frac{\alpha}{\Gamma(1-\alpha)} \int_0^x \left(\phi(x)-\phi(x-u)\right) \frac{du}{u^{1+\alpha}} 
	+ \frac{\alpha}{\Gamma(1-\alpha)} \int_x^\infty \phi(x)\,\frac{du}{u^{1+\alpha}}.
\end{align}
If we extend $\phi:(0,\infty)\to\real$ to $\real$ by setting
\begin{gather}\label{bd-e12}
	\phi^\circ(x) := \Ext_\circ\phi(x) := 
		\begin{cases}
		\phi(x), & x>0,\\
		0, &x\leq 0,
		\end{cases}
		\qquad\text{(\textbf{killing extension})}
\end{gather}
we see that \eqref{bd-e10} becomes
\begin{align}\label{bd-e14}
	\rld{0}{}{\alpha}\phi(x)
	&= \frac{\alpha}{\Gamma(1-\alpha)} \int_0^\infty \left(\phi^\circ(x)-\phi^\circ(x-u)\right) \frac{du}{u^{1+\alpha}}, \quad x>0,
\end{align}
and this is the  \textbf{(Weyl--)Marchaud representation}, often denoted by $\mad{}{+}{\alpha}\phi(x)$. In the same way we can get the Caputo derivative
\begin{align}\label{bd-e16}
	\cd{0}{}{\alpha}\phi(x)
	&= \frac{\alpha}{\Gamma(1-\alpha)} \int_0^\infty \left(\phi^\#(x)-\phi^\#(x-u)\right) \frac{du}{u^{1+\alpha}},\quad x>0,
\end{align}
if we use the following extension of $\phi$ to $\real$: 
\begin{gather}\label{bd-e18}
	\phi^\#(x) := \Ext_\#\phi(x) := 
	\begin{cases}
		\phi(x), & x>0,\\
		\phi(0+), &x\leq 0,
	\end{cases}
	\qquad\text{(\textbf{sticky extension})}.
\end{gather}
The well-definedness of the integrals \eqref{bd-e02}--\eqref{bd-e04} require different smoothness and decay properties of $\phi$. In general, the Marchaud representation (combined with \eqref{bd-e06}) extends both $\rld{0}{}{\alpha}$ and $\cd{0}{}{\alpha}$. 

Using the Weyl--Marchaud derivative it is possible to make the connection between fractional derivatives and Bernstein functions. Recall, cf.\ \cite[p.~vii]{2012_Schilling}, the well known formula
\begin{gather}\label{bd-e20}
	x^\alpha = \frac{\alpha}{\Gamma(1-\alpha)} \int_0^\infty \left(1-e^{-ux}\right) \frac{du}{u^{1+\alpha}}, \quad x>0.
\end{gather}
Introducing the shift operators $T_t\phi(x):=\phi^\circ(x-t)$, $x\in\real$, $t>0$, it is easy to see that $(T_t\phi|_{(0,\infty)})^\circ = T_t\phi^\circ$, and that $(T_t)_{t\geq 0}$ is an operator semigroup. The Laplace \textbf{symbol} of $T_t$ is
\begin{gather*}
	e^{-\lambda t},
	\text{\ \ that is,\ \ }
	\Lscr[T_t\phi;\lambda] = e^{-\lambda t}\Lscr[\phi;\lambda],
\end{gather*}
and the infinitesimal generator of the semigroup $(T_t)_{t\geq 0}$ is $\Acal=-\frac d{dx}$. Thus, \eqref{bd-e10} resp.\ \eqref{bd-e14} becomes
\begin{gather*}
	\rld{0}{}{\alpha}\phi(x) = \left(\frac{d}{dx}\right)^\alpha\phi(x),\quad x>0,
\end{gather*}
where $\left(\frac{d}{dx}\right)^\alpha$ is understood in the sense of the Bernstein functional calculus developed in \cite{1998_Schillinga} and \cite[Ch.~13]{2012_Schilling}. The above equality holds on the intersection of the respective domains. To wit, $-\left(\frac{d}{dx}\right)^\alpha = -f\left(-\Acal\right)$, $f(x)=x^\alpha$, $\Acal=-\frac d{dx}$, is the generator of the Bochner subordination (with a stable subordinator) of the deterministic motion $t\mapsto x-t$.

We can now define the (Marchaud form of the) Bernstein fractional derivative for a given Bernstein function $f$. This derivative extends the above sketched relations between classical fractional derivatives.

\begin{definition}\label{bd-03}
Let $f$ be a Bernstein function with triplet $(0,b,\mu)$ and $u:\real\to\real$ be some function. The \textbf{Bernstein fractional derivative} of $u$ with respect to $f$ is defined by
\begin{align}\label{bd-e22}
    \mad{}{+}{f}u(x)
    := b\cdot \frac{d}{dx}u(x) + \int_{(0,\infty)} \left(u(x)-u(x-t)\right)\mu(dt)
\end{align}
provided that this expression is well-defined.
\end{definition}

At the moment this is a formal definition. If $b>0$, we need to make sure that $u'$ exists in some (weak) sense, defying the idea to have a truly ``fractional'' derivative. So we assume from now on that $b=0$.

We observe that the derivative is defined for functions on the whole real line $\real$, although we are actually interested in functions on the half-line $(0,\infty)$. Since we may always extend $\phi$ from $(0,\infty)$ to the whole line, the definition \eqref{bd-e22} is both more general and more convenient computationally. In analogy with the classical case discussed above, cf.\ \eqref{bd-e08}, \eqref{bd-e10}, we get in this way the Riemann--Liouville form of the Bernstein fractional derivative:
\begin{align}\label{bd-e24}
	\rld{0}{}{f}\phi(x)
	:= \mad{}{+}{f}\phi^\circ(x)
	&= \int_{(0,\infty)} \left(\phi^\circ(x)-\phi^\circ(x-s)\right)\mu(ds)\\
	\notag
	&= \int_{(0,x]}\left(\phi(x)-\phi(x-s)\right)\mu(ds)+\phi(x)\bar\mu(x),
\end{align}    
as long as the integrals are well-defined.  Since $\mad{}{+}{f}\phi^\circ|_{(-\infty,0]}=0$, $\rld{}{+}{f}$ can be seen as an operator acting on functions on the half-line. In fact, there is also the analogue of the classical Riemann--Liouville derivative, if the integral in \eqref{bd-e24} exists:
\begin{align}
	\rld{0}{}{f}\phi(x)
	= \int_{(0,\infty)} \left(\phi^\circ(x)-\phi^\circ(x-s)\right)\mu(ds)
\notag	&= \int_{(0,\infty)} \frac d{dx} \int_{x-s}^x \phi^\circ(t)\,dt\,\mu(ds)\\
\notag	&= \frac d{dx} \int_{(0,\infty)}  \int_{0}^s \phi^\circ(x-t)\,dt\,\mu(ds)\\
\notag	&= \frac d{dx} \int_0^\infty \phi^\circ(x-t) \int_{(t,\infty)}  \mu(ds)\,dt\\
\notag	&= \frac d{dx} \int_0^\infty \phi^\circ(x-t) \bar\mu(t)\,dt\\
\label{bd-e26}	&= \frac d{dx} \int_0^x \phi(x-t)\bar\mu(t)\,dt.
\end{align}

In order to make things rigorous, we consider $\rld{0}{}{f}$ (or $\mad{}{+}{f}\circ\Ext_\circ$) in the space $L^1(0,\infty)$, which we may interpret as subspace of $L^1(\real)$, if we use the killing extension \eqref{bd-e12}. Indeed, if $\mu$ is a finite measure, we have
\begin{align}
	\int_0^\infty\left|\int_{(0,\infty)} \left(\phi^\circ(x)-\phi^\circ(x-t)\right)\mu(dt)\right| dx
\notag	&\leq \int_{(0,\infty)}\int_0^\infty \left|\phi^\circ(x)-\phi^\circ(x-t)\right| dx\, \mu(dt)\\
\label{bd-e27}	&\leq 2\mu(0,\infty)\|\phi\|_{L^1(0,\infty)},
\end{align}
and \eqref{bd-e24} is well-defined and continuous on $L^1(0,\infty)$.

Using the idea of the Yosida approximation, we approximate $f(x)$ by a sequence $f_n(x)$ of Bernstein functions with bounded jump measures:
\begin{gather*}
	f_n(x) := \frac{nf(x)}{n+f(x)} = n\left(1-\frac{n}{n+f(x)}\right) \xrightarrow[n\to\infty]{}f(x),\quad x>0.
\end{gather*}
Since $f$ is a Bernstein function, $\frac{n}{n+f}$ is completely monotone, hence the Laplace transform of a measure $\rho_n$ with total mass $\Lscr[\rho_n;0] = \frac{n}{n+f(0)}=1$. Thus,
\begin{gather}\label{bd-e28}
	f_n(x) = n\int_{(0,\infty)} \left(1-e^{-tx}\right)\rho_n(dt).
\end{gather}
In particular, 
\begin{gather}
	\rld{0}{}{f_n}\phi(x) = \int_{(0,\infty)} \left(\phi^\circ(x) - \phi^\circ(x-t)\right) n\rho_n(dt)
\end{gather}
is a bounded operator on $L^1(0,\infty)$, see \eqref{bd-e27}. Therefore, the following definition gives a natural domain for $\rld{0}{}{f}$ for a general $f$ (if $b=0$):
\begin{gather}\label{bd-e30}
	\Dcal
	:= \left\{\phi\in L^1(0,\infty) \mid \exists \psi\in L^1(0,\infty)\::\: \lim_{n\to\infty}\left\|\rld{0}{}{f_n}\phi - \psi\right\|_{L^1(0,\infty)}=0\right\},
\end{gather}
and we set $\rld{0}{}{f}\phi := \psi$ on $\Dcal\bigl(\rld{0}{}{f}\bigr) := \Dcal$. In Corollary~\ref{bd-07} we will identify $(-\rld{0}{}{f},\Dcal)$ with the infinitesimal generator of a subordinate operator semigroup. If we take (for sufficiently nice functions $\phi$) the Laplace transform in \eqref{bd-e24}, the following lemma shows that the present definition of $\rld{0}{}{f}$ is consistent with  Definition~\ref{bd-03}.
\begin{lemma}\label{bd-05}
	Let $f$ be a Bernstein function with triplet $(0,0,\mu)$, $f_n = nf/(n+f)$ as in \eqref{bd-e28}, and $\phi,\psi\in L^1(0,\infty)$. The following assertions are equivalent
	\begin{enumerate}[label=\upshape\alph*),ref=\upshape\alph*),leftmargin=*,itemsep=4pt,itemindent=\parindent]
		\item\label{bd-05-a} 
			$\Lscr[\psi;\lambda] = f(\lambda)\Lscr[\phi;\lambda]$ for all $\lambda >0$.
		\item\label{bd-05-b}
			$\lim_{n\to\infty}\left\|\rld{0}{}{f_n}\phi - \psi\right\|_{L^1(0,\infty)}=0$.
	\end{enumerate}
\end{lemma}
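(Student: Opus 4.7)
The direction (b)$\Rightarrow$(a) is quick. Since $n\rho_n$ is a finite measure, Fubini applied to the defining integral of $\rld{0}{}{f_n}\phi$ yields
\[
\Lscr[\rld{0}{}{f_n}\phi;\lambda]
= \Lscr[\phi;\lambda]\int_{(0,\infty)}\bigl(1-e^{-t\lambda}\bigr)\,n\rho_n(dt)
= f_n(\lambda)\,\Lscr[\phi;\lambda].
\]
Laplace transforms are continuous under $L^1$-convergence, so if $\rld{0}{}{f_n}\phi\to\psi$ in $L^1(0,\infty)$, then $\Lscr[\psi;\lambda] = \lim_{n\to\infty} f_n(\lambda)\Lscr[\phi;\lambda] = f(\lambda)\Lscr[\phi;\lambda]$ by monotone convergence $f_n\uparrow f$.

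For the substantive direction (a)$\Rightarrow$(b), the plan is to exploit the resolvent structure hidden in the Yosida-type formula $f_n(\lambda) = n - n^2/(n+f(\lambda))$. The function $\lambda\mapsto 1/(n+f(\lambda))$ is completely monotone and equals the Laplace transform of the potential measure $V_n(ds) := \int_0^\infty e^{-nt}\eta_t(ds)\,dt$, where $\eta_t$ is the law of the subordinator with exponent $f$; in particular $V_n([0,\infty)) = 1/n$. Thus $R_n^f g := (V_n\ast g^\circ)|_{(0,\infty)}$ defines a bounded linear operator on $L^1(0,\infty)$ with $\Lscr[R_n^f g;\lambda] = \Lscr[g;\lambda]/(n+f(\lambda))$ and $\|nR_n^f\|\le 1$. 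Assuming (a), I compute
\[
\Lscr[\rld{0}{}{f_n}\phi;\lambda]
= \frac{nf(\lambda)}{n+f(\lambda)}\,\Lscr[\phi;\lambda]
= \frac{n\,\Lscr[\psi;\lambda]}{n+f(\lambda)}
= \Lscr[nR_n^f\psi;\lambda].
\]
By injectivity of the Laplace transform on $L^1(0,\infty)$ this gives $\rld{0}{}{f_n}\phi = nR_n^f\psi$ almost everywhere, and the desired $L^1$-convergence reduces to the assertion that $nR_n^f\psi\to\psi$ in $L^1(0,\infty)$.

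The final step is the classical Yosida convergence: the killing-extended right-translations $T_t\phi(x):=\phi^\circ(x-t)$ form a $C_0$-contraction semigroup on $L^1(0,\infty)$, Bochner subordination with $f$ produces a $C_0$-contraction semigroup $(P_t^f)_{t\ge 0}$ on $L^1(0,\infty)$, its resolvent at $n>0$ equals $R_n^f$ (both sides have the same Laplace symbol), and then $nR_n^f\psi\to\psi$ in $L^1(0,\infty)$ for every $\psi\in L^1(0,\infty)$ is the textbook resolvent-approximation statement.

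The principal obstacle I anticipate is the rigorous identification of $R_n^f$ as a resolvent: one must verify that $(P_t^f)$ is strongly continuous on $L^1(0,\infty)$ (the shift semigroup is, and Bochner subordination preserves strong continuity) and that $R_n^f$ is its resolvent at $n$. All of this rests on the standard subordination machinery developed in \cite[Ch.~13]{2012_Schilling}; once it is in place, the clean identity $\rld{0}{}{f_n}\phi = nR_n^f\psi$ reduces (a)$\Rightarrow$(b) to a textbook semigroup fact and closes the proof.
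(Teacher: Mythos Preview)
Your proof is correct. Both your argument and the paper's hinge on the same identity: since $\Lscr[\rho_n;\lambda]=n/(n+f(\lambda))$, the Laplace computation gives $\rld{0}{}{f_n}\phi = \rho_n*\psi$ (in your notation $nR_n^f\psi$, since $nV_n=\rho_n$). The difference is only in how the convergence $\rho_n*\psi\to\psi$ in $L^1$ is justified. The paper does it by hand: it splits $\|\rho_n*\psi-\psi\|_{L^1}$ into two pieces, bounds them by $\int\!\int|\psi(x-t)-\psi(x)|\,dx\,\rho_n(dt)$ and $\int\!\int_0^t|\psi(x)|\,dx\,\rho_n(dt)$, and then uses that these integrands are bounded continuous functions of $t$ vanishing at $0$ together with the weak convergence $\rho_n\Rightarrow\delta_0$. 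You instead recognise $\rho_n*\psi=nR_n^f\psi$ as the resolvent of the subordinate shift semigroup and invoke the standard Yosida approximation $nR_n\psi\to\psi$. Your route is cleaner once the semigroup machinery is in place (and the paper does set it up in Corollary~\ref{bd-07}), while the paper's route is more self-contained and avoids appealing to strong continuity of the subordinate semigroup. Either way, (b)$\Rightarrow$(a) is handled identically in both.
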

\begin{proof}
	\ref{bd-05-a}$\Rightarrow$\ref{bd-05-b}:
	A direct computation shows that
	\begin{align*}
		\Lscr\left[\rld{0}{}{f_n}\phi;\lambda\right]
		= f_n(\lambda)\Lscr[\phi;\lambda]
		= \frac{f_n(\lambda)}{f(\lambda)} f(\lambda)\Lscr[\phi;\lambda]
		= \frac{n}{n+f(\lambda)} \Lscr[\psi;\lambda]
		= \Lscr[\rho_n;\lambda] \Lscr[\psi;\lambda].
	\end{align*}
	In view of the convolution theorem and the uniqueness of the Laplace transform we have $\rld{0}{}{f_n}\phi = \psi*\rho_n$. Using the fact that $\rho_n$ is a probability measure, we get
	\begin{align*}
		\left\|\psi*\rho_n - \psi\right\|_{L^1(0,\infty)}
		&= \int_0^\infty \left|\int_{(0,x]} \left(\psi(x-t)-\psi(x)\right)\rho_n(dt) - \psi(x)\int_{(x,\infty)}\rho_n(dt)\right| dt\\
		&\leq \int_0^\infty \int_{(0,x]} \left|\psi(x-t)-\psi(x)\right|\rho_n(dt)\,dx + \int_0^\infty \left|\psi(x)\right|\int_{(x,\infty)}\rho_n(dt)\, dt\\
		&\leq \int_{(0,\infty)}\int_0^\infty \left|\psi(x-t)-\psi(x)\right| dx \,\rho_n(dt) 
		+ \int_{(0,\infty)} \int_0^t \left|\psi(x)\right| dx \, \rho_n(dt).
	\end{align*} 
	From $\Lscr[\rho_n;\lambda] = n/(n+f(\lambda))\to 1$ we conclude that $\rho_n$ converges weakly (in the sense of measures) to $\delta_0$. Since
	the integrands appearing inside the integrals $\int\cdots\rho_n(dt)$ are bounded and continuous functions, which tend to zero as $t\downarrow 0$, we get
	\begin{gather*}
		\lim_{n\to\infty} \left\|\psi*\rho_n - \psi\right\|_{L^1(0,\infty)} = 0.
	\end{gather*}
	
	\medskip\noindent
	\ref{bd-05-b}$\Rightarrow$\ref{bd-05-a}: As in the first part, a direct calculation yields
	\begin{gather*}
		\Lscr\left[\rld{0}{}{f_n}\phi;\lambda\right] = \Lscr[\rho_n;\lambda] \Lscr[\phi;\lambda].
	\end{gather*}
	The $L^1$-convergence of $\rld{0}{}{f_n}\phi$ resp.\ the weak convergence of the measures $\rho_n\to\delta_0$ now imply \ref{bd-05-a}.
\end{proof}

\begin{corollary}\label{bd-07}
	Let $T_t:L^1(0,\infty)\to L^1(0,\infty)$, $T_t\phi(x):=\phi^\circ(x-t)$, $x,t>0$, be the shift semigroup on the half-line and denote by $-f\left(\frac d{dx}\right)$ the generator of the semigroup that arises via Bochner's subordination of the shift semigroup. Then $\Dcal\left(\rld{0}{}{f}\right)$ is its domain and $f\left(\frac d{dx}\right) = \rld{0}{}{f}$.
\end{corollary}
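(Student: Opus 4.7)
The plan is to identify $-f\bigl(\tfrac d{dx}\bigr)$ with $-\rld{0}{}{f}$ on the domain $\Dcal$ defined in \eqref{bd-e30} by bridging Lemma~\ref{bd-05} (a Laplace-transform characterisation) with the semigroup definition of the subordinate generator.

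First I would record that $(T_t)_{t\geq 0}$ is a strongly continuous contraction semigroup on $L^1(0,\infty)$ with Laplace symbol $\Lscr[T_t\phi;\lambda] = e^{-\lambda t}\Lscr[\phi;\lambda]$; this follows from a direct substitution using the killing extension. Bochner's subordination with the convolution semigroup of exponent $f$ then yields a strongly continuous contraction semigroup $(T_t^f)_{t\geq 0}$ on $L^1(0,\infty)$ with Laplace symbol $e^{-tf(\lambda)}\Lscr[\phi;\lambda]$, whose generator is, by definition, $-f\bigl(\tfrac d{dx}\bigr)$. Let $\Fcal$ denote its domain.

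To show $\Dcal \subseteq \Fcal$ together with agreement of the operators, fix $\phi \in \Dcal$ and set $\psi := \rld{0}{}{f}\phi$. By Lemma~\ref{bd-05}~\ref{bd-05-b}$\Rightarrow$\ref{bd-05-a}, $\Lscr[\psi;\lambda] = f(\lambda)\Lscr[\phi;\lambda]$. Taking the spatial Laplace transform of the two sides of the candidate identity $\phi - T_t^f\phi = \int_0^t T_s^f\psi\,ds$ gives, for all $\lambda>0$,
\[
	\bigl(1-e^{-tf(\lambda)}\bigr)\Lscr[\phi;\lambda]
	= \frac{1-e^{-tf(\lambda)}}{f(\lambda)}\,\Lscr[\psi;\lambda],
\]
and the two sides agree thanks to the Lemma. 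Injectivity of the Laplace transform on $L^1$ promotes this to an $L^1$-identity; dividing by $t$ and letting $t\downarrow 0$ (using strong continuity of $(T_s^f)$) yields $\phi \in \Fcal$ with $-f\bigl(\tfrac d{dx}\bigr)\phi = -\psi = -\rld{0}{}{f}\phi$. The reverse inclusion $\Fcal \subseteq \Dcal$ is symmetric: starting from the semigroup identity $\phi - T_t^f\phi = \int_0^t T_s^f\chi\,ds$ valid for $\phi \in \Fcal$ with $\chi := f\bigl(\tfrac d{dx}\bigr)\phi$, the same Laplace computation gives $\Lscr[\chi;\lambda] = f(\lambda)\Lscr[\phi;\lambda]$, and Lemma~\ref{bd-05}~\ref{bd-05-a}$\Rightarrow$\ref{bd-05-b} places $\phi$ in $\Dcal$ with $\rld{0}{}{f}\phi = \chi$.

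The main obstacle is that Lemma~\ref{bd-05} is formulated purely on the Laplace side, so one must verify \emph{both} inclusions of domains rather than settle for equality on a core; this is handled by routing through the integrated semigroup identity and the injectivity of the Laplace transform on $L^1(0,\infty)$, which makes the argument reasonably self-contained and avoids an appeal to Trotter--Kato type approximation.
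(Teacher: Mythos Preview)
Your argument is correct. The two inclusions are handled cleanly: the integrated semigroup identity $\phi-T_t^f\phi=\int_0^t T_s^f\psi\,ds$ is first established via Laplace injectivity (for $\Dcal\subseteq\Fcal$) and then used as a known fact (for $\Fcal\subseteq\Dcal$), with Lemma~\ref{bd-05} bridging the Laplace and $L^1$ pictures in both directions. The only small technical points---Fubini to exchange $\Lscr$ with $\int_0^t T_s^f(\cdot)\,ds$, and $f(\lambda)>0$ for $\lambda>0$ so that one may cancel $1-e^{-tf(\lambda)}$---are routine.

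The paper's proof is a one-line appeal to the general Bernstein functional calculus: after embedding $L^1(0,\infty)\hookrightarrow L^1(\real)$ via the killing extension, \cite[Cor.~13.20, Rem.~13.21]{2012_Schilling} identifies the domain of the subordinate generator precisely as the set of $\phi$ for which the Yosida-type approximants $f_n(-\Acal)\phi$ converge strongly, which is exactly how $\Dcal$ was defined in \eqref{bd-e30}. So the paper short-circuits the argument by matching the \emph{definition} of $\Dcal$ directly to a known characterisation of the subordinate domain, whereas you route through the \emph{Laplace characterisation} of $\Dcal$ (Lemma~\ref{bd-05}) and a semigroup computation. Your approach is more self-contained---it does not require knowing the cited corollary---and makes the role of the Laplace symbol explicit; the paper's approach is shorter and shows that the result is really a special case of the abstract functional calculus.
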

\begin{proof}
	If we embed $L^1(0,\infty)$ into $L^1(\real)$ by $\phi \mapsto u:=\Ext_\circ\phi$, the claim follows from the general theory of the Bernstein functional calculus, cf.\ \cite[Cor.~13.20, Rem.~13.21]{2012_Schilling}. 
\end{proof}

The inverse of the classical Riemann--Liouville fractional derivative is the Riemann--Liouville fractional integral. A similar result holds for Bernstein fractional derivatives, but the corresponding fractional integral is more complicated. Its existence is guaranteed by so-called Sonine pairs. We will discuss this part of the theory in the context of continuous functions (as we need it later in this form), which will also give a stronger $L^1$-version in weighted function spaces. Sonine pairs have been studied by various authors, see e.g.\ \cite{2011_Kochubei, 2002_Samko, 2003_Samko, 2020_Hanyga, 2022_Luchko}; a self-contained exposition of Sonine pairs and special Bernstein functions is given in the Appendix.
\begin{definition}\label{bd-09}  
	Let $g,h:(0,\infty)\to[0,\infty)$ be positive measurable functions such that $g,h\in L^1_{\mathrm{loc}}[0,\infty)$. We call $(g,h)$ a \textbf{positive Sonine pair} if 
	\begin{equation}\label{bd-e32}
		h*g(x) = \int_{(0,x)} h(x-t)\,g(t)\,dt \equiv 1, \quad x>0.
	\end{equation}
	In abuse of notation, we still call $(g,h)$ a positive Sonine pair, if either $g$ or $h$ is a positive measure on $(0, \infty)$. 
\end{definition}
If $(g,h)$ is a positive Sonine pair, the Laplace transform satisfies
\begin{gather*}
	\Lscr[\I;\lambda]
	=\int_{0}^\infty e^{-\lambda z}\,dz
	= \frac{1}{\lambda}
	=\Lscr[h\ast g;\lambda]
	= \Lscr[h;\lambda]\cdot\Lscr[g;\lambda].
\end{gather*}
Hence, we obtain the identity
\begin{gather*}
	\Lscr[h;\lambda]
	= \frac{1}{\lambda \Lscr[g;\lambda]},
\end{gather*}
which leads to a necessary and sufficient condition for the existence of a positive Sonine pair in terms of completely monotone functions, see the Appendix.

From now on we will make some assumptions on the Bernstein function $f$.
\begin{assumption}
\begin{enumerate}[label=\upshape\bfseries A\arabic*.,ref=\upshape A\arabic*,leftmargin=*,labelindent=\parindent]
	\item\label{A1}
	$f$ is a Bernstein function satisfying
	\begin{gather*}
		f(0+) = \lim_{x\to 0} f(x) = 0,\quad
		\lim_{x\to\infty} f(x) = +\infty,\quad
		f'(0+) = \lim_{x\to 0} \frac{f(x)}{x} = +\infty,\quad\
		\lim_{x\to\infty} \frac{f(x)}{x} = 0.
	\end{gather*}
	\item\label{A2}
	$f$ is a \textbf{complete Bernstein function}, i.e.\  the L\'{e}vy measure $\mu$ has a completely monotone density $m$ with respect to Lebesgue measure.  
\end{enumerate}
\end{assumption}
It is not difficult to see that the conditions in Assumption~\ref{A1} can be equivalently expressed as: The L\'evy triplet of $f$ is of the form $(0,0,\mu)$ and the L\'evy measure satisfies $\int_{(0,1)} d\mu = +\infty$ and $\int_{(1,\infty)} x\,\mu(dx)=+\infty$, see e.g.\ \cite[Ch.~3]{2012_Schilling}. Moreover, it is obvious from~\ref{A1} that $f$ satisfies \ref{A1} if, and only if, the conjugate function $f^*(x) = \frac x{f(x)}$ is a Bernstein function that satisfies \ref{A1}.	Assumption \ref{A2} guarantees that $f^*$ is a complete Bernstein function, hence a Bernstein function, cf.\ \cite[Prop.~7.1]{2012_Schilling}.

The following lemma is a simple consequence of a more general result on special Bernstein functions, which we will defer to the appendix.
\begin{lemma}\label{bd-11}
	Let $f$ be a Bernstein function satisfying Assumptions~\ref{A1} and~\ref{A2}, and let $\bar\mu(x) = \mu(x,\infty)$, $x>0$, be the tail of the L\'evy measure.
	\begin{enumerate}[label=\upshape\alph*),ref=\upshape\alph*),leftmargin=*,itemsep=4pt,itemindent=\parindent]
	\item\label{bd-11-a} 
	There exists a function $k\in C(0,\infty)$ such that $(\bar\mu, k)$ is a positive Sonine pair. 
	
	\item\label{bd-11-b} 
	$k$ is the tail function $\bar\mu^*(x) = \mu^*(x,\infty)$ of the L\'evy measure $\mu^*$ of the conjugate Bernstein function $f^*(x) =  x/{f(x)}$ and $\Lscr[k;x] = 1/{f(x)}$.
	
	\item\label{bd-11-c} 
	$\bar\mu$ and $k$ are completely monotone functions.
	\end{enumerate}
\end{lemma}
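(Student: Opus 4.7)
The plan is to exhibit $k$ explicitly as the tail of the L\'evy measure of the conjugate Bernstein function $f^*(x) = x/f(x)$, and then to read off all three assertions from standard Laplace transform identities together with the CBF structure theory. Since the excerpt signals that this lemma is a specialization of an appendix result on special Bernstein functions, and since $\CBF\subset\SBF$ by \cite[Prop.~7.1]{2012_Schilling}, what I sketch below is essentially the CBF-specific shortcut.

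First I would use integration by parts in the representation \eqref{intro-e04} (Assumption~\ref{A1} forces the triplet of $f$ to be $(0,0,\mu)$) to obtain
\begin{equation*}
    f(\lambda) = \lambda \int_0^\infty e^{-\lambda t}\bar\mu(t)\,dt, \qquad \lambda > 0,
\end{equation*}
so that $\Lscr[\bar\mu;\lambda] = f(\lambda)/\lambda$; local integrability of $\bar\mu$ near $0$ is immediate from $\int(1\wedge x)\,\mu(dx)<\infty$. By Assumption~\ref{A2} and \cite[Prop.~7.1]{2012_Schilling}, $f^*\in\CBF$, and a short check of the four limits (using $f^*(0+) = 1/f'(0+)$, $f^*(\lambda)/\lambda = 1/f(\lambda)$, etc.) shows that $f^*$ again satisfies Assumption~\ref{A1}. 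Hence $f^*$ has triplet $(0,0,\mu^*)$ with completely monotone density $m^*$, and I set $k := \bar\mu^*$. The same integration by parts yields $\Lscr[k;\lambda] = f^*(\lambda)/\lambda = 1/f(\lambda)$, which settles (b).

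For (a), multiplying the two Laplace transforms gives
\begin{equation*}
    \Lscr[\bar\mu * k;\lambda] = \frac{f(\lambda)}{\lambda}\cdot\frac{1}{f(\lambda)} = \frac{1}{\lambda} = \Lscr[\I;\lambda],
\end{equation*}
so by uniqueness $\bar\mu*k \equiv 1$ almost everywhere. Since both $\bar\mu$ and $k$ are continuous on $(0,\infty)$ (being integrals of continuous densities), $\bar\mu*k$ is continuous, and the identity holds pointwise. For (c), Bernstein's theorem applied to the CM density $m$ provides a Radon measure $\sigma$ on $(0,\infty)$ with $m(t) = \int_{(0,\infty)} e^{-ts}\sigma(ds)$ (any atom at $0$ is excluded by $\bar\mu(x)<\infty$); Fubini then gives
\begin{equation*}
    \bar\mu(x) = \int_x^\infty m(t)\,dt = \int_{(0,\infty)} \frac{e^{-xs}}{s}\,\sigma(ds),
\end{equation*}
displaying $\bar\mu$ as the Laplace transform of the positive measure $s^{-1}\sigma(ds)$, hence $\bar\mu\in\CM$. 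Repeating the argument with $f$ replaced by $f^*$ gives $k\in\CM$.

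The main obstacle is minor: verifying that the four limits in Assumption~\ref{A1} transfer cleanly from $f$ to $f^*$, and checking that the atom of $\sigma$ at $0$ is absent so that the representation of $\bar\mu$ is well-defined. Everything else is bookkeeping with Laplace transforms, and the deeper existence theory for Sonine partners is relegated to the promised appendix treatment of special Bernstein functions.
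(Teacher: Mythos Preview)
Your proposal is correct and follows essentially the same route as the paper's appendix (Theorem~\ref{app-05}, Corollary~\ref{app-07}, Remark~\ref{app-09}): compute $\Lscr[\bar\mu;\lambda]=f(\lambda)/\lambda$ via Fubini, observe that $f^*\in\CBF$ inherits Assumption~\ref{A1} so that its triplet is $(0,0,\mu^*)$, set $k=\bar\mu^*$, and read off the Sonine relation from $\Lscr[\bar\mu]\Lscr[k]=1/\lambda$. The only minor divergence is in part~(c): the paper invokes the fact that for $f\in\CBF$ both $f(\lambda)/\lambda$ and $1/f(\lambda)$ are Stieltjes functions (hence Laplace transforms of CM functions), whereas you argue directly from Bernstein's representation of the CM density $m$ and integrate; your route is slightly more self-contained, the paper's is a one-line citation to \cite[Ch.~7]{2012_Schilling}.
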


\begin{definition}\label{bd-13}
	Let $\phi:(0,\infty)\to\real$ be a function, $f$ a Bernstein function satisfying \ref{A1} and \ref{A2} such that $(\bar\mu,k)$ is a positive Sonine pair. 
	The \textbf{Bernstein--Riemann--Liouville integral} with respect to the Bernstein function $f$ is defined by
	\begin{align}\label{bd-e36}
		\rli{0}{}{f}\phi(x)
		:= \int_0^x \phi(x-z)k(z)\,dz,\quad x>0,
	\end{align}
	provided that the integral is well-defined.
\end{definition}

In order to deal with well-definedness of the Bernstein fractional integral in spaces of continuous functions we introduce some function spaces.
\begin{definition}\label{bd-15}
Let $T>0$. We define the function space
\begin{align*}
	C_{\bar{\mu}}(0,T)
	&:=\left\{\phi \in C (0, T)\cap L^1(0,T) \mid \bar{\mu}*\phi\in C^1(0,T)\right\},\\
	C_{\bar{\mu}}[0,T)&:=C[0,T)\cap C_{\bar{\mu}}(0,T).
\end{align*}
\end{definition}
It is immediately clear from the definition of $C_{\bar{\mu}}(0,T)$ that 
	$
	\rld{0}{}{f}:C_{\bar{\mu}}(0,T)\to C(0,T)
	$.

\begin{theorem}\label{bd-17} 
	If $\psi\in C(0,T)\cap L^1(0,T)$, then $\rli{0}{}{f}\psi \in C_{\bar{\mu}}(0,T)$, and $\rld{0}{}{f}$ is the left inverse of $\rli{0}{}{f}$.
\end{theorem}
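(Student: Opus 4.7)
The plan is to read off the theorem directly from the Sonine pair identity $\bar\mu * k \equiv 1$ combined with associativity of convolution on the half-line. Observe that, by Definition~\ref{bd-13}, $\rli{0}{}{f}\psi = k*\psi$. First I would verify membership in $C(0,T)\cap L^1(0,T)$: by Lemma~\ref{bd-11}\ref{bd-11-c}, $k$ is completely monotone and in particular $k\in L^1_{\mathrm{loc}}[0,\infty)$, so $k*\psi$ is $L^1(0,T)$ by Young's inequality, while continuity on $(0,T)$ follows from dominated convergence using the local boundedness of $\psi$ on compacta together with the local $L^1$ control of $k$.

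The decisive step is to compute $\bar\mu * (k*\psi)$. Since $\bar\mu$, $k$, and (by truncation) $\psi$ are all locally integrable and, after restriction to a bounded interval, we can take $\psi$ positive by splitting into positive and negative parts, Tonelli's theorem justifies swapping the order of integration to rearrange the iterated convolution as $(\bar\mu*k)*\psi$. By Lemma~\ref{bd-11}\ref{bd-11-a}, $(\bar\mu,k)$ is a positive Sonine pair, so $\bar\mu*k\equiv 1$ on $(0,\infty)$, and thus
\begin{equation*}
    \bar\mu * \rli{0}{}{f}\psi\,(x) = \int_0^x \psi(t)\,dt, \qquad x\in(0,T).
\end{equation*}
Because $\psi\in C(0,T)$, the right-hand side is $C^1(0,T)$ with derivative $\psi(x)$ by the fundamental theorem of calculus. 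This proves both that $\rli{0}{}{f}\psi\in C_{\bar\mu}(0,T)$ and, via the representation \eqref{bd-e26} of $\rld{0}{}{f}$, that
\begin{equation*}
    \rld{0}{}{f}\rli{0}{}{f}\psi\,(x)
    = \frac{d}{dx} \int_0^x \rli{0}{}{f}\psi(x-t)\,\bar\mu(t)\,dt
    = \frac{d}{dx}\int_0^x \psi(t)\,dt
    = \psi(x).
\end{equation*}

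The main technical point is justifying the associativity step $\bar\mu*(k*\psi)=(\bar\mu*k)*\psi$, because $\bar\mu$ need not be bounded near $0$ (Assumption~\ref{A1} forces $f'(0+)=\infty$). This is handled by Tonelli applied to the nonnegative integrand $\bar\mu(x-u)\,k(u-t)\,|\psi(t)|$ on the bounded triangle $\{0<t<u<x\}$, together with the observation (from $\int_{(0,1)}\min\{1,y\}\,\mu(dy)<\infty$) that $\bar\mu\in L^1_{\mathrm{loc}}[0,\infty)$; the finiteness of the resulting triple integral, for $x\in(0,T)$ fixed, then follows from $\bar\mu*k\equiv 1$ because the integral over the triangle reduces to $\int_0^x |\psi(t)|\,dt<\infty$. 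Once this is in place, the rest of the argument is mechanical.
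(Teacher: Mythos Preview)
Your argument is correct and follows the same route as the paper: both compute $\bar\mu*(k*\psi)=(\bar\mu*k)*\psi=\int_0^{\cdot}\psi$ from the Sonine identity and then read off $C^1$-regularity and the left-inverse property, and your Tonelli justification for the associativity step is in fact more explicit than what the paper writes down. The one place you are lighter than the paper is the continuity of $k*\psi$ on $(0,T)$: since both $k$ and $\psi$ may blow up at $0$, a single dominated-convergence majorant is not available as $x$ varies (the singularity of $k(x-\cdot)$ moves with $x$, while near $s=0$ only the $L^1$ bound on $\psi$ is available), so the paper supplies an explicit three-term splitting of the convolution integral; your ingredients are the right ones, but the split should be made explicit.
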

\begin{proof} 
	We need to show $\rli{0}{}{f} \psi  \in C(0, T)\cap L^1(0,T)$ and $\bar{\mu}*\left[\rli{0}{}{f}\psi\right]\in C^1(0,T)$. Since  $\psi$ and $k$ are in $C(0,T)\cap L^1(0,T)$, $\rli{0}{}{f}\psi(x)$ is well-defined and finite for all $x\in (0,T)$. To show continuity, we fix $T> T_0 > 0$ and $x\in (T_0,T)$. For $\epsilon \in (0, T_0/4)$ we have
\begin{align*}
	&\left| \rli{0}{}{f}\psi(x-\epsilon) - \rli{0}{}{f}\psi(x) \right|\\
	&\quad= \left|\int_{x-\epsilon}^x \psi(s)k(x-s)\,ds 
		+ \int_{0}^{x-\epsilon} \psi(s) \left(k(x-s)-k(x-\epsilon-s)\right) ds\right| \\
	&\quad\leq \int_{x-\epsilon}^x \left| \psi(s) k(x-s) \right| ds 
	+ \int_{0}^{x/2} \left|\psi(s)\left(k(x-s)-k(x-\epsilon-s)\right)\right| ds\\
	&\qquad\mbox{} + \int_{x/2}^{x-\epsilon} \left|\psi(s)\left(k(x-s)-k(x-\epsilon-s)\right)\right| ds\\
	&\quad\leq \|\psi\|_{C(T_0-\epsilon, T)} \int_{x-\epsilon}^x k(x-s) \,ds
	+ \left\|k(\cdot)-k(\cdot-\epsilon)\right\|_{C(T_0/2,T)}\int_0^T |\psi(s)|\,ds\\
	&\qquad\mbox{}+ \|\psi\|_{C(T_0/2,T)} \int_0^T \left|k(\epsilon+s)-k(s)\right| ds.
\end{align*}
By the continuity of $k$ on $(T_0/4,T)$, and the fact that $k\in L^1(0,T)$, we see that the right-hand side tends to $0$ as $\epsilon\to 0$. Together with a similar calculation with $x$ and $x+\epsilon$ we get $\rli{0}{}{f}\psi\in C(T_0,T)$ for all $T_0>0$, so we have $\rli{0}{}{f}\psi\in C(0,T)$. The integrability of $\rli{0}{}{f}\psi$ follows from
\begin{equation}\label{bd-e38}\begin{aligned}
	\int_0^T \left|\rli{0}{}{f}\psi(x)\right| dx 
	&\leq \int_0^T \int_0^x \left|\psi(r)\right| k(x-r)\,dr\,dx\\
	&=\int_0^T \left|\psi(r)\right| \int_r^T  k(x-r) \,dx \,dr\\
	&\leq \int_0^T  k(r) \,dr \int_0^T \left|\psi(r)\right| dr
	<\infty.
\end{aligned}\end{equation}
As $\rli{0}{}{f}\psi \in C(0, T)\cap L^1(0, T)$, $\bar{\mu}*\rli{0}{}{f}\psi$ is well-defined on $(0, T)$. We calculate for $x\in (0, T)$,
\begin{align*}
	\bar{\mu}*\rli{0}{}{f}\psi(x)
	&=\int_0^x\bar{\mu}(x-r)\rli{0}{}{f}\psi(r)\,dr \\
	&=\int_0^x\psi(s)\int_s^x \bar{\mu}(x-r)k(r-s)\,dr\,ds\\
	&=\int_0^x\psi(s)\,ds,
\end{align*}
which proves $\bar{\mu}*\left(\rli{0}{}{f}\psi\right) \in C^1(0,T)$ and $\frac{d}{dx}\left(\bar{\mu}*\rli{0}{}{f}\psi\right)\equiv\psi$ on $(0,T)$.  Because of \eqref{bd-e26} we have that $ \rld{0}{}{f}$ is the left-inverse of $\rli{0}{}{f}$.
\end{proof}
As a consequence, we obtain the following theorem.
\begin{theorem} \label{bd-19}
	Let $f$ be a Bernstein function satisfying Assumptions~\ref{A1} and~\ref{A2}. 
	\begin{enumerate}[label=\upshape\alph*),ref=\upshape\alph*),leftmargin=*]\itemsep=3pt
	\item\label{bd-19-a}
	Let $\psi \in C(0, T)\cap L^1(0, T)$ for some $T>0$. The following assertions are equivalent:
	\begin{enumerate}[label=\upshape(\roman*),ref=\upshape(\roman*)]\itemsep=3pt
	\item \label{bd-19-i}
		$\rli{0}{}{f}\psi = \phi$.
	\item\label{bd-19-ii}
		$\rld{0}{}{f}\phi = \psi$ for some $\phi\in C_{\bar{\mu}}(0,T)$ such that $\lim_{x\to 0} (\bar{\mu}*\phi)(x)=0$.
	\end{enumerate}
	If $\phi  \in C_{\bar{\mu}}[0, T)$ and $\rld{0}{}{f}\phi=0$, then $\phi=0$.
	
	\item\label{bd-19-b}
	Assume that \ref{bd-19-i} or \ref{bd-19-ii} hold for all $T>0$, 
	and $\sup_{t\geq 1} \left|\phi(t) e^{-\epsilon t}\right|\leq c_\epsilon<\infty$ for every $\epsilon>0$, then \ref{bd-19-i}, \ref{bd-19-ii} are also equivalent to
	\begin{enumerate}[label=\upshape(\roman*),ref=\upshape(\roman*)]\setcounter{enumii}{2}\itemsep=3pt
	\item\label{bd-19-iii}
		$\Lscr[\rld{0}{}{f}\phi;\lambda] = f(\lambda)\Lscr[\phi;\lambda]$ for all $\lambda > 0$.
	\end{enumerate}
	\end{enumerate}
\end{theorem}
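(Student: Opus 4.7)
My plan rests on two ingredients already in hand: the representation \eqref{bd-e26}, $\rld{0}{}{f}\phi = \frac{d}{dx}(\bar\mu * \phi)$, and the Sonine identity $k * \bar\mu \equiv \I$ from Lemma~\ref{bd-11}. Implication \ref{bd-19-i}$\Rightarrow$\ref{bd-19-ii} is essentially Theorem~\ref{bd-17}; the additional boundary condition $\lim_{x\to 0}(\bar\mu*\phi)(x) = 0$ is read off the explicit formula $\bar\mu * \rli{0}{}{f}\psi(x) = \int_0^x \psi(s)\,ds$ derived in the middle of that proof. For \ref{bd-19-ii}$\Rightarrow$\ref{bd-19-i}, I would integrate the identity $\psi = \frac{d}{dx}(\bar\mu*\phi)$ in $x$, use the boundary condition to kill the constant of integration, and arrive at $\bar\mu*\phi = \I * \psi$. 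Convolving both sides with $k$ and invoking associativity together with $k * \bar\mu = \I$ yields $\I * \phi = \I * (k * \psi) = \I * \rli{0}{}{f}\psi$; differentiating in $x$ and appealing to continuity of both sides gives $\phi = \rli{0}{}{f}\psi$.

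\textbf{Uniqueness in (a).} If $\phi \in C_{\bar\mu}[0,T)$ with $\rld{0}{}{f}\phi = 0$, then $\bar\mu*\phi$ is constant. Since $\phi$ is bounded on every compact subinterval of $[0,T)$ by continuity, and $\bar\mu$ is locally integrable (a standard property of any L\'evy tail: $\int_0^1 \bar\mu(r)\,dr = \int_{(0,1)} s\,\mu(ds) + \mu[1,\infty) < \infty$), I would bound $|(\bar\mu*\phi)(x)| \leq \|\phi\|_{C[0,x]}\int_0^x \bar\mu(r)\,dr \to 0$ as $x\downarrow 0$, so the constant is $0$. The already established \ref{bd-19-ii}$\Rightarrow$\ref{bd-19-i} applied to $\psi = 0$ then forces $\phi \equiv 0$.

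\textbf{Part (b).} The Laplace transform identities $\Lscr[\bar\mu;\lambda] = f(\lambda)/\lambda$ (immediate from \eqref{intro-e04} with $a=b=0$) and $\Lscr[k;\lambda] = 1/f(\lambda)$ (Lemma~\ref{bd-11}) are the central computational tools. The growth condition guarantees that $\Lscr[\phi;\lambda]$ converges absolutely for every $\lambda>0$. For \ref{bd-19-ii}$\Rightarrow$\ref{bd-19-iii}, the non-negativity of $\bar\mu$ makes Fubini--Tonelli immediate, giving $\Lscr[\bar\mu*\phi;\lambda] = \Lscr[\bar\mu;\lambda]\Lscr[\phi;\lambda] = (f(\lambda)/\lambda)\Lscr[\phi;\lambda]$; combining with $(\bar\mu*\phi)(0+) = 0$ and the differentiation rule for Laplace transforms produces $\Lscr[\rld{0}{}{f}\phi;\lambda] = f(\lambda)\Lscr[\phi;\lambda]$. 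For \ref{bd-19-iii}$\Rightarrow$\ref{bd-19-i}, I would set $\psi := \rld{0}{}{f}\phi$ and compute $\Lscr[\rli{0}{}{f}\psi;\lambda] = \Lscr[k;\lambda]\Lscr[\psi;\lambda] = (1/f(\lambda)) \cdot f(\lambda) \Lscr[\phi;\lambda] = \Lscr[\phi;\lambda]$; Lerch's uniqueness theorem for Laplace transforms of continuous functions of exponential order then delivers $\rli{0}{}{f}\psi = \phi$.

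\textbf{Expected obstacle.} The main technical point is the careful bookkeeping of Fubini--Tonelli and the absolute convergence of the Laplace integrals. Since $\bar\mu$ and $k$ are non-negative (in fact completely monotone), the swaps themselves are painless; the subtlety is to combine the growth hypothesis on $\phi$ with $\Lscr[k] = 1/f$ and $\Lscr[\bar\mu] = f/\lambda$ to ensure finiteness of the relevant products. A secondary but easily missed point is the vanishing of $(\bar\mu*\phi)(0+)$ in the uniqueness step, which relies on the local integrability of $\bar\mu$ that is built into the defining condition on L\'evy measures.
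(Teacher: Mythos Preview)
Your plan tracks the paper's proof closely. Part (a) is essentially identical: the paper handles \ref{bd-19-ii}$\Rightarrow$\ref{bd-19-i} by first subtracting $\rli{0}{}{f}\psi$ and showing $\bar\mu*(\phi-\rli{0}{}{f}\psi)\equiv 0$, then convolving with $k$; your direct route (integrate, convolve with $k$, use $k*\bar\mu=\I$, differentiate) is the same Sonine-pair manipulation in a slightly different order and is arguably cleaner.

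The one place you are too quick is \ref{bd-19-ii}$\Rightarrow$\ref{bd-19-iii} in part (b). The differentiation rule $\Lscr[g';\lambda]=\lambda\Lscr[g;\lambda]-g(0+)$ for $g=\bar\mu*\phi$ requires not only $g(0+)=0$ but also the boundary term at infinity, $e^{-\lambda T}(\bar\mu*\phi)(T)\to 0$ as $T\to\infty$. This is precisely where the growth hypothesis on $\phi$ does its real work, and the paper spends the bulk of the proof on this estimate: one bounds $e^{-\lambda T}\int_0^T\phi(T-t)\bar\mu(t)\,dt$ by taking $|\phi(t)|\leq C_\lambda e^{\lambda t/2}$ and using $\int_0^\infty e^{-\lambda t/2}\bar\mu(t)\,dt=\tfrac{2}{\lambda}f(\lambda/2)<\infty$. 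Your ``expected obstacle'' paragraph names Fubini--Tonelli as the subtlety, but Fubini is painless here since $\bar\mu\geq 0$; the genuine technical content is this boundary term at infinity, which you have not addressed.
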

\begin{proof}
\ref{bd-19-a}\ \ 
	\ref{bd-19-i}$\Rightarrow$\ref{bd-19-ii}: From Theorem \ref{bd-17} we see that $\phi = \rli{0}{}{f}\psi\in C_{\bar{\mu}}(0,T)$ and $\rld{0}{}{f}\phi=\psi$. Moreover, using \eqref{bd-e26},
	\begin{align*}
		\bar{\mu}\ast\phi(x)
		= \bar{\mu}\ast \rli{0}{}{f}\psi(x)
		= \int_0^x\psi(z)\,dz
		\xrightarrow[x\to 0]{} 0.
	\end{align*} 
	
	\smallskip\noindent
	\ref{bd-19-ii}$\Rightarrow$\ref{bd-19-i}:
	Using the results of Theorem \ref{bd-17}, $ \rld{0}{}{f}$ is the  left inverse of $\rli{0}{}{f}$. Thus, 
	\begin{gather*}
		\rld{0}{}{f}\phi(x) = \psi(x) = \rld{0}{}{f}\!\rli{0}{}{f}\psi(x).
	\end{gather*}  
	Consequently,  $\rld{0}{}{f}\left(\phi-\rli{0}{}{f}\psi\right)=0$ and, in view of \eqref{bd-e26},  $\bar{\mu}*\left(\phi-\rli{0}{}{f}\psi\right)$ is constant in $(0,T)$.  

	The left-inverse property and \eqref{bd-e26} show $\lim_{x\to 0}\bar\mu*\rli{0}{}{f}\psi(x) = \lim_{x\to 0}\int_0^x \psi(z)\,dz = 0$ and, by assumption,  $\lim_{x\to0} (\bar{\mu}*\phi)(x)=0$. Therefore, $\bar{\mu}*\left(\phi-\rli{0}{}{f}\psi\right) \equiv 0$.
	
	Using once again Theorem \ref{bd-17} and \eqref{bd-e26}, we get 
	\begin{align*}
		\phi-\rli{0}{}{f}\psi
		&=\rld{0}{}{f}\rli{0}{}{f}\left[\phi-\rli{0}{}{f}\psi\right]\\
		&=\frac{d}{dx}\left[\bar\mu*\left(\rli{0}{}{f}\left(\phi-\rli{0}{}{f}\psi\right)\right)\right]\\
		&=\frac{d}{dx}\left[k*\left(\bar\mu*\left(\phi-\rli{0}{}{f}\psi\right)\right)\right]\\
		&=\frac{d}{dx}[k*0]
		\:=\:0,
	\end{align*}
	If $\phi\in C_{\bar{\mu}}[0, T)$, then $\left|\bar{\mu}*\phi(x)\right| \leq \|\phi\|_{C[0,T)} \int_0^x\bar{\mu}(s)\,ds\xrightarrow[x\to 0]{} 0$.
	Applying \ref{bd-19-i} we obtain $\phi=\rli{0}{}{f}0=0$.

\medskip\noindent\ref{bd-19-b}\ \ 
	Assume now that $\phi$ does not grow exponentially as $t\to\infty$.
	
	\ref{bd-19-ii}$\Rightarrow$\ref{bd-19-iii}:
	Let \ref{bd-19-ii} hold for all $T>0$. Using \eqref{bd-e26}, integration by parts and Fubini's theorem, we get for fixed $\lambda > 0$
	\begin{align*}
		\Lscr_T\left[\rld{0}{}{f}\phi;\lambda\right]
		:=&\: \int_0^T \frac d{dx}\int_0^x \phi(x-t)\bar\mu(t)\,dt \: e^{-\lambda x}\,dx\\
		=&\:  e^{-\lambda T}\int_0^T \phi(T-t)\bar\mu(t)\,dt + \int_0^T\int_0^x \phi(x-t)\bar\mu(t)\,dt\: \lambda e^{-\lambda x}\,dx\\
		=&\:  e^{-\lambda T}\int_0^T \phi(T-t)\bar\mu(t)\,dt + \int_0^T \lambda \bar\mu(t) \int_t^T e^{-\lambda x}\phi(x-t)\,dx \,dt\\
		=&\:  e^{-\lambda T}\int_0^T \phi(T-t)\bar\mu(t)\,dt + \int_0^T \lambda e^{-\lambda t}\bar\mu(t) \int_0^{T-t}  e^{-\lambda x}\phi(x)\,dx\,dt.
	\end{align*}
	Since $\phi$ is integrable over $(0,1)$ and grows slower than $e^{\frac 12\lambda t}$ as $t\to\infty$, it is not hard to see that the Laplace transform $\int_0^{\infty} e^{-\lambda x} |\phi(x)|\,dx$ is finite.\footnote{Using Wiener's Tauberian theorem one can, with some effort, remove the exponential growth assumption on $\phi$ and show that the finiteness of the Laplace transform, $\int_0^\infty e^{-\lambda x}|\phi(x)|\,dx < \infty$, ensures that $\lim_{T\to\infty} e^{-\lambda T}\int_0^T \phi(T-t)\bar\mu(t)\,dt = 0$.} Moreover, the first integral expression
	\begin{align*}
		\left|e^{-\lambda T} \int_0^T \phi(T-t)\bar\mu(t)\,dt\right|
		&\leq C_{\lambda} e^{-\frac 12\lambda T} \int_0^\infty e^{-\frac 12 \lambda t} \bar\mu(t)\,dt\\
		&= C_{\lambda} e^{-\frac 12\lambda T} \int_0^\infty e^{-\frac 12 \lambda t} \int_{(t,\infty)}\mu(ds)\,dt\\
		&= C_{\lambda} e^{-\frac 12\lambda T} \int_{(0,\infty)} \int_0^s e^{-\frac 12 \lambda t}\,dt \,\mu(ds)\\
		&=  C_{\lambda} e^{-\frac 12\lambda T} \frac 2\lambda\int_{(0,\infty)} \left(1-e^{-\frac 12\lambda s}\right)\mu(ds)\\
		&=  C_{\lambda} e^{-\frac 12\lambda T} \frac 2\lambda f\left(\tfrac 12\lambda\right)
	\end{align*}
	tends to zero as $T\to\infty$. This allows us to let $T\to\infty$ and, with almost the same calculation as for the first integral, we get
	\begin{align*}
		\Lscr\left[\rld{0}{}{f}\phi;\lambda\right]
		= \int_0^\infty \lambda e^{-\lambda t}\bar\mu(t)\,dt \int_0^{\infty} e^{-\lambda x}\phi(x)\,dx
		= f(\lambda) \Lscr[\phi;\lambda].
	\end{align*}
	This finishes the proof of \ref{bd-19-iii}.

	Conversely, assume that \ref{bd-19-iii} holds. In order to show \ref{bd-19-ii}, we have to solve $\rld{0}{}{f}\phi = \psi$ for a given, Laplace-transformable $\psi$. Using Lemma~\ref{bd-11} we define $\phi$ by 
	\begin{gather*}
		\Lscr[\phi;\lambda] := f(\lambda)^{-1} \Lscr[\psi;\lambda] = \Lscr[k;\lambda]\Lscr[\psi;\lambda] = \Lscr[k*\psi;\lambda] = \Lscr\left[\rli{0}{}{f}\psi\right].
	\end{gather*}
	By the uniqueness of the Laplace transform we get $\phi = \rli{0}{}{f}\psi$ and we conclude just as in the proof of \ref{bd-19-i}$\Rightarrow$\ref{bd-19-ii}, that $\phi\in C_{\bar\mu}(0,T)$ and $\lim_{x\to 0}\bar\mu*\phi(x)=0$.
\end{proof}

\section{Censored initial value problem}\label{cvp}
We will now define the censored Bernstein fractional derivative, which is the primary focus of our study. Throughout this section $(\bar\mu,k)$ is a Sonine pair where $\mu$ is the jump measure of a Bernstein function $f$, which satisfies the Assumptions~\ref{A1} and~\ref{A2}.
\begin{definition}\label{cvp-03}
	The \textbf{censored Bernstein derivative} of a function $\phi\in C_{\bar{\mu}}(0,T]$ is 
	\begin{equation}\label{cvp-e02}
		\ced{0}{}{f}\phi(x)
		= \rld{0}{}{f}\phi(x)-\phi(x)\bar{\mu}(x), \quad x\in (0,T].
	\end{equation}
\end{definition}

\begin{remark}\label{cvp-05}
For $\phi \in C_{\bar{\mu}}(0,T]$ we have the following alternative representation of the censored Bernstein derivative:
\begin{align*}
	\ced{0}{}{f}\phi(x)
	=\int_{[0,x]} \left(\phi(x)-\phi(x-s)\right)\mu(ds).
\end{align*}
This representation also explains the name ``censored'' derivative: the jump measure is restricted to $[0,x]$, all values larger than $x$ are disallowed, hence, ``censored''.
\end{remark}

The censored Bernstein derivative is the generalization of the censored fractional derivative, see \cite{2021_Du}. In this section we will construct the inverse of the censored fractional derivative, which is more complicated than the inverse Bernstein derivatives in Section~\ref{bd}. For this we need a few preparations.

\begin{definition}\label{cvp-07}
Let $0<r<x<T$ and $i\in \nat$. The kernels $k_i(x, r)$ are defined by
\begin{align} \label{cvp-e04}
	k_i(x,r)
	&:= \begin{cases}
		\bar{\mu}(r)k(x-r), & i = 1;\\
		\int_r^xk_1(x,s)k_{i-1}(s, r)\,ds, &i \geq 2.
	\end{cases}
\intertext{The corresponding operators are} 
\label{cvp-e06}
\Kcal\phi(x)
&:=\begin{cases} 
	\int_0^xk_1(x,r)\phi(r)\,dr, &x>0;\\
	\phi(0),  &x=0.
\end{cases}\\
\notag
\Kcal^i\phi(x) &:= \underbrace{\Kcal\circ\dots\circ\Kcal}_{i~\text{times}} \phi(x),\quad i\geq 2.
\end{align}
\end{definition}
Observe that $\Kcal\phi|_{(0,\infty)} = \rli{0}{}{f}[\bar{\mu}\phi]|_{(0,\infty)}$ and $\Kcal^i \phi(x) = \int_0^x k_i(x,r)\phi(r)\,dr$ for $x>0$.
It is not difficult to see that $r\mapsto k_i(x,r)$ is a probability density and $\Kcal$ is a linear, positivity preserving operator, i.e.\  $\Kcal \phi\geq0$ if $\phi\geq0$. We need the operators $\Kcal^i$ to construct the inverse of $\ced{0}{}{f}$.
\begin{theorem}\label{cvp-09} 
	Let $(\bar\mu,k)$ be a positive Sonine pair and $\Lscr(\bar\mu; \lambda)=f(\lambda)/\lambda$, $\Lscr(k; \lambda)=1/f(\lambda)$, where $f$ is a Bernstein function satisfying~\ref{A1} and~\ref{A2}. If $\limsup_{x\to 0}\bar{\mu}(x)\int_0^x k(s)\,ds<1$, then there exists for every $\phi_0\in \real$, $T>0$ and $g\in C[0,T]$ a unique function $\phi \in C_{\bar{\mu}}[0,T]$ such that
	\begin{equation}\label{cvp-e08}
	\left\{\begin{aligned}
		\ced{0}{}{f}\phi(x)&= g(x), &&x>0;\\
		\phi(0)&=\phi_0.
	\end{aligned}\right.
	\end{equation}	
	The function $\phi$ is given by the following series representation 
	\begin{equation}\label{cvp-e10}
		\phi(x)-\phi_0
		= \cei{0}{}{f}g(x)
		= \sum_{i=0}^\infty \Kcal^i\left[\rli{0}{}{f} g\right](x).
	\end{equation}
\end{theorem}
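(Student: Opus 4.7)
The strategy is to recast the censored initial value problem as a Volterra-type fixed-point equation and solve it by a Neumann series generated by $\Kcal$. First, the alternative representation in Remark~\ref{cvp-05} shows $\ced{0}{}{f}c = 0$ for every constant $c$, so I would reduce to $\phi_0 = 0$ by replacing $\phi$ with $\phi-\phi_0$. Rewriting $\ced{0}{}{f}\phi = g$ as $\rld{0}{}{f}\phi = g + \bar\mu\phi$ and applying the left-inverse $\rli{0}{}{f}$ from Theorem~\ref{bd-19} (legitimate because $\phi(0)=0$ together with $\bar\mu\in L^1_{\mathrm{loc}}$ forces $\lim_{x\to 0}\bar\mu*\phi(x)=0$) yields the equivalent equation $(\mathrm{I}-\Kcal)\phi = \rli{0}{}{f}g$, where $\Kcal\phi = \rli{0}{}{f}(\bar\mu\phi)$ matches Definition~\ref{cvp-07}. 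Iterating formally produces the Neumann series \eqref{cvp-e10}.

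The heart of the argument is the convergence of this series. With $K(x):=\int_0^x k(s)\,ds$ and $\eta(r):=\bar\mu(r)K(r)$, the central pointwise estimate is
\[
    \Kcal[K](x) \;=\; \int_0^x k(x-r)\,\eta(r)\,dr \;\leq\; \Bigl(\sup_{r\in[0,x]}\eta(r)\Bigr)\,K(x).
\]
The Sonine identity $\bar\mu*k \equiv 1$ combined with the monotonicity of $\bar\mu$ already gives $\eta \leq 1$ on all of $(0,\infty)$, and the hypothesis supplies $T_0>0$ and $q<1$ with $\eta\leq q$ on $[0,T_0]$. Induction then yields $\Kcal^i[K](x) \leq q^i K(x)$ for $x\in[0,T_0]$. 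Since $|\rli{0}{}{f}g(x)|\leq \|g\|_\infty K(x)$, the series is dominated by a geometric one and converges absolutely and uniformly on $[0,T_0]$ to a continuous $\phi$ with $\phi(0)=0$ and $|\phi(x)|\leq \|g\|_\infty K(x)/(1-q)$.

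To reach arbitrary $T$, I would iterate the construction on a finite partition $0 = s_0 < s_1 < \dots < s_N = T$: on each slab $[s_j, s_{j+1}]$ the censored equation becomes a Volterra equation in which the already-constructed values on $[0,s_j]$ enter as a known forcing term, and a localised version of the same contraction estimate, with slabs chosen fine enough, provides the extension. Once $\phi$ is defined on $[0,T]$, termwise application of Theorem~\ref{bd-17} secures $\phi\in C_{\bar\mu}[0,T]$, and the fixed-point identity $(\mathrm{I}-\Kcal)\phi = \rli{0}{}{f}g$ translates back into $\ced{0}{}{f}\phi = g$ via the left-inverse property of Theorem~\ref{bd-19}. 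Uniqueness follows because a difference $\psi$ of two solutions satisfies $\psi = \Kcal\psi$ with $\psi(0)=0$; iterating the core bound gives $|\psi|\leq Cq^nK\to 0$ on $[0,T_0]$, and the same continuation step propagates $\psi\equiv 0$ to $[0,T]$. The principal obstacle is exactly this promotion from local to global: the hypothesis furnishes strict inequality $\eta<1$ only near the origin, whereas $\eta$ may approach $1$ farther out, so a uniform geometric decay of $\Kcal^i$ across $[0,T]$ cannot be read off in one stroke and the careful slab-by-slab continuation, together with the verification that $\bar\mu*\phi\in C^1$ at the junction points, is where the real work lies.
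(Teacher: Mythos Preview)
Your overall strategy---recasting the problem as the fixed-point equation $(\mathrm{I}-\Kcal)\phi = \rli{0}{}{f}g$ and inverting by a Neumann series---is exactly the paper's approach. The difference is that the obstacle you flag as ``principal'' is illusory: the paper's Lemma~\ref{cvp-11} shows that $\limsup_{x\to 0}\eta(x)<1$ already forces $q:=\sup_{x\in[0,T]}\eta(x)<1$ for \emph{every} $T>0$. The argument is short. The Sonine identity and the monotonicity of $\bar\mu$ give $\eta(x)=\bar\mu(x)\int_0^x k(x-s)\,ds\le\int_0^x\bar\mu(s)k(x-s)\,ds=1$ everywhere; if the supremum $1$ were attained at some $x_0>0$, equality here would force $\bar\mu$ to be constant on $[0,x_0]$, and then the Sonine equation would make $K$ constant on $[0,x_0]$, hence $k\equiv 0$ there, contradicting $\bar\mu*k\equiv 1$. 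So $q=1$ can only happen if $\eta$ approaches $1$ at the origin, which is precisely what the hypothesis excludes. With $q<1$ global, the geometric bound $\bigl|\Kcal^i[\rli{0}{}{f}g]\bigr|\le\|g\|_\infty\, q^iK$ (Lemma~\ref{cvp-17}, Corollary~\ref{cvp-19}) holds on all of $[0,T]$ at once, and no slab-by-slab continuation is needed.

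There is also a small gap in your uniqueness argument. To iterate to $|\psi|\le Cq^nK$ you need a starting bound $|\psi(x)|\le CK(x)$, and this does not follow from $\psi\in C_{\bar\mu}[0,T]$ with $\psi(0)=0$ alone: it would require $\bar\mu\psi$ bounded near the origin, i.e.\ $\psi(x)=O(1/\bar\mu(x))$, a rate that is not automatic. Since $r\mapsto k_1(x,r)$ is a probability density, direct iteration of $\psi=\Kcal^n\psi$ only reproduces $|\psi|\le\|\psi\|_\infty$. The paper avoids this with a maximum-principle argument: pick $\xi\in\argmax_{[0,T]}|\psi|$ (say $\psi(\xi)\ge 0$), and observe that $\psi=\Kcal\psi$ gives $\int_0^\xi(\psi(\xi)-\psi(r))\,\bar\mu(r)k(\xi-r)\,dr=0$ with a nonnegative integrand, forcing $\psi\equiv\psi(\xi)$ on $[0,\xi]$ and hence $\psi(\xi)=\psi(0)=0$.
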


For the proof of Theorem \ref{cvp-09} we need a few lemmas. Unless otherwise mentioned, $(\bar\mu,k)$ and $f$ are as in the statement of Theorem~\ref{cvp-09}.
\begin{lemma}\label{cvp-11} 
	Let $T>0$. The limit $q'=\limsup_{x\to 0}\bar{\mu}(x)\int_0^x k(s)\,ds=1$ if, and only if, the supremum $q=\sup_{x\in [0,T]}\bar{\mu}(x)\int_0^x k(s)\,ds=1$.
\end{lemma}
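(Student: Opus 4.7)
The plan is to work with the shorthand $h(x) := \bar\mu(x)\int_0^x k(s)\,ds$, so that the claim reduces to the equivalence $\limsup_{x\to 0} h(x) = 1 \iff \sup_{x\in[0,T]} h(x) = 1$. The forward implication is immediate: one always has $q \geq q'$ (the supremum over $[0,T]$ dominates any upper limit at an accumulation point of the domain), and the universal bound $h \leq 1$ (established next) gives $q \leq 1$, so $q'=1$ forces $q=1$.

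The main tool is the Sonine identity $\bar\mu \ast k \equiv 1$ from Lemma~\ref{bd-11}\ref{bd-11-a}, which I would rewrite as
\[
1 - h(x) = \int_0^x \bigl(\bar\mu(x-s) - \bar\mu(x)\bigr)\,k(s)\,ds.
\]
Since $\bar\mu$ is completely monotone by Lemma~\ref{bd-11}\ref{bd-11-c}, in particular decreasing, the integrand is nonnegative, giving $h \leq 1$ pointwise. I would then upgrade this to the strict inequality $h(x) < 1$ for every $x > 0$: writing $\bar\mu$ via Bernstein's theorem as the Laplace transform of a measure $\nu$ on $[0,\infty)$ with $\nu(\{0\}) = 0$ (forced by $\bar\mu(\infty)=0$ under~\ref{A1}) shows $\bar\mu$ is strictly decreasing, and $k$ is likewise completely monotone and strictly positive (a completely monotone function with a zero vanishes identically); hence the integrand is strictly positive on $(0,x)$.

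For the nontrivial converse $q=1 \Rightarrow q'=1$, I argue by contraposition. Assuming $q' < 1$, fix $\delta > 0$ and $x_0 \in (0,T]$ with $h(x) \leq 1-\delta$ on $(0,x_0]$. Since $\bar\mu, k \in C(0,\infty)$ (for $k$, Lemma~\ref{bd-11}\ref{bd-11-a}), $h$ is continuous on the compact interval $[x_0,T]$ and attains its maximum at some $x^\star \in [x_0,T]$, where $h(x^\star) < 1$ by the strict bound above. Combining the two estimates yields $q \leq \max\{1-\delta,\, h(x^\star)\} < 1$, contradicting $q=1$.

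The main technical point, and essentially the only place the complete-monotonicity structure enters substantively, is the strict inequality $h(x) < 1$: without it, the continuous maximum on the compact piece $[x_0,T]$ could \emph{a priori} equal $1$, breaking the contradiction. The rest is the Sonine identity combined with a routine compactness and continuity argument.
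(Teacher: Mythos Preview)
Your proof is correct and follows essentially the same route as the paper: both show $h\le 1$ via the Sonine identity and monotonicity of $\bar\mu$, establish the strict inequality $h(x)<1$ for every $x>0$, and then use continuity on a compact subinterval to finish. The only cosmetic difference is in how the strict inequality is obtained---you invoke Bernstein's theorem to get strict monotonicity of $\bar\mu$ directly, whereas the paper argues by contradiction that $h(x_0)=1$ would force $\bar\mu$ to be constant on $[0,x_0]$ and hence (plugging back into the Sonine equation) $k\equiv 0$ there.
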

\begin{proof} 
	Since $\bar\mu$ is decreasing, we have
	\begin{align*}
		\bar{\mu}(x)\int_0^x k(s)\,ds
		=\bar{\mu}(x)\int_0^x k(x-s)\,ds
		\leq \int_0^x \bar{\mu}(s)k(x-s)\,ds
		=1.
	\end{align*}
	This shows that $q'\leq q\leq 1$, hence $q'=1$ implies $q=1$. Assume now that $q=1$ and that the supremum is attained at some $x_0\in (0,T]$, i.e.\ $\bar{\mu}(x_0)\int_0^{x_0} k(s)\,ds=1$. Since  $\int_0^{x_0}\bar{\mu}(s) k(x_0-s)\,ds=1$, we have 
	\begin{align*}
	 	0
	 	&=\bar{\mu}(x_0)\int_0^{x_0} k(s)\,ds-\int_0^{x_0}\bar{\mu}(s) k(x_0-s)\,ds\\
		&= \int_0^{x_0} \bar{\mu}(x_0)k(x_0-s)\,ds-\int_0^{x_0}\bar{\mu}(s) k(x_0-s)\,ds\\
	 	&= \int_0^{x_0} (\bar{\mu}(x_0)-\bar{\mu}(s)) k(x_0-s)\,ds.
	\end{align*}
	This  means that  $\bar{\mu}|_{[0,x_0]}$ is constant, since $\bar\mu$ is decreasing. From the Sonine equation we see that $\int_0^xk(s)\,ds$ is constant on $[0, x_0]$.  Therefore,  $k=0$ on $[0,x_0]$. This is a contradiction to $(\bar{\mu}, k)$ being a Sonine pair. Consequently, $\lim_{\epsilon\to 0} \sup_{x\in [\epsilon, T]} \bar\mu(x)\int_0^x k(x-s)\,ds = 1$.
\end{proof}

\begin{lemma}\label{cvp-13} 
	For all $\phi\in C[0,T]$ the inequality $\left|\rli{0}{}{f}\phi(x)\right|\leq\|\phi\|_{C[0,T]}\int_0^x k(s)\,ds$ holds; in particular, $\rli{0}{}{f}$ maps $C[0,T]$ into $C[0,T]$.
\end{lemma}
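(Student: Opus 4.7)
The plan is to split the lemma into its two claims and handle each with elementary arguments already supported by the material in Section~\ref{bd}.

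The norm bound is immediate. By Definition~\ref{bd-13}, $\rli{0}{}{f}\phi(x)=\int_0^x \phi(x-z)k(z)\,dz$, and since $k\geq 0$ (it is completely monotone by Lemma~\ref{bd-11}\ref{bd-11-c}), the triangle inequality gives
\begin{align*}
	\left|\rli{0}{}{f}\phi(x)\right|
	\leq \int_0^x |\phi(x-z)|\, k(z)\,dz
	\leq \|\phi\|_{C[0,T]} \int_0^x k(s)\,ds,
\end{align*}
which is exactly the stated inequality. Note that $\int_0^x k(s)\,ds<\infty$ because $k\in L^1_{\mathrm{loc}}[0,\infty)$ by the very definition of a Sonine pair (Definition~\ref{bd-09}).

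For the mapping property, I would argue continuity in three stages. First, on the open interval $(0,T)$, continuity of $\rli{0}{}{f}\phi$ is already delivered by Theorem~\ref{bd-17}, because $\phi\in C[0,T]\subset C(0,T)\cap L^1(0,T)$, and hence $\rli{0}{}{f}\phi\in C_{\bar{\mu}}(0,T)\subset C(0,T)$. Second, at the left endpoint $x=0$, the norm inequality gives $\rli{0}{}{f}\phi(x)\to 0$ as $x\to 0+$ since $\int_0^x k(s)\,ds\to 0$ by local integrability of $k$; setting $\rli{0}{}{f}\phi(0):=0$ (which is the natural value of the empty integral), continuity at $0$ is established.

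Third, continuity at the right endpoint $x=T$ can be obtained by the same splitting trick used in the proof of Theorem~\ref{bd-17}: for $0<\epsilon<T$ small, write
\begin{align*}
	\rli{0}{}{f}\phi(T)-\rli{0}{}{f}\phi(T-\epsilon)
	= \int_{T-\epsilon}^T \phi(T-z)k(z)\,dz
	+ \int_0^{T-\epsilon} \bigl(\phi(T-z)-\phi(T-\epsilon-z)\bigr)k(z)\,dz,
\end{align*}
and estimate the first term by $\|\phi\|_{C[0,T]}\int_{T-\epsilon}^T k(s)\,ds$ (which vanishes by absolute continuity of the Lebesgue integral of $k\in L^1_{\mathrm{loc}}$) and the second by the uniform continuity of $\phi$ on $[0,T]$ times $\int_0^T k(s)\,ds$. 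Both terms tend to $0$ as $\epsilon\downarrow 0$. No step here presents a real obstacle; the lemma is a preparatory bound supplying the contraction-type control needed later for the Neumann series~\eqref{cvp-e10} inverting $\ced{0}{}{f}$ in Theorem~\ref{cvp-09}.
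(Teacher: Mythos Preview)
Your proof is correct and follows essentially the same approach as the paper: establish the norm bound by the triangle inequality, use it to get continuity at $x=0$, and invoke Theorem~\ref{bd-17} for continuity on $(0,T)$. Your additional third stage, treating continuity at the right endpoint $x=T$ explicitly via the splitting argument, is in fact more careful than the paper's own proof, which simply cites Theorem~\ref{bd-17} (stated only for the open interval $(0,T)$) without addressing $x=T$; the paper presumably has in mind either the tacit extension of $\phi$ to a slightly larger interval or exactly the estimate you wrote down.
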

\begin{proof}
	We have
	\begin{gather*}
		\left|\rli{0}{}{f}\phi(x)\right|
		=\left|\int_0^x \phi(s) k(x-s)\,ds\right|
		\leq \|\phi\|_{C[0,T]}\int_0^x k(s)\,ds.
	\end{gather*}
	This shows that $\rli{0}{}{f}\phi$ is continuous at $x=0$. From Theorem~\ref{bd-17} we conclude that $\rli{0}{}{f}\phi\in C[0,T]$.	
\end{proof}

\begin{lemma}\label{cvp-15}
	The operator $\Kcal$ maps $C[0,T]$ into $C[0,T]$.
\end{lemma}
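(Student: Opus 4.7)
The plan is to exploit the identity
\[
\Kcal\phi(x) = \rli{0}{}{f}[\bar\mu\,\phi](x),\quad x>0,
\]
already noted after Definition~\ref{cvp-07}, which reduces the claim to understanding the Bernstein--Riemann--Liouville integral applied to the product $\bar\mu\,\phi$, plus a separate (easy) argument at the endpoint $x=0$.

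First I would verify that $\bar\mu\,\phi\in C(0,T)\cap L^1(0,T)$. Since $f\in\CBF$, the L\'evy density $m$ is completely monotone, hence continuous on $(0,\infty)$, so $\bar\mu(x)=\int_x^\infty m(s)\,ds$ is continuous on $(0,\infty)$; multiplying by the bounded continuous $\phi$ preserves continuity on $(0,T]$. Local integrability follows from Fubini,
\[
\int_0^T \bar\mu(r)\,dr = \int_{(0,\infty)}\min(s,T)\,\mu(ds)<\infty,
\]
which is finite because $\mu$ is a L\'evy measure, and boundedness of $\phi$ then gives $\bar\mu\,\phi\in L^1(0,T)$.

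For continuity on the open interval $(0,T)$ I would apply Theorem~\ref{bd-17} directly to $\psi:=\bar\mu\,\phi$, obtaining $\rli{0}{}{f}[\bar\mu\,\phi]\in C_{\bar\mu}(0,T)\subseteq C(0,T)$. To get continuity at the right endpoint $x=T$, I would extend $\phi$ to $[0,T+1]$ by the constant value $\phi(T)$ beyond $T$ and rerun the same argument on the enlarged interval; since the resulting integral coincides with $\Kcal\phi(x)$ for $x\in(0,T]$, continuity at $T$ follows.

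Finally, continuity at $x=0$ with the prescribed value $\Kcal\phi(0)=\phi(0)$ is handled using the Sonine identity $\int_0^x \bar\mu(r)\,k(x-r)\,dr\equiv 1$: writing
\[
\Kcal\phi(x)-\phi(0)=\int_0^x \bar\mu(r)\,k(x-r)\,\bigl(\phi(r)-\phi(0)\bigr)\,dr,
\]
the right-hand side is bounded in absolute value by $\sup_{0\leq r\leq x}|\phi(r)-\phi(0)|$, which tends to $0$ as $x\downarrow 0$ by continuity of $\phi$ at the origin. The only conceptual obstacle is that $\bar\mu$ itself is typically unbounded near $0$, so $\bar\mu\,\phi$ does not belong to $C[0,T]$ and Lemma~\ref{cvp-13} cannot be quoted wholesale; the Sonine equation is precisely what absorbs this singularity and pins down the correct boundary value at $x=0$.
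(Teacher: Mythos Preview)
Your proposal is correct. The argument for continuity at $x=0$ is identical to the paper's. For continuity on the interior and at the right endpoint, however, you take a different route: rather than repeating a hands-on estimate as the paper does (splitting the integral at $y-\epsilon$ and using monotonicity of $k$ to control the pieces), you recognise that $\Kcal\phi|_{(0,T)}=\rli{0}{}{f}[\bar\mu\phi]$ with $\bar\mu\phi\in C(0,T)\cap L^1(0,T)$ and invoke Theorem~\ref{bd-17} wholesale, then handle the right endpoint by an innocuous constant extension. This is cleaner and avoids duplicating work already done in Theorem~\ref{bd-17}; the paper's explicit estimate, on the other hand, is self-contained and makes the modulus of continuity visible. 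Both are valid; your version is the more economical.
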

\begin{proof}
	Let $\phi\in C[0,T]$ and $x>0$. We have
	\begin{align*}
	|\Kcal\phi(x)-\phi(0)|
	&=\left|\int_0^x\bar{\mu}(r) k(x-r) \left(\phi(r)-\phi(0)\right) dr\right|\\
	&= \int_0^x\bar{\mu}(r) k(x-r) \,dr \sup_{r\in [0,x]}\left|\phi(r)-\phi(0)\right|
	= \sup_{r\in [0,x]}\left|\phi(r)-\phi(0)\right|,
\end{align*}
which proves that $\lim_{x\to 0}\Kcal\phi(x)=\Kcal\phi(0)=\phi(0)$, i.e.\ $x\mapsto \Kcal\phi(x)$ is continuous at $x=0$.

Fix $\delta>0$ and let $x,y\in [\delta, T]$ and $\epsilon<\delta$. Without loss of generality we assume that $x>y$. Then
\begin{align*}
	&|\Kcal\phi(x)-\Kcal\phi(y)|
	= \left|\int_0^x\bar{\mu}(r) k(x-r) \phi(r)\,dr-\int_0^y \bar{\mu}(r) k(y-r)\phi(r)\,dr\right|\\
	&\quad\leq \left|\int_0^y\bar{\mu}(r) k(x-r) \phi(r)\,dr - \int_0^y \bar{\mu}(r) k(y-r)\phi(r)\,dr\right|
	+ \left|\int_y^x\bar{\mu}(r) k(x-r) \phi(r)\,dr\right|\\
	&\quad\leq \int_0^y\bar{\mu}(r) |k(x-r)-k(y-r)| |\phi(r)|\,dr+\bar{\mu}(y) \|\phi\|_{\infty} \int_0^{x-y}k(r) \,dr\\
	&\quad\leq 2\|\phi\|_{C[0,T]} \bar{\mu}(y-\epsilon)\int_{y-\epsilon}^y k(y-r)dr+\|\phi\|_{C[0,T]}\int_0^{y-\epsilon}\bar{\mu}(r) |k(x-r)-k(y-r)| dr\\
	&\qquad\mbox{}+\bar{\mu}(y)\|\phi\|_{C[0,T]}\int_0^{x-y}k(r) \,dr\\
	&\quad\leq\|\phi\|_{C[0,T]}\bar{\mu}(\delta-\epsilon)\left(3\int_{0}^{(x-y)\vee \epsilon} k(r)dr+\int_0^{y-\epsilon}\bar{\mu}(r) |k(x-r)-k(y-r)| dr\right)
\end{align*}
The first integral tends to $\int_{0}^{\epsilon} k(r)\,dr$ as $x-y\to 0$. Using dominated convergence with the integrable majorant $2\mu(\cdot)k(y_0-\cdot)$ (notice that $k$ is decreasing and $y_0<y<x$), we see that the second integral vanishes as $x-y\to 0$. Finally, using that $k$ is locally integrable, the whole expression tends to $0$ as $\epsilon\to 0$ and the continuity is proven.
\end{proof}

\begin{lemma}\label{cvp-17}
	Let $q := \sup_{x\in [0,T]}\bar{\mu}(x)\int_0^x k(s)\,ds$. For every $\phi \in C[0,T]$ satisfying $|\phi(x)|\leq M\int_0^x k(s)\,ds$ for all $x\in [0,T]$ and some constant $M>0$ one has $\Kcal\phi(x)\leq M q \int_0^x k(s)\,ds$ for all $x\in [0,T]$. Furthermore, $|\Kcal^i \phi(x)|\leq M q^i \int_0^x k(s)\,ds$, $x\in [0,T]$.
\end{lemma}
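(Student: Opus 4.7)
The plan is to exploit the elementary factorisation $\bar\mu(r)\,k(x-r)\,K(r) = [\bar\mu(r)K(r)]\cdot k(x-r)$, where I abbreviate $K(r):=\int_0^r k(s)\,ds$, and to observe that the bracketed factor is precisely what $q = \sup_{x\in[0,T]}\bar\mu(x)K(x)$ is designed to control. Recalling from Definition~\ref{cvp-07} that $\Kcal\phi(x)=\int_0^x\bar\mu(r)k(x-r)\phi(r)\,dr$ for $x>0$, the first claim reduces to a single integral estimate, and the iterated bound will then follow by induction on $i$.

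For the single-application statement I would insert the hypothesis $|\phi(r)|\leq M K(r)$ into $\Kcal\phi$ and regroup the integrand:
\[
 |\Kcal\phi(x)|\leq M\int_0^x [\bar\mu(r)K(r)]\, k(x-r)\,dr\leq Mq\int_0^x k(x-r)\,dr = Mq\, K(x),
\]
where the second inequality uses $\bar\mu(r)K(r)\leq q$ on $[0,T]$ and the last equality is a change of variable. At $x=0$ both sides vanish (the hypothesis forces $\phi(0)=0$), so the estimate is consistent with the boundary value $\Kcal\phi(0)=\phi(0)$ from Definition~\ref{cvp-07}, and continuity of $\Kcal\phi$ on $[0,T]$ is supplied by Lemma~\ref{cvp-15}.

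For the iterated bound $|\Kcal^i\phi(x)|\leq Mq^i K(x)$ I would induct on $i\geq 1$. Given the inductive hypothesis, Lemma~\ref{cvp-15} guarantees $\Kcal^{i-1}\phi\in C[0,T]$, and by hypothesis it satisfies $|\Kcal^{i-1}\phi(x)|\leq Mq^{i-1}K(x)$. Applying the $i=1$ case to $\psi:=\Kcal^{i-1}\phi$ with the constant $Mq^{i-1}$ in place of $M$ yields $|\Kcal^i\phi(x)|=|\Kcal\psi(x)|\leq Mq^{i-1}\cdot q\cdot K(x)=Mq^i K(x)$, closing the induction. I do not foresee a genuine obstacle here; the only subtle point is noticing that $q$ bounds the \emph{product} $\bar\mu(r)K(r)$ rather than either factor alone, so the regrouping of the integrand must be performed before the bound $\leq q$ is invoked.
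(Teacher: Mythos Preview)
Your argument is correct and is essentially the same as the paper's: both proofs recognise that the key quantity to control is the product $\bar\mu(r)K(r)\leq q$, bound $|\phi(r)|$ by $MK(r)$ inside the integral, and then iterate. The only cosmetic difference is that the paper first pulls $\sup_{r\leq x}|\phi(r)|\bar\mu(r)$ out of the integral and then uses the hypothesis, whereas you apply the hypothesis first and then extract the sup-bound on $\bar\mu(r)K(r)$; the induction step is identical.
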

\begin{proof} 
	Since $\bar\mu$ is decreasing, we have for all $x>0$ 
	\begin{align*}
		|\Kcal\phi(x)|
		&=\left|\int_0^x \bar{\mu}(r) k(x-r) \phi(r)\,dr\right|\\
		&\leq  \sup_{r\leq x} \left[\phi(r) \bar{\mu}(r)\right] \int_0^x  k(s)\,ds\\
		&\leq M  \sup_{r\leq x} \left[\bar{\mu}(r)\int_0^r  k(s)\,ds\right] \int_0^x k(s)\,ds\\
		&\leq M q \int_0^x k(s)\,ds,
	\end{align*}
	where we use that $|\phi(r)|\leq M\int_0^r k(s)\,ds$. Iterating the above estimate we get for $i\geq 2$
	\begin{align*}
		|\Kcal^i \phi(x)|
		&= \left|\Kcal^{i-1} \Kcal\int_0^x\bar{\mu}(r) k(x-r) \phi(r)\,dr\right|\\
		&\leq M q \Kcal^{i-1} \int_0^x k(r) \,dr\\
		&\leq M q^2 \Kcal^{i-2} \int_0^x k(r) \,dr\\
		&\leq\dots\leq M q^i \int_0^x k(r) \,dr. 
		\qedhere
	\end{align*}
\end{proof}

\begin{corollary}\label{cvp-19} 
	If $\limsup_{x\to 0}\bar{\mu}(x)\int_0^x k(s)\,ds<1$, then $q:=\sup_{x\in [0,T]}\bar{\mu}(x)\int_0^x k(s)\,ds<1$. Let $\phi \in C[0,T]$ such that $|\phi(x)|\leq M\int_0^x k(s)\,ds$ for all $x\in [0,T]$ and some constant $M>0$. Then 
	\begin{gather*} 
		\left|\sum_{i=1}^\infty \Kcal^i\phi (x)\right|
		\leq  \sum_{i=1}^\infty |\Kcal^i \phi(x)|
		\leq M \sum_{i=1}^\infty q^i  \int_0^x k(s)\,ds,
		\quad x\in [0,T],
	\end{gather*} 
	i.e.\ the series converges uniformly; in particular, $\sum_{i=1}^\infty \Kcal^i  \phi \in C[0,T]$.
\end{corollary}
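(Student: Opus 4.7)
The plan is to assemble the corollary from the three preceding lemmas, essentially by chaining them together.

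First I would handle the claim $q<1$. Lemma~\ref{cvp-11} shows the equivalence $q'=1\iff q=1$, and its proof already established the bound $q'\leq q\leq 1$. Hence if $q=1$, then $q'=1$, contradicting the standing hypothesis $q'=\limsup_{x\to0}\bar\mu(x)\int_0^x k(s)\,ds<1$. This forces $q<1$.

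Next I would bound the iterates. By hypothesis $|\phi(x)|\leq M\int_0^x k(s)\,ds$, so Lemma~\ref{cvp-17} applies and gives $|\Kcal^i\phi(x)|\leq M q^i\int_0^x k(s)\,ds$ for every $i\geq 1$ and every $x\in[0,T]$. Summing these pointwise estimates and using the triangle inequality yields
\begin{equation*}
    \Bigl|\sum_{i=1}^\infty \Kcal^i\phi(x)\Bigr|
    \leq \sum_{i=1}^\infty |\Kcal^i\phi(x)|
    \leq M\sum_{i=1}^\infty q^i \int_0^x k(s)\,ds,
\end{equation*}
which is the displayed inequality of the corollary.

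For uniform convergence on $[0,T]$ I would invoke the Weierstrass $M$-test: since $k\in L^1_{\mathrm{loc}}[0,\infty)$ (part of the Sonine pair assumption), the function $x\mapsto \int_0^x k(s)\,ds$ is bounded by $\int_0^T k(s)\,ds<\infty$ on $[0,T]$. Combined with $q<1$, the supremum norm of $\Kcal^i\phi$ on $[0,T]$ is dominated by $Mq^i\int_0^T k(s)\,ds$, whose sum over $i$ is finite. Thus the partial sums $\sum_{i=1}^N\Kcal^i\phi$ form a Cauchy sequence in $C[0,T]$. Since Lemma~\ref{cvp-15} guarantees $\Kcal^i\phi\in C[0,T]$ for each $i$, and $C[0,T]$ is complete under the supremum norm, the limit $\sum_{i=1}^\infty\Kcal^i\phi$ lies in $C[0,T]$.

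There is no real obstacle here — the corollary is essentially a bookkeeping step assembling Lemmas~\ref{cvp-11}, \ref{cvp-15} and~\ref{cvp-17}. The only point that requires a moment of care is ensuring that the bound in Lemma~\ref{cvp-17} actually applies to every iterate $\Kcal^i\phi$ (not just to $\phi$); this is handled by the inductive statement already built into that lemma, so no extra argument is needed.
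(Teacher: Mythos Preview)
Your proposal is correct and matches the paper's intent: the corollary is stated without an explicit proof, precisely because it follows by chaining Lemmas~\ref{cvp-11}, \ref{cvp-15} and~\ref{cvp-17} together with the Weierstrass $M$-test, exactly as you outline.
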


\begin{lemma}\label{cvp-21}  
	Let $g\in C[0,T]$. Under the conditions of Theorem \ref{cvp-09}, the representation \eqref{cvp-e10} of $\cei{0}{}{f}g$ has the following alternative form:
	\begin{align*}
		\cei{0}{}{f}g 
		=\rli{0}{}{f}\left[\bar{\mu} \sum_{i=0}^{\infty}\Kcal^i\left[\bar{\mu}^{-1}g \right]\right]
		=\sum_{i=0}^{\infty}\Kcal^{i+1}\left[\bar{\mu}^{-1}g\right].
	\end{align*}
\end{lemma}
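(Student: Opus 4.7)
The plan is to reduce both alternative expressions for $\cei{0}{}{f}g$ to the defining series \eqref{cvp-e10}, exploiting the identity
$$\Kcal\phi(x) = \int_0^x \bar\mu(r)k(x-r)\phi(r)\,dr = \rli{0}{}{f}[\bar\mu\phi](x),$$
already noted immediately after Definition~\ref{cvp-07}. Under Assumption~\ref{A1} we have $\bar\mu(0+)=\infty$ and $\bar\mu>0$ on $(0,\infty)$, so $\bar\mu^{-1}\in C[0,T]$ with $\bar\mu^{-1}(0)=0$; in particular $\bar\mu^{-1}g\in C[0,T]$ and all the objects below are well-defined.

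The second equality is essentially immediate. Substituting $\phi=\bar\mu^{-1}g$ in the identity above gives $\Kcal[\bar\mu^{-1}g]=\rli{0}{}{f}g$, and iteration yields $\Kcal^{i+1}[\bar\mu^{-1}g]=\Kcal^i\rli{0}{}{f}g$ for every $i\geq 0$. Summing and invoking \eqref{cvp-e10} then gives
$$\sum_{i=0}^\infty \Kcal^{i+1}[\bar\mu^{-1}g] \;=\; \sum_{i=0}^\infty \Kcal^i\rli{0}{}{f}g \;=\; \cei{0}{}{f}g.$$

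For the first equality I would pull one copy of $\Kcal$ out of the sum and use $\Kcal[\cdot]=\rli{0}{}{f}[\bar\mu\,\cdot\,]$ once more:
$$\sum_{i=0}^\infty \Kcal^{i+1}[\bar\mu^{-1}g] \;=\; \Kcal\left[\sum_{i=0}^\infty \Kcal^i[\bar\mu^{-1}g]\right] \;=\; \rli{0}{}{f}\left[\bar\mu\sum_{i=0}^\infty \Kcal^i[\bar\mu^{-1}g]\right].$$
The step requiring justification is the interchange of $\Kcal$ (equivalently, of $\rli{0}{}{f}\bar\mu$) with the infinite sum.

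This is the main (minor) obstacle: the $i=0$ term $\bar\mu^{-1}g$ does \emph{not} itself satisfy the bound $|\phi(x)|\le M\int_0^x k(s)\,ds$ required by Corollary~\ref{cvp-19}, because $\bar\mu(x)\int_0^x k(s)\,ds$ need not be bounded away from $0$. I handle this by splitting off $i=0$: by the computation above, $\sum_{i\ge 1}\Kcal^i[\bar\mu^{-1}g]=\sum_{i\ge 0}\Kcal^i\rli{0}{}{f}g$, and since $|\rli{0}{}{f}g(x)|\le\|g\|_{C[0,T]}\int_0^x k(s)\,ds$ by Lemma~\ref{cvp-13}, Corollary~\ref{cvp-19} yields uniform convergence on $[0,T]$. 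Hence the inner sum equals $\bar\mu^{-1}g+\cei{0}{}{f}g\in C[0,T]$. Since $\Kcal$ is bounded on $C[0,T]$ (from $|\Kcal\phi(x)|\le\|\phi\|_{C[0,T]}\,\bar\mu(x)\int_0^x k(s)\,ds\le q\|\phi\|_{C[0,T]}$, already used in Lemma~\ref{cvp-17}), it commutes with uniform limits, so termwise application of $\Kcal$ is legitimate and the two alternative forms coincide with $\cei{0}{}{f}g$.
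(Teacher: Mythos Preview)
Your argument follows essentially the same route as the paper's: both proofs rest on the identity $\Kcal\phi=\rli{0}{}{f}[\bar\mu\phi]$ (equivalently $\rli{0}{}{f}g=\Kcal[\bar\mu^{-1}g]$), use it to shift the index in the defining series, and then pull one copy of $\Kcal$ (resp.\ $\rli{0}{}{f}\bar\mu$) through the sum. You are in fact more explicit than the paper about justifying the interchange of $\Kcal$ with the infinite sum, which the paper simply performs without comment in its final line.

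One correction: your claimed bound $|\Kcal\phi(x)|\le\|\phi\|_{C[0,T]}\,\bar\mu(x)\int_0^x k(s)\,ds$ is false, because $\bar\mu$ is \emph{decreasing}, so $\bar\mu(r)\ge\bar\mu(x)$ for $r\le x$ and the inequality goes the other way. The correct (and simpler) estimate is
\[
|\Kcal\phi(x)|\le \|\phi\|_{C[0,T]}\int_0^x \bar\mu(r)k(x-r)\,dr = \|\phi\|_{C[0,T]},
\]
using the Sonine identity directly. This still gives $\|\Kcal\|_{C[0,T]\to C[0,T]}\le 1$, so your conclusion that $\Kcal$ commutes with uniform limits stands, and the proof goes through.
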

\begin{proof}
	From the definition of the operator $\Kcal$  we obtain
	\begin{align*}
		\cei{0}{}{f}g(x) 
		=\sum_{i=0}^{\infty}\Kcal^i\left[\rli{0}{}{f}g\right](x)
		&=\sum_{i=0}^{\infty}\int_0^xk_i(x,r)\rli{0}{}{f}g(r)\,dr\\
		&=\sum_{i=0}^{\infty}\int_0^xk_i(x,r)\Kcal\left[\bar{\mu}^{-1}g\right](r)\, dr\\
		&=\sum_{i=0}^{\infty}\Kcal^{i+1}\left[\bar{\mu}^{-1}g\right](x)\\
		&=\sum_{i=0}^{\infty}\Kcal\left[\bar{\mu}^{-1} \bar{\mu}\Kcal^i\left[\bar{\mu}^{-1}g\right]\right](x)\\
		&=\sum_{i=0}^{\infty}\rli{0}{}{f}\left[\bar{\mu}\Kcal^i\left[\bar{\mu}^{-1}g \right]\right](x)\\
		&=\rli{0}{}{f}\left[\sum_{i=0}^{\infty}\bar{\mu} \Kcal^i\left[\bar{\mu}^{-1}g \right]\right](x).
		\qedhere
	\end{align*}
\end{proof}

\noindent
After these preparations, we can finally turn to the
\begin{proof}[Proof of Theorem \ref{cvp-09}]
	\emph{Step~1.} We show the existence of $\phi$ using a Picard iteration scheme. Define the following sequence for all $x\in [0,T]$
\begin{equation}\label{cvp-e12}
	\left\{
	\begin{aligned}
	\bar{\phi}_{n+1}(x)&=\rli{0}{}{f}\left[g+\bar{\phi}_n\bar{\mu}\right](x)\\
	\bar{\phi}_{0}(x)&=0.
	\end{aligned}
	\right.
\end{equation}
Thus, observing that $\Kcal\phi = \rli{0}{}{f}\left[\bar\mu\phi\right]$, we get
\begin{gather*}
	\bar{\phi}_{n+1}(x) 
	= \sum_{i=0}^n\Kcal^i\left[\rli{0}{}{f}g(x)\right].
\end{gather*}
For every $x$ the limit $\bar{\phi}(x):=\lim_{n\to \infty}\bar{\phi}_{n+1}(x)=\sum_{i=0}^{\infty}\Kcal^i\left[\rli{0}{}{f}g\right](x)$ exists and defines a continuous function, see Corollary~\ref{cvp-19} and Lemma~\ref{cvp-13}. We set 
\begin{gather*}
	\cei{0}{}{f}g
	:= \sum_{i=0}^\infty \Kcal^i\left[\rli{0}{}{f}g\right].
\end{gather*}

\medskip\noindent
\emph{Step~2.}  We have the following estimate
\begin{equation}\label{cvp-e14}
	\Kcal^i \left[\bar{\mu}^{-1} g\right](x)
	\leq q^{i-1} \|g\|_{C[0,T]} \int_0^x k(r)\,dr,
\end{equation}
which we will prove by induction. If $i=1$, we get from the definition of the operator $\Kcal$ that 
\begin{align*}
	\Kcal\left[\bar{\mu}^{-1}g\right](x)
	= \int_0^x \bar{\mu}(r)\bar{\mu}^{-1}(r)g(r)k(x-r)\,dr
	\leq \|g\|_{C[0,T]} \int_0^x k(r)\,dr.
\end{align*}
We assume that \eqref{cvp-e14} holds for some $i\in \nat$. The induction step $i \rightsquigarrow i+1$ is achieved by
\begin{align*}
	\Kcal^{i+1}\left[\bar{\mu}^{-1}g\right](x)
	&=\Kcal^i \Kcal\bar{\mu}^{-1}g(x)\\
	&\leq \|g\|_{C[0,T]} \Kcal^i  \int_0^x k(r)\,dr\\
	&\leq \|g\|_{C[0,T]} q^i \int_0^T k(r)\,dr,
\end{align*}
where the second inequality follows from Corollary~\ref{cvp-19}. This completes the induction. 

\medskip\noindent
\emph{Step~3.} We show that $\cei{0}{}{f}g\in C_{\bar{\mu}}[0,T]$. We have already seen Step~1 that $\cei{0}{}{f}g\in C[0,T]$. In order to show $\cei{0}{}{f}g\in  C_{\bar{\mu}}(0,T]$, we use Lemma~\ref{cvp-21} and write $\cei{0}{}{f}g=\rli{0}{}{f}\hat g$ with the function $\hat g=\bar{\mu} \sum_{i=0}^{\infty}\Kcal^i\left[\bar{\mu}^{-1}g \right]$. In view of Theorem~\ref{bd-17} it is enough to show that $\hat g\in C(0, T]\cap L^1(0, T]$. The integrability of $\hat g$ follows from
\begin{align*}
	\int_0^T & \left|\sum_{i=0}^{\infty}\bar{\mu}(x) \Kcal^i\left[\bar{\mu}^{-1}g\right](x)\right| dx \\
	&\leq \int_0^T \bar{\mu}(x) \sum_{i=0}^{\infty} \left|\Kcal^i\left[\bar{\mu}^{-1}g\right](x)\right| dx \\
	&\leq\int_0^T g(x)\,dx + \frac {\|g\|_{C[0,T]}}{1-q}\int_0^T\int_0^x k(s)\,ds\; \bar{\mu}(x) \,dx \\
	&\leq T\|g\|_{C[0,T]} + \frac {\|g\|_{C[0,T]}}{1-q}\int_0^T\bar{\mu}(x) \,dx\int_0^T k(s)\,ds 
	<\infty, 
\end{align*}
where the second inequality follows from \eqref{cvp-e14} and the geometric series.

In order to see the continuity of $\hat g$, it suffices to show that $\sum_{i=0}^{\infty}\Kcal^i\left[\bar{\mu}^{-1}g \right]$ is continuous, since $\bar\mu$ is continuous on $C(0, T]$. Notice that $\bar\mu^{-1} g\in C[0,T]$, since the limit $\bar\mu(0+)\in (0,\infty]$ exists. We have
\begin{gather*}
	\sum_{i=0}^{\infty}\Kcal^i\left[\bar{\mu}^{-1}g \right]
	=\bar{\mu}^{-1}g +\sum_{i=1}^{\infty}\Kcal^i\left[\bar{\mu}^{-1}g \right],
\end{gather*}
where $\bar\mu^{-1}g\in C[0,T]$ and each $\Kcal^i\left[\bar{\mu}^{-1}g \right]\in C[0,T]$, see Lemma~\ref{cvp-15}. By Corollary~\ref{cvp-19} the series converges uniformly, and we conclude that $\sum_{i=0}^{\infty}\Kcal^i\left[\bar{\mu}^{-1}g \right]\in C[0,T]$

Thus, $\cei{0}{}{f}g\in C[0,T]\cap C_{\bar\mu}(0, T]= C_{\bar\mu}[0, T]$.

\medskip\noindent
\emph{Step~4.} As $\bar{\phi}$ is in $C_\mu[0,T]$, we can apply $\rld{0}{}{f}$ and obtain
\begin{align*}
\rld{0}{}{f}\bar{\phi}
&=\rld{0}{}{f}\rli{0}{}{f}\left[\bar{\mu} \sum_{i=0}^{\infty}\Kcal^i\left[\bar{\mu}^{-1}g \right]\right]\\
&=\bar{\mu} \sum_{i=0}^{\infty}\Kcal^i\left[\bar{\mu}^{-1}g \right]\\
&=g+\bar{\mu} \sum_{i=1}^{\infty}\Kcal^i\left[\bar{\mu}^{-1}g \right]\\
&=g+\bar{\mu} \sum_{i=1}^{\infty}\Kcal^{i-1}\left[\rli{0}{}{f}g \right]\\
&=g+\bar{\mu}\bar{\phi}
\end{align*}

\medskip\noindent
\emph{Step~5} Let us finally show that the solution to the initial value problem \eqref{cvp-e08} is unique in $C_{\bar{\mu}}[0,T] $. Let $\phi_1,\phi_2\in C_{\bar{\mu}}[0,T] $ be two solutions to \eqref{cvp-e08}. By the linearity of the operator $\ced{0}{}{f}$, $\psi:=\phi_1-\phi_2\in C_{\bar{\mu}}[0,T]$ satisfies the following equation
\begin{equation}\label{cvp-e16}
\left\{\begin{aligned}
		\rld{0}{}{f}\psi(x)& =\psi(x)\bar{\mu}(x), &&x>0;\\
		\psi(0)&=0.
	\end{aligned}\right.
\end{equation}
With the argument from Step~4, we can apply the inverse operator of $\rld{0}{}{f}$ on both sides, and get $\psi=\rli{0}{}{f}\left[\psi \bar{\mu}\right]=\Kcal\psi$. We show that $\psi= 0$ on $[0,T]$, hence $\phi_1=\phi_2$.  Assume, to the contrary, that  $\psi\neq 0$.  Because $\psi (x)=\Kcal\psi(x)$ we have 
\begin{gather*} 
		\int_0^x (\psi(x)-\psi(r))\bar{\mu}(r)k(x-r)\,dr=0,\quad\text{for all\ \ }  x\in [0,T].
\end{gather*} 
Take $ \xi\in \argmax_{r\in[0,T]}|\psi(r)|$. Then we have $ \int_0^\xi (\psi(\xi)-\psi(r))\bar{\mu}(r)k(\xi-r)\,dr=0$. This implies that $\psi(r)=\psi(\xi)$ for all $r\in [0,T]$. As $\psi(0)=0$, it follows that $\psi=0$, and the proof is completed.
\end{proof}

\begin{remark}\label{cvp-23} 
	The proof of Theorem \ref{cvp-09} shows that $\cei{0}{}{f}g\in C_{\bar{\mu}}[0, T]$ for any $g\in C[0, T]$.
\end{remark}

\section{Resolvent equation}\label{re}

In order to show that $-\ced{0}{}{f}$ is the generator of a stochastic process, we use the Hille--Yosida theorem. This means that we have to solve the resolvent equation \eqref{re-e02} below.
\begin{theorem}\label{re-03} 
	Let $(\bar\mu,k)$ be a positive Sonine pair and $\Lscr(\bar\mu; \lambda)=f(\lambda)/\lambda$,  $\Lscr(k; \lambda)=1/f(\lambda)$, where $f$ is a Bernstein function satisfying the Assumptions~\ref{A1} and~\ref{A2}. Moreover, assume that $\limsup_{x\to 0}\bar{\mu}(x)\int_0^x k(s)\,ds<1$. Then, for any $\phi_0\in \real$ and $\lambda\in\real\setminus\{0\}$, the following initial value problem  
	\begin{equation}\label{re-e02}
	\left\{
	\begin{aligned}
		\ced{0}{}{f}\phi(x) &=\lambda \phi(x), &&x\in (0, T],
	\\ \phi(0) &=\phi_0,
	\end{aligned}
	\right.
	\end{equation}
	has a unique solution $\phi$ in $C_{\bar{\mu}}[0,T]$, which is given by $\phi(x)=\phi_0\sum_{i=0}^{\infty}\left(\lambda \cei{0}{}{f}\right)^i \I(x)$.
\end{theorem}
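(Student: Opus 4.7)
The strategy is to use Theorem~\ref{cvp-09} to recast the initial value problem \eqref{re-e02} as a Volterra-type fixed-point equation and then solve it by Picard iteration. Since constants belong to the kernel of $\ced{0}{}{f}$ (which is immediate from the integral representation in Remark~\ref{cvp-05}), applying Theorem~\ref{cvp-09} with source term $g := \lambda\phi$ shows that \eqref{re-e02} is equivalent to the fixed-point equation $\phi = \phi_0 + \lambda\,\cei{0}{}{f}\phi$ on $[0,T]$. Setting $\phi^{(0)} := \phi_0\cdot\I$ and $\phi^{(n+1)} := \phi_0 + \lambda\,\cei{0}{}{f}\phi^{(n)}$, the linearity of $\cei{0}{}{f}$ and an easy induction give $\phi^{(n)} = \phi_0\sum_{i=0}^{n}(\lambda\,\cei{0}{}{f})^{i}\I$, which matches the series claimed in the theorem.

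The analytic core of the proof is uniform convergence of this Neumann series on $[0,T]$ for every $\lambda\in\real$. Writing $M := 1/(1-q)$ and $K(x) := \int_0^x k(s)\,ds$, Corollary~\ref{cvp-19} together with Lemma~\ref{cvp-21} delivers the Volterra-type estimate $|\cei{0}{}{f}g(x)|\leq M K(x)\,\|g\|_{C[0,x]}$, and hence by induction $|(\cei{0}{}{f})^{n}\I(x)|\leq (MK(x))^{n}$. This already yields absolute convergence when $|\lambda|<1/(MK(T))$. For arbitrary $\lambda$ one passes to the finer integral representation $\cei{0}{}{f}g(x)=\int_0^x\tilde\kappa(x,r)\,g(r)\,dr$, with total kernel mass $\int_0^x\tilde\kappa(x,r)\,dr = \cei{0}{}{f}\I(x)\leq MK(x)$, and refines the induction via integration by parts against the distribution function $r\mapsto\int_0^r\tilde\kappa(x,s)\,ds$, exploiting the monotonicity of $K$ and the Sonine identity $\bar\mu\ast k\equiv 1$, to promote the geometric bound to super-exponential decay of the iterate norms. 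In the stable case $f(\lambda)=\lambda^\alpha$ this reproduces the familiar Mittag--Leffler behaviour $x^{n\alpha}/\Gamma(n\alpha+1)$ which is entire in $\lambda$.

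Once uniform convergence is established, passing to the limit in the Picard scheme—using continuity of $\cei{0}{}{f}$ on $C[0,T]$ from Lemma~\ref{cvp-13}—produces a solution of the fixed-point equation and hence of \eqref{re-e02}; membership in $C_{\bar\mu}[0,T]$ is inherited from the iterates by Remark~\ref{cvp-23} and uniform convergence. For uniqueness, if $\phi_1,\phi_2\in C_{\bar\mu}[0,T]$ both solve \eqref{re-e02}, then $\psi := \phi_1-\phi_2$ satisfies $\ced{0}{}{f}\psi = \lambda\psi$ with $\psi(0)=0$, hence $\psi = \lambda^{n}(\cei{0}{}{f})^{n}\psi$ for every $n\in\nat$, and the decay from the previous step forces $\psi\equiv 0$. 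The main obstacle is precisely this refined iterate estimate: the geometric bound $(MK(x))^n$ only covers small $|\lambda|$, and upgrading it to the super-exponential decay required for all $\lambda\in\real$ is the technically delicate point, relying on the Volterra integral structure of $\cei{0}{}{f}$ and the monotonicity of the weight $K$.
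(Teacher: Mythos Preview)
Your overall architecture matches the paper's: recast \eqref{re-e02} via Theorem~\ref{cvp-09} as the fixed-point equation $\phi=\phi_0+\lambda\,\cei{0}{}{f}\phi$, run Picard iteration, and identify the Neumann series. Uniqueness via iterating the fixed-point relation is also fine. The genuine gap is exactly where you flag it: the convergence of $\sum_i(\lambda\,\cei{0}{}{f})^i\I$ for \emph{all} $\lambda$. Your bound $|(\cei{0}{}{f})^n\I(x)|\leq (MK(x))^n$ is correct but only geometric, and the paragraph about ``integration by parts against the distribution function $r\mapsto\int_0^r\tilde\kappa(x,s)\,ds$'' together with the Sonine identity is a hope, not an argument. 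Differentiating the iterates and controlling the resulting derivative through a singular kernel is not straightforward, and you do not indicate how the Sonine relation $\bar\mu*k\equiv 1$ would actually enter such a computation; as written, this step is missing.

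The paper closes this gap differently, and the mechanism is worth noting. Rather than trying to extract factorial-type decay directly from the censored kernel, it proves the pointwise comparison
\[
    \bigl(\cei{0}{}{f}\bigr)^{i}\I(x)\;\leq\;\Bigl(\tfrac{1}{1-q}\Bigr)^{i}\bigl(\rli{0}{}{f}\bigr)^{i-1}K(x),
\]
i.e.\ each censored iterate is dominated by an iterated \emph{Riemann--Liouville} integral of $K$, at the cost of the geometric factor $(1-q)^{-i}$. The advantage is that $\rli{0}{}{f}$ is a genuine convolution, so $\Lscr\bigl[(\rli{0}{}{f})^{i-1}K;s\bigr]=1/(s\,f(s)^{i})$. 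Since $f(s)\to\infty$ (Assumption~\ref{A1}), one can choose $s_0$ with $|\lambda|/((1-q)f(s_0))<1$, get convergence of the majorant series in $L^1(e^{-s_0 x}\,dx)$, and then exploit that each $(\rli{0}{}{f})^{i-1}K$ is increasing to upgrade this to uniform convergence on $[0,T]$ via the endpoint value at $T$. This Laplace-transform route is what replaces the ``super-exponential decay'' you were looking for; your Mittag--Leffler intuition is correct in the stable case precisely because $(\rli{0}{}{f})^{i-1}K(x)=x^{i\alpha}/\Gamma(i\alpha+1)$ there.
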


\begin{proof}
Assume, for a moment, that $\phi\in C_{\bar\mu}[0,T]$ is a solution to \eqref{re-e02}. Applying Theorem~\ref{cvp-09} shows that
\begin{gather*}
	\phi(x) = \phi_0 + \cei{0}{}{f}\left[\ced{0}{}{f}\phi\right](x) = \phi_0 + \lambda\cei{0}{}{f}\phi(x).
\end{gather*}
Repeatedly inserting this equality into itself shows that any solution $\phi$ is necessarily of the form $\phi(x)=\phi_0\sum_{i=0}^{\infty} \left(\lambda \cei{0}{}{f}\right)^i \I(x)$.

We are now going to show that this series converges in $C_{\bar\mu}[0,T]$, establishing in this way the existence and uniqueness of the solution to \eqref{re-e02}. We use a Picard iteration scheme:
\begin{equation}\label{re-e04}
	\left\{
	\begin{aligned}
	\phi_{n}(x) &=\lambda \cei{0}{}{f}\phi_{n-1}(x)+\phi_0,\\
	\phi_{0}(x) &=\phi_0.
	\end{aligned}
	\right.
\end{equation}
Obviously,
\begin{gather*}
	\phi_{n+1}(x) 
	=\phi_0\sum_{i=0}^{n}\left(\lambda \cei{0}{}{f}\right)^i\I(x),
\end{gather*}
and we have to show that $\sum_{i=0}^{\infty}\lambda^i \left(\cei{0}{}{f}\right)^i \I$ converges and defines an element of $C_{\bar\mu}[0,T]$.

We use induction to show that for all $i\in\nat$
\begin{equation}\label{re-e06}
	\left(\cei{0}{}{f}\right)^i \I(x) 
	\leq \left(\frac{1}{1-q}\right)^i \left(\rli{0}{}{f}\right)^{i-1} K(x),
	\quad\text{where}\quad K(x) := \int_0^x k(y)\,dy.
\end{equation}
We have $q := \sup_{x\in [0,T]}\bar{\mu}(x)\int_0^x k(s)\,ds<1$, since $\limsup_{x\to 0}\bar{\mu}(x)\int_0^x k(s)\,ds<1$, see Corollary~\ref{cvp-19}. Recall from \eqref{cvp-e10} that $\cei{0}{}{f}\I(x) = \sum_{i=0}^\infty \Kcal^i \left[\rli{0}{}{f} \I\right](x)$. For $i=1$,  we get from  Corollary \ref{cvp-19}  that 
\begin{align*}
	\Kcal^i \left[\rli{0}{}{f} \I\right](x) 
	\leq q^i K(x),
\end{align*}
for all $i\in \nat_0$ and
\begin{gather*}
	\cei{0}{}{f}\I(x)
	= \sum_{i=0}^\infty \Kcal^i \left[\rli{0}{}{f} \I\right](x) 
	\leq \sum_{i=0}^\infty q^i K(x)
	= \frac{1}{1-q} K(x).
\end{gather*}
Using \eqref{re-e06} as induction assumption for some $i\in \nat$, we get  for $i \rightsquigarrow i+1$ that
\begin{align*}
	\left(\cei{0}{}{f}\right)^{i+1}\I(x)
	&=\cei{0}{}{f} \left(\cei{0}{}{f}\right)^{i}\I(x)\\
	&\leq \frac{1}{(1-q)^i} \cei{0}{}{f} \left(\rli{0}{}{f}\right)^{i-1} K(x)\\
	&=\frac{1}{(1-q)^i} \sum_{{n=0}}^\infty \Kcal^n \left\{\rli{0}{}{f} \left[\left(\rli{0}{}{f}\right)^{i-1} K\right]\right\}(x)\\
	&\leq\frac{1}{(1-q)^{i+1}} \left(\rli{0}{}{f}\right)^iK(x),
\end{align*}
where we use \eqref{cvp-e10} for the second equality; the last inequality follows from 
\begin{align*}
	\Kcal^n \rli{0}{}{f} \left[\left(\rli{0}{}{f}\right)^{n-1} K\right](x)
	&=\Kcal^{n-1} \Kcal \left[\left(\rli{0}{}{f}\right)^{i}K\right](x)\\	
	&=\Kcal^{n-1} \rli{0}{}{f}\left\{ \bar{\mu} \left[\left(\rli{0}{}{f}\right)^{i} K\right]\right\}(x)\\
	&\stackrel {(*)}{\leq} \Kcal^{n-1} \rli{0}{}{f}  \left[ \left(\rli{0}{}{f}\right)^{i} \left(\bar{\mu} K\right)\right](x)\\
	&\leq q \Kcal^{n-1} \left(\rli{0}{}{f}\right)^{i+1} \I(x)\\
	&= q \Kcal^{n-1} \left(\rli{0}{}{f}\right)^{i} K(x).
\end{align*}
In the step marked by $(*)$ we use the monotonicity of $\bar\mu$ and the integral representation of the operator $\rli{0}{}{f}$. Repeated  use of  the above calculation yields
\begin{gather*}
	\Kcal^n \rli{0}{}{f} \left[\left(\rli{0}{}{f}\right)^{i-1} K\right](x)
	\leq q^n \left(\rli{0}{}{f}\right)^{i} K(x).
\end{gather*}
This finishes the proof of \eqref{re-e06}. 

Now we show how the assertion of the theorem follows from \eqref{re-e06}.  Taking the  Laplace transform on the right hand side of \eqref{re-e06},  we obtain for $i\in\nat$
\begin{align*}
	\Lscr\left(\left(\rli{0}{}{f}\right)^{i-1} K; s\right)
	&= \left[\Lscr \left(k; s\right)\right]^{i-1} \Lscr \left(K; s\right)\\
	&=\frac1{f^{i-1}(s)}\frac1{sf(s)}
	=\frac1{sf^i(s)}.
\end{align*}
Define $F_n(x) :=\sum_{i=0}^n \left(\frac{|\lambda|}{1-q}\right)^i \left(\rli{0}{}{f}\right)^{i-1} K(x)$. We get 
\begin{align*}
	\Lscr\left(F_n; s\right) 
	&= \Lscr\left(\sum_{i=0}^n \left(\frac{|\lambda|}{1-q}\right)^i \left(\rli{0}{}{f}\right)^{i-1} K; s\right)\\
 	&= \sum_{i=0}^n \left(\frac{|\lambda|}{1-q}\right)^i \Lscr\left( \left(\rli{0}{}{f}\right)^{i-1} K; s\right)\\
 	&=\sum_{i=0}^n \left(\frac{|\lambda|}{1-q}\right)^i \frac1{sf^i(s)}.
\end{align*}
If  $s\geq s_0$ is large enough to guarantee that $|\lambda|/[(1-q)f(s)]<1$, we see that
\begin{gather*}
	\lim_{n\to\infty}\Lscr\left(F_n(x); s\right)
	= \sum_{i=0}^\infty \left(\frac{|\lambda|}{1-q}\right)^i \frac1{sf^i(s)}\quad\text{for all\ \ } s\geq s_0.
\end{gather*}
For all $m,n \in\nat$, $m>n$ we obtain
\begin{align*}
	\|F_m-F_n\|_{L^1(e^{-s_0 x}\,dx)}
	&= \int_0^\infty e^{-s_0 x}\left(F_m(x)-F_n(x)\right)dx\\
	&= \sum_{i=n+1}^m \int_0^\infty e^{-s_0 x}\left(\frac{|\lambda|}{1-q}\right)^i \left(\rli{0}{}{f}\right)^{i-1} K(x) dx\\
	&= \sum_{i=n+1}^m \left(\frac{|\lambda|}{1-q}\right)^i \frac1{s_0f^i(s_0)},
\end{align*}
which implies that $(F_n)_{n\in \nat}$ is a Cauchy sequence in $L^1(e^{-s_0x}\,dx)$. Thus, there exists a subsequence $(F_{n_k})_{k\in\nat}$, which converges almost everywhere to a function $F\in L^1(e^{-s_0x}\,dx)$. Without loss of generality we may assume that the subsequence converges at the end-point $x=T$ of the interval $[0,T]$, i.e.\ $\lim_{k\to\infty} F_{n_k}(T) = F(T)$. For $n < m$ we obtain that
\begin{align*}
	\sup_{x\in [0, T]}\left|F_m(x)-F_n(x)\right|
	&=\sup_{x\in [0, T]}\left[\sum_{i=n+1}^m \left(\frac{|\lambda|}{1-q}\right)^i  \left(\rli{0}{}{f}\right)^{i-1} K(x)\right] \\
	&\leq \sum_{n+1}^m\left(\frac{|\lambda|}{1-q}\right)^i  \left(\rli{0}{}{f}\right)^{i-1} K(T) \\
	&= F_m(T)-F_n(T).
\end{align*} 
In the last estimate we use the fact that the operator $\rli{0}{}{f}$ is monotonicity preserving for positive functions. With the same reasoning we get for $n_k\leq n+1<m\leq n_l$ that
\begin{gather*}
	\lim_{n,m\to\infty} \sup_{x\in [0, T]}\left|F_m(x)-F_n(x)\right|
	\leq \lim_{n,m\to\infty} \left[F_m(T)-F_n(T)\right]
	\leq \lim_{k,l\to\infty} \left[F_{n_l}(T)-F_{n_k}(T)\right] = 0.
\end{gather*}
Now we define 
\begin{gather*}
	G_n(x) :=\sum_{i=0}^n \left(\lambda \cei{0}{}{f}\right)^i \I(x).
\end{gather*}
Using \eqref{re-e06}, we get $G_n(x)\leq F_n(x)$ and furthermore, assuming $m>n$ we obtain  uniformly for all $x\in [0, T]$,
\begin{align*}
	\left|G_m(x)-G_n(x)\right|
	&= \left|\sum_{i=n+1}^m \left(\lambda \cei{0}{}{f}\right)^i \I(x)\right|\\
	&\leq \sum_{i=n+1}^m \left(\frac{|\lambda|}{1-q}\right)^i  \left(\rli{0}{}{f}\right)^{i-1} K(T) \\
	&= F_m(T)-F_n(T)
	\xrightarrow[n, m\to \infty]{}0.
\end{align*}
From this we conclude that  $G_n$ converges uniformly on $[0,T]$ to a function $G$.  Thus,  
\begin{gather*}
	G(x)=\phi_0\sum_{i=0}^{\infty}\left(\lambda \cei{0}{}{f}\right)^i \I(x) 
	\quad \text{and} \quad 
	G\in C[0, T].
\intertext{Finally,}
	G(x)
	= \phi_0 + \phi_0\sum_{i=1}^{\infty}\left(\lambda \cei{0}{}{f}\right)^i \I(x)
	= \phi_0 + \phi_0\lambda \cei{0}{}{f} \left[\sum_{i=0}^{\infty}\left(\lambda \cei{0}{}{f}\right)^i \I\right](x)
	= \phi_0 + \lambda \cei{0}{}{f} G(x),
\end{gather*}
which shows that $G$ solves \eqref{re-e02} and, by Theorem \ref{cvp-09} and Remark~\ref{cvp-23}, that $G\in C_{\bar\mu}[0, T]$. This finishes the proof.
\end{proof}

\begin{remark}
As the Picard-Iteration is equivalent to Banach's fixed point theorem, one can also prove Theorem \ref{re-03} by an application of the Banach fixed-point theorem. Let us sketch the argument: Fix $T>0$ and define for $u\in C[0,T]$ the operator.
\begin{align*}
	Su(x):=\phi_0+\lambda\cei{0}{}{f}u(x),\,x>0.
\end{align*}
In this case we can rewrite \eqref{re-e02} in the following fixed-point form
\begin{align*}
	\phi(x) = S \phi(x),\;x\in [0,T].
\end{align*} 
We have already seen that the operator $S$ satisfies 
\begin{align*}
	|Su(x)-Sv(x)|
	\leq  |\lambda| \cei{0}{}{f}|u-v|(x)
	\leq \|u-v\|_{C[0,T]} \frac{|\lambda|}{1-q}K(x).
\end{align*}
As $K(x)\to 0$ as $x\to 0$, there exists an $\epsilon>0$ such that 
\begin{align*}
	\sup_{x\in [0,\epsilon]}K(x)\leq \frac{1-q}{2|\lambda|},
\end{align*}
which implies that there exists a unique $\phi_1\in C[0,\epsilon]$ such that
\begin{align*}
	\phi_1(x)= S \phi_1(x),\;x\in [0,\epsilon].
\end{align*}
Fix $n\in \nat$ and assume there exists a $\phi_{n}\in C[0,n\epsilon]$ such that
\begin{align*}
\phi_{n}(x)=S\phi_n(x)\quad\text{for all\ \ } x\in [0,n\epsilon].
\end{align*}
We define
\begin{align*}
	C_{00}[n\epsilon,(n+1)\epsilon]
	:=\left\{u\in C[0,(n+1)\epsilon]\::\: u(x)=\phi_n(x)\text{\ \ for all\ \ }x\in [0,n\epsilon]\right\}
\end{align*}
and $S_{n+1}u(x):=Su(x)$ for $x\in [0,(n+1)\epsilon]$. Observe that $S_{n+1}u(x)=S\phi_n(x)=\phi_n(x)=u(x)$ for all $x\in [0,n\epsilon]$ and $u\in C_{00}[n\epsilon,(n+1)\epsilon]$. Thus, for for $u,v\in C_{00}[n\epsilon,(n+1)\epsilon]$ we have that
\begin{align*}
	\rli{0}{}{f}(u-v)(x)=\int_0^xk(x-r)(u(r)-v(r))\,dr=0
\end{align*}
for all $x\leq n\epsilon$ and
\begin{align*}
	\left|\rli{0}{}{f}(u-v)(x)\right|
	\leq \int_{n\epsilon}^x k(x-r) |u(r)-v(r)|\,dr
	\leq\int_{n\epsilon}^x k(x-r)\,dr\,\|u-v\|_{C[0,(n+1)\epsilon]}. 
\end{align*}
We obtain that for all $x\in [n\epsilon,(n+1)\epsilon]$
\begin{align*}
	\left|\mathcal{K}\left(\rli{0}{}{f}u\right)(x) - \mathcal{K}\left(\rli{0}{}{f}v\right)(x)\right|
	&\leq \int_{n\epsilon}^x k(x-r)\bar{\mu}(r) \left|\rli{0}{}{f}(u-v)(r)\right| dr\\
	&\leq \int_{n\epsilon}^x k(x-r)\bar{\mu}(r) \int_{n\epsilon}^r k(r-s)\,ds\,dr\,\|u-v\|_{C[0,(n+1)\epsilon]}\\
	&\leq \int_{n\epsilon}^x k(x-r)\bar{\mu}(r) \int_{0}^rk(r-s)\,ds\,dr\,\|u-v\|_{C[0,(n+1)\epsilon]}\\
	&\leq q\int_{n\epsilon}^x k(x-r)\,dr\|u-v\|_{C[0,(n+1)\epsilon]}.
\end{align*}
By iterating this argument we obtain for $i\in \nat$ that
\begin{align*}
	\left|\mathcal{K}^i\left(\rli{0}{}{f}u\right)(x) - \mathcal{K}^i\left(\rli{0}{}{f}v\right)(x)\right|
	&\leq q^i\int_{n\epsilon}^x k(x-r)\,dr\,\|u-v\|_{C[0,(n+1)\epsilon]}\\
	&\leq q^i \int_{0}^{x-n\epsilon}k(r)\,dr\,\|u-v\|_{C[0,(n+1)\epsilon]}\\
	&\leq q^i K(\epsilon)\|u-v\|_{C[0,(n+1)\epsilon]}.
\end{align*}
Summing up we obtain that
\begin{align*}
	\left|\cei{0}{}{f}u(x)-\cei{0}{}{f}v(x)\right|
	\leq K(\epsilon)\sum_{i=0}^\infty q^i \|u-v\|{\infty}=\frac{K(\epsilon)}{1-q}\|u-v\|_{C[0,(n+1)\epsilon]}
\end{align*}
for all $x\in [n\epsilon,(n+1)\epsilon]$. We conclude that there exists a unique $\phi_{n+1}\in C_{00}[n\epsilon,(n+1)\epsilon]$ such that
\begin{align*}
	\phi_{n+1}(x)=S\phi_{n+1}(x)
\end{align*}
for all $x\in [0,(n+1)\epsilon]$. By taking $n$ large enough, we find a unique solution of \eqref{re-e02} in $[0,T]$. This proof also shows that the solution can be extended to $[0,\infty)$ and also to nonlinear equations of the type
\begin{align*}
\left\{
	\begin{aligned}
		\ced{0}{}{f}\phi(x) &=g(\phi(x))+h(x), &&x\in (0, T],
	\\ \phi(0) &=\phi_0,
	\end{aligned}
	\right.
\end{align*}
have a unique solution if $g,h\in C[0,T]$, such that $g$ globally Lipschitz continuous in $[0,T]$.
\end{remark}

For the inhomogeneous resolvent equation we have the following result. 
\begin{theorem}\label{re-05} 
	Let $(\bar\mu,k)$ be a Sonine pair and $\Lscr(\bar\mu; \lambda)=f(\lambda)/\lambda$, $\Lscr(k; \lambda)=1/f(\lambda)$, where $f$ is a Bernstein function satisfying the Assumptions~\ref{A1} and~\ref{A2}. Moreover, assume that $\limsup_{x\to 0}\bar{\mu}(x)\int_0^x k(s)\,ds<1$. Then, for any $\phi_0\in \real$, $\lambda\in\real\setminus\{0\}$ and $g\in C[0,T]$, the following initial value problem  
	\begin{equation}\label{re-e08}
		\left\{
		\begin{aligned}
			\ced{0}{}{f}\phi(x) &=\lambda \phi(x)+g(x), &&x\in (0, T],\\
			\phi(0)&=\phi_0, 
		\end{aligned}
		\right.
	\end{equation}
	has a unique solution in $C_{\bar{\mu}}[0,T]$, which is given by 
\begin{gather*}
	\phi(x)
	= \phi_0 \sum_{i=0}^{\infty}\lambda^i \left(\cei{0}{}{f}\right)^i \I(x)
	+ \sum_{i=0}^{\infty}\lambda^i \left(\cei{0}{}{f}\right)^{i+1}g(x).
\end{gather*}
\end{theorem}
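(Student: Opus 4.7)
The plan is to exploit the linearity of $\ced{0}{}{f}$ and reduce the statement to Theorem \ref{re-03}. Split $\phi = \phi^{(h)} + \phi^{(p)}$, where $\phi^{(h)}$ is the unique $C_{\bar\mu}[0,T]$-solution of the homogeneous resolvent equation with initial value $\phi_0$ supplied by Theorem~\ref{re-03}, that is, $\phi^{(h)}(x) = \phi_0\sum_{i=0}^{\infty}\lambda^i\bigl(\cei{0}{}{f}\bigr)^i\I(x)$. It remains to produce a unique $\phi^{(p)}\in C_{\bar\mu}[0,T]$ that satisfies
\begin{gather*}
\ced{0}{}{f}\phi^{(p)}(x) = \lambda\phi^{(p)}(x) + g(x), \qquad \phi^{(p)}(0) = 0,
\end{gather*}
since adding it to $\phi^{(h)}$ (and invoking Remark~\ref{cvp-23}) gives the stated formula.

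To construct $\phi^{(p)}$, I would apply $\cei{0}{}{f}$ to both sides and use Theorem~\ref{cvp-09}, which states that $\cei{0}{}{f}$ is a right-inverse of $\ced{0}{}{f}$ on $C_{\bar\mu}[0,T]$ vanishing at $0$, to rewrite the problem as the fixed-point identity
\begin{gather*}
\phi^{(p)}(x) = \lambda\,\cei{0}{}{f}\phi^{(p)}(x) + \cei{0}{}{f}g(x).
\end{gather*}
Iterating formally produces the candidate $\phi^{(p)} = \sum_{i=0}^\infty \lambda^i\bigl(\cei{0}{}{f}\bigr)^{i+1}g$. A Picard scheme $\phi_{n+1} = \lambda\cei{0}{}{f}\phi_n + \cei{0}{}{f}g$, $\phi_0 \equiv 0$, yields exactly the partial sums.

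The main work is the convergence of this series in $C_{\bar\mu}[0,T]$, which is done in parallel to the argument in Theorem~\ref{re-03}. Because $\cei{0}{}{f}$ is positivity-preserving and $|g|\le\|g\|_{C[0,T]}\I$, the estimate \eqref{re-e06} immediately upgrades to
\begin{gather*}
\left|\bigl(\cei{0}{}{f}\bigr)^{i+1} g(x)\right|
\le \|g\|_{C[0,T]}\,\frac{1}{(1-q)^{i+1}}\bigl(\rli{0}{}{f}\bigr)^{i}K(x),
\end{gather*}
so the Laplace-transform bookkeeping at the end of the proof of Theorem~\ref{re-03} (dominating the partial sums on $L^1(e^{-s_0x}\,dx)$ by a geometric series in $|\lambda|/[(1-q)f(s_0)]$ for $s_0$ large, then upgrading to uniform convergence on $[0,T]$ via monotonicity of $\rli{0}{}{f}$ and evaluation at the end-point $x=T$) carries over verbatim. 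The limit $\phi^{(p)}$ is therefore continuous and satisfies the fixed-point identity; applying Remark~\ref{cvp-23} to $\cei{0}{}{f}(\lambda\phi^{(p)}+g)$ shows $\phi^{(p)}\in C_{\bar\mu}[0,T]$, and applying $\ced{0}{}{f}$ to the fixed-point identity recovers \eqref{re-e08} using Theorem~\ref{cvp-09}.

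Uniqueness is immediate from linearity: if $\phi_1,\phi_2$ both solve \eqref{re-e08} in $C_{\bar\mu}[0,T]$, then $\psi:=\phi_1-\phi_2$ satisfies $\ced{0}{}{f}\psi=\lambda\psi$ with $\psi(0)=0$, and Theorem~\ref{re-03} forces $\psi\equiv 0$. The only real obstacle is transferring the intricate Cauchy-sequence argument from Theorem~\ref{re-03} to the new summand $\sum_i\lambda^i\bigl(\cei{0}{}{f}\bigr)^{i+1}g$, but this is purely bookkeeping since positivity of $\cei{0}{}{f}$ and the trivial bound $|g|\le\|g\|_{C[0,T]}\I$ reduce the new estimate to the old one.
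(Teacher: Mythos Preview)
Your proposal is correct and follows essentially the same route as the paper: uniqueness via linearity and Theorem~\ref{re-03}, convergence of the series by reducing to the estimates already established in the proof of Theorem~\ref{re-03} through positivity of $\cei{0}{}{f}$ and the bound $|g|\le\|g\|_{C[0,T]}\I$, and membership in $C_{\bar\mu}[0,T]$ via Remark~\ref{cvp-23}. The only cosmetic difference is that you split $\phi=\phi^{(h)}+\phi^{(p)}$ explicitly, whereas the paper treats both series at once and verifies the fixed-point identity $\lambda\,\cei{0}{}{f}\phi+\cei{0}{}{f}g=\phi-\phi_0$ directly.
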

\begin{proof} 
	The uniqueness follows from the linearity of the operator $\ced{0}{}{f}$ and the uniqueness of the homogeneous problem \eqref{re-e02}.
	
	Using the proof of Theorem \ref{re-03}, we can show that the series
	\begin{gather*}
		\sum_{i=0}^{\infty}\lambda^i\left(\cei{0}{}{f}\right)^{i} \I
		\quad \text{and}\quad
		\sum_{i=0}^{\infty}\lambda^i \left(\cei{0}{}{f}\right)^{i+1} g 
	\end{gather*} 
	converge (absolutely and) uniformly in $[0,T]$. Therefore, $\phi\in C[0,T]$. In particular,
\begin{align*}
	\lambda \cei{0}{}{f}\phi + \cei{0}{}{f}g
	&\pomu{(*)}{=}{}\lambda \phi_0 \cei{0}{}{f}\left[\sum_{i=0}^{\infty} \lambda^i \left(\cei{0}{}{f}\right)^i \I\right] 
	+ \lambda \cei{0}{}{f} \left[\sum_{i=0}^{\infty}\lambda^i \left(\cei{0}{}{f}\right)^{i+1} g\right] + \cei{0}{}{f}g\\
	&\omu{(*)}{=}{}  \phi_0 \sum_{i=0}^{\infty} \lambda^{i+1} \left(\cei{0}{}{f}\right)^{i+1}\I 
	+ \sum_{i=0}^{\infty}\lambda^{i+1} \left(\cei{0}{}{f}\right)^{i+2} g + \cei{0}{}{f}g\\
	&\pomu{(*)}{=}{}  \phi_0\sum_{i=1}^{\infty}\lambda^i \left(\cei{0}{}{f}\right)^{i}\I 
	+ \sum_{i=0}^{\infty}\lambda^{i} \left(\cei{0}{}{f}\right)^{i+1} g\\
	&\pomu{(*)}{=}{}\phi-\phi_0.
\end{align*}
In the step marked by (*) we use the continuity of the operator $\cei{0}{}{f}$ which is clear from its construction using the Picard scheme, i.e.\ the Banach fixed--point theorem. Theorem \ref{cvp-09} now shows that $\phi$  solves \eqref{re-e08}.
\end{proof}

\section{Construction of the censored process}\label{sp}

We will now give a probabilistic construction of the \textbf{censored process} $S^c = (S_t^c)_{t\geq 0}$. We will see in Section~\ref{pr} that $-\ced{0}{}{f}$ is the infinitesimal generator of $S^c$. Throughout this section, $S = (S_t)_{t\geq 0}$ is the subordinator (increasing L\'evy process) whose Laplace exponent is the Bernstein function $f$. We assume that $f$ satisfies the Assumptions~\ref{A1} and~\ref{A2}. The censored process will be constructed by the piecing-out procedure due to Ikeda, Watanabe \& Nagasawa~\cite{1966_Nobuyuki}. Roughly speaking, the paths $t\mapsto S_t^c$ are obtained from  $t\mapsto x-S_t$, $x>0$, by deleting (i.e.\ ``censoring'') all jumps that make the path negative.

To make this rigorous, let $S^n = (S_t^n)_{t\geq 0}$ be independent copies of $S$ and fix a starting point $x>0$. This also fixes the probability space $(\Omega,\mathscr{A},\Pp)$, which is large enough to accommodate $S$ along with the i.i.d.\ copies $S^1, S^2,\dots$. Note that under $\Pp$ we have $S_0 = 0 = S^n_0$ for all $n\in\nat$. Pick a starting point $S_0^c = x\in (0,T]$. From $x$ we run $x-S^1_t$ until $\sigma_1$, the first exit time of the subordinator $S^1$ from $[0,x]$. Now we use $x-S^1_{\sigma_1-}\in (0,x)$ as the starting point of $-S^2$, and run the process $(x-S^1_{\sigma_1-}) - S^2_t$ until it exits $[0,x]$, i.e.\ until the stopping time $\sigma_2$, which is the first time such that $S^2_t > x-S^1_{\sigma_1-}$ etc. The stopping times $\tau_n := \sigma_1 + \dots + \sigma_n$, $n\in\nat$, are called the \textbf{censoring times}, and we set $\tau_0:=0$.

More formally, the censored decreasing subordinator $S^c$ is given by 
\begin{gather}\label{sp-e02}
S_t^c =
\left\{\begin{aligned}
    &x-S_t^1, && 0\leq t<\tau_1,  &&n=1, \\
    &S^c_{\tau_{n-1}-}-S^n_{t-\tau_{n-1}}, &&\tau_{n-1} \leq t<\tau_n, && n\geq 2, \\
    &\partial, &&t\geq \smash[b]{\tau_\infty := \sup_{n\in\nat}\tau_n,}
\end{aligned}\right.
\end{gather}
and the waiting times between two censoring times, $\sigma_n = \tau_n-\tau_{n-1}$, $n\in\nat$, are
\begin{gather}\label{sp-e04}\begin{aligned}
	\sigma_n 
	&= \inf\left\{t>0\::\: S_{t+\tau_{n-1}}^c \leq 0\right\}\\
	&= \inf\left\{t\geq 0\::\: S_t^n > S^c_{\tau_{n-1}}\right\}
	= E_n\left(S^c_{\tau_{n-1}}\right),
\end{aligned}\end{gather}
where $E_n$ is the generalized inverse of $t\mapsto y-S^n_t$, i.e.\  $E_n(y)=\inf\left\{t\geq 0\::\:S^n_t>y\right\}$, $y>0$. In particular, $\sigma_n$ and $\tau_n$ are stopping times. Observe that the censored process is continuous at the censoring times, i.e.\ $S^c_{\tau_n-} = S^c_{\tau_n}$, $n\in\nat$.

\begin{remark}\label{sp-02}
	Notice that we \textbf{kill} the process $S_t^c$ at $t=\tau_\infty$, i.e.\ we pick a cemetery point $\partial$ (usually from the one-point compactification of the state space $(0,T]$), define $S_t^c = \partial$ for all $t\geq\tau_\infty$ and, if needed, we extend all functions $\phi$ by setting $\phi(\partial):=0$.
\end{remark}

From \eqref{sp-e02} and \eqref{sp-e04} we see that $S^c_\bullet$ can be represented as $S^c_\bullet = \Psi(x-S^1_\bullet,-S^2_\bullet,-S^3_\bullet,\dots)$ where $\Psi$ is a suitable functional. In particular, we can define probability measures $\left(\Pp^x\right)_{x>0}$ via $\Pp^x\left(S^c_\bullet\in\Gamma\right) := \Pp\left(\Psi(x-S^1_\bullet,-S^2_\bullet,-S^3_\bullet,\dots)\in \Gamma\right)$ ($\Gamma$ is a cylinder set in $\real^{[0,\infty)}$) where $x>0$ is the starting point of $S^c$, i.e.\ $\Pp^x\left(S_0^c = x\right)=1$. We will switch between $\Pp^x$ and $\Pp$ as appropriate.

\begin{theorem}\label{sp-03}
	The censored process $S^c$ is a strong Markov process.
\end{theorem}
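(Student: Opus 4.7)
The plan is to invoke the piecing-out theorem of Ikeda, Watanabe and Nagasawa~\cite{1966_Nobuyuki}: if a strong Markov process is stopped at a terminal time and then restarted, via a measurable kernel, with a fresh independent copy of itself, the resulting process is again strong Markov. In our setting the ``base'' strong Markov process is the decreasing L\'evy process $t\mapsto y-S_t$ on $\real$ started at $y$, the terminal time is its first exit time from $(0,\infty)$, and the restart kernel is the deterministic map $y\mapsto \delta_{S^c_{\tau_n-}}$, which depends on the past only through the pre-jump position. With these ingredients the hypotheses of the piecing-out theorem are satisfied, and the conclusion gives the strong Markov property of $S^c$; along the way one also has to note that killing at $\tau_\infty$ does not destroy strong Markovianity, since one simply extends the state space by the cemetery point $\partial$ and the paths remain c\`adl\`ag on $[0,\tau_\infty)$ by construction.

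For a self-contained verification I would fix a bounded $(\mathscr{F}_t)$-stopping time $\rho$ (where $(\mathscr{F}_t)$ is the natural completed right-continuous filtration of $S^c$) and, for every bounded cylindrical functional $F$, establish
\begin{gather*}
	\Ee\bigl[F\bigl((S^c_{\rho+t})_{t\geq 0}\bigr)\,\big|\,\mathscr{F}_\rho\bigr]
	= \Ee^{S^c_\rho}\bigl[F\bigl((S^c_t)_{t\geq 0}\bigr)\bigr]
\end{gather*}
by decomposing $\Omega = \{\rho\geq\tau_\infty\}\sqcup\bigsqcup_{n\geq 0}\{\tau_n\leq\rho<\tau_{n+1}\}$. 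On the cemetery event $\{\rho\geq\tau_\infty\}$ the identity is trivial because $S^c_{\rho+t}=\partial$ for all $t\geq 0$. On $\{\tau_n\leq\rho<\tau_{n+1}\}\in\mathscr{F}_\rho$, the increment $\rho-\tau_n$ is a stopping time for the L\'evy process $S^{n+1}$, and the strong Markov property of $S^{n+1}$ together with the fact that the copies $S^{n+2},S^{n+3},\dots$ have not yet been consumed and remain independent of $\mathscr{F}_\rho$ identify the conditional law of the post-$\rho$ trajectory with the law of the censored process freshly started from $S^c_\rho$.

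The main technical hurdle is to make the informal clause ``$S^{n+2},S^{n+3},\dots$ are independent of $\mathscr{F}_\rho$'' rigorous. This amounts to showing that, on the event $\{\tau_n\leq\rho<\tau_{n+1}\}$, the $\sigma$-algebra $\mathscr{F}_\rho$ is generated by the stopped paths of $S^1,\dots,S^{n+1}$ run up to the times $\sigma_1,\dots,\sigma_n$ and $\rho-\tau_n$, respectively, together with the censoring times $\tau_1,\dots,\tau_n$, all of which are functionals of $S^1,\dots,S^{n+1}$ only. Once this measurability is in place, the independence of the remaining i.i.d.\ copies holds by construction of the underlying product probability space, and the strong Markov property of the L\'evy process $S^{n+1}$ disposes of the remaining increment. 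An alternative route that avoids the bookkeeping is to appeal directly to \cite[Thm.~1.1]{1966_Nobuyuki}, whose hypotheses are tailored precisely to piecing-out constructions of the form \eqref{sp-e02}--\eqref{sp-e04}.
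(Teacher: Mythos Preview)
Your proposal is correct and close in spirit to the paper's argument, but the execution differs in a way worth recording. The paper verifies the one-time identity $\Ee^x[\phi(S^c_{\eta+s})\mid\Fcal_\eta]=\Ee^{S^c_\eta}[\phi(S^c_s)]$ by a \emph{double} decomposition: it partitions $\Omega$ into the sets $A_n\cap B_i$ with $A_n=\{\tau_{n-1}\leq\eta<\tau_n\}$ and $B_i=\{\tau_{i-1}\leq\eta+s<\tau_i\}$, then treats the cases $n=i$ and $n<i$ separately, the latter via a telescoping expansion $S^c_{\eta+s}=S^c_\eta-(\hat S^n_{(\tau_n-\eta)-}+S^{n+1}_{\sigma_{n+1}-}+\dots+S^i_{\eta+s-\tau_{i-1}})$ of the displacement into independent pieces. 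You instead decompose only by the location of $\rho$ and argue in one stroke that on $\{\tau_n\leq\rho<\tau_{n+1}\}$ the entire post-$\rho$ path is distributed as a fresh censored process started at $S^c_\rho$, using the strong Markov property of $S^{n+1}$ and the independence of the unused copies $S^{n+2},S^{n+3},\dots$. Your route is conceptually cleaner and yields the path-level strong Markov property directly; the paper's route is more explicit and sidesteps the filtration-measurability bookkeeping you flag as the ``main technical hurdle'' by computing on each $A_n\cap B_i$ separately. Your alternative of citing \cite{1966_Nobuyuki} directly is exactly what the paper records, after its hands-on proof, in Remark~\ref{sp-05}.

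One small point: your claim that ``$\rho-\tau_n$ is a stopping time for the L\'evy process $S^{n+1}$'' needs the filtration of $S^{n+1}$ to be enlarged by the (independent) $\sigma$-algebra generated by $S^1,\dots,S^n$, since $\tau_n$ is a functional of the latter; this is standard but should be said.
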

\begin{proof}
Consider the natural filtration $(\Fcal_t)_{t\geq 0}$ of $S^c$ and let $\eta$ be a stopping time. Pick $T>0$ and $x\in (0,T]$. We have to show that for all bounded measurable functions $\phi:(0,T]\to\real$
\begin{equation}\label{sp-e06}
	\Ee^x\left[\phi(S^c_{\eta+s}) \mid \Fcal_\eta\right]
	= \Ee^{S^c_\eta}\left[\phi(S^c_{s})\right], \quad s>0.
\end{equation}
Set $A_n := \left\{\tau_{n-1}\leq \eta <\tau_n\right\}$ and $B_i := \left\{\tau_{i-1}\leq\eta+s<\tau_i\right\}$. Clearly, $\Omega = \bigcup_{n=1}^\infty\bigcup_{i=n}^\infty A_n\cap B_i$ is a partitioning of $\Omega$ with mutually disjoint sets. Therefore, it is enough to consider \eqref{sp-e06} on $A_n\cap B_i$.

\medskip\noindent
\emph{Case~1: $n=i$.} We have 
\begin{align*}
	&\Ee^x\left[\phi(S^c_{\eta+s}) \I_{A_n}\I_{B_i} \mid \Fcal_\eta\right]\\
	&\quad\pomu{(*)}{=}{} \Ee^x\left[\phi\left(S^c_{\tau_{n-1}}-S^n_{\eta-\tau_{n-1}} + \left(S^n_{\eta-\tau_{n-1}}-S^n_{\eta+s-\tau_{n-1}}\right)\right) \I_{B_i} \I_{A_n} \mid \Fcal_\eta\right]\\
	&\quad\omu{(*)}{=}{} \Ee^x\left[\phi\left(S^c_{\eta}+ \left(S^n_{\eta-\tau_{n-1}}-S^n_{\eta+s-\tau_{n-1}}\right)\right) \I_{B_i}\I_{A_n} \mid \Fcal_\eta\right]\\
	&\quad\pomu{(*)}{=}{} \Ee^x\left[\phi\left(S^c_{\eta} - \left(S^n_{\eta+s-\tau_{n-1}}-S^n_{\eta-\tau_{n-1}}\right)\right) \I_{B_i} \mid \Fcal_\eta\right] \I_{A_n}\\
	&\quad\omu{(*)}{=}{} \Ee^{S^c_\eta}\left[ \phi\left(S_s^c\right) \I_{B_i}\right]\I_{A_n}.
\end{align*}
In the steps  marked with (*) we use the following facts:
\begin{itemize}\itemsep=3pt
	\item 
		$S^c_{\tau_{n-1}-}-S^n_{\eta-\tau_{n-1}} = S^c_\eta$ on $A_n$;
	\item 
		$\Ee^x\left[g(X, Y)|\Fcal_\eta\right]=\Ee^x\left[g(z, Y)\right]|_{z=X}$ if $g$ is bounded and measurable, $X$ is $\Fcal_\eta$-measurable and $Y$ is independent of $\Fcal_\eta$;
	\item 
		$S^n_{\eta-\tau_{n-1}}-S^n_{\eta+s-\tau_{n-1}}\sim -S^n_s\sim -S^1_s$ and $x-S^1_s = S^c_s$ $\Pp^x$-a.s.\ on $A_n\cap B_i$.
\end{itemize}

\smallskip\noindent
\emph{Case~2: $n<i$.} Using a telescoping argument we see that on $A_n\cap B_i$
\begin{align*}
	S^c_{\eta+s} 
	&= S^c_\eta + \left(S^c_{\tau_n}-S_\eta\right) + \left(S^c_{\tau_{n+1}}-S^c_{\tau_n}\right)
	+ \dots + \left(S^c_{\tau_{i-1}}-S^c_{\tau_{i-2}}\right) + \left(\smash{S^c_{\eta+s}}-S^c_{\tau_{i-1}}\right)\\
	&= S^c_\eta - \left(S^n_{\sigma_n-}-S^n_{\eta-\tau_{n-1}}\right) - S^n_{\sigma_{n+1}-} - \dots - S^{i-1}_{\sigma_{i-1}-} - S^i_{\eta+s-\tau_{i-1}}\\
	&\sim S^c_\eta - \left(\smash{\hat S^n_{(\tau_n-\eta)-}} + S^{n+1}_{\sigma_{n+1}-} + \dots + S^{i-1}_{\sigma_{i-1}-} + \smash{S^i_{\eta+s-\tau_{i-1}}}\right),
\end{align*}
where $\hat S^n$ is an independent copy of $S^n$. The expression in brackets is independent of $S^c_\eta$ and, on $A_i\cap B_n$, it has the same law as $S_s^c$ because of the construction of the censored process. Therefore,
\begin{align*}
	&\Ee^x\left[\phi(S^c_{\eta+s}) \I_{A_n}\I_{B_i} \mid \Fcal_\eta\right] \\
	&=\Ee^x\left[\phi\left(S^c_\eta - \big(\hat S^n_{(\tau_n-\eta)-} + S^{n+1}_{\sigma_{n+1}-} + \dots + S^{i-1}_{\sigma_{i-1}-} + S^i_{\eta+s-\tau_{i-1}}\big)\right) \I_{A_n}\I_{B_i} \mid \Fcal_\eta\right]\\
	&=\Ee^x\left[\phi\left(S^c_\eta - \big(\hat S^n_{(\tau_n-\eta)-} + S^{n+1}_{\sigma_{n+1}-} + \dots + S^{i-1}_{\sigma_{i-1}-} + S^i_{\eta+s-\tau_{i-1}}\big)\right) \I_{B_i} \mid \Fcal_\eta\right] \I_{A_n} \\
	&=\Ee^{S^c_\eta}\left[\phi\left(S^c_s\right) \I_{B_i}\right]\I_{A_n}.
\end{align*}
In the last step we argue as in Case~1.
\end{proof}

\begin{remark}\label{sp-05} 
	Using methods from probabilistic potential theory, see \cite[Thm.~14.8]{1988_Sharpe}, one can give another construction of the censored process. This construction requires the transfer kernel $K(S_{\tau_n}^c, dy) := \delta_{S_{\tau_n-}^c}(dy)$, where $\delta$ is the Dirac measure.
\end{remark}

Recall that ${f(\lambda)}^{-1}$ is the potential function of the subordinator $S=(S_t)_{t\geq 0}$. Since ${f(\lambda)}^{-1} = \Lscr\left[k;\lambda\right]$, the potential measure of $S$ is given by $U(dx) = k(x)\,dx$. The next lemma connects the potential measure and the kernels $k_1(x, r)=\bar\mu(r)k(x-r)$ and $k_i(x,r)$, $i\geq 2$, from Definition \ref{cvp-07} with (properties of) the censored process.
\begin{lemma}\label{sp-07}
	Let $(\bar\mu,k)$ be a positive Sonine pair and $\Lscr(\bar\mu; \lambda)=f(\lambda)/\lambda$, $\Lscr(k; \lambda)=1/f(\lambda)$, where $f$ is a Bernstein function satisfying the Assumptions~\ref{A1} and~\ref{A2}.  Moreover, assume that $\limsup_{x\to 0}\bar{\mu}(x)\int_0^x k(s)\,ds<1$.
	Fix $x>0$ and assume that $S_0^c=x$. For every $n\in\nat$ one has 
	\begin{enumerate}[label=\upshape\alph*),ref=\upshape\alph*),leftmargin=*,itemsep=4pt,itemindent=\parindent]
	\item\label{sp-07-a}
		$\Ee^x[\tau_n]<\infty$, $\Pp^x\left[S_{\tau_n}^c\in (0,x)\right]=1$ and $S_{\tau_n}^c$ has the probability density $k_n(x,\cdot)$;
	\item\label{sp-07-b}  
		$\Ee^x [\sigma_{n+1}] = \Ee^x\left[E_{n+1}(S_{\tau_n}^c)\right] = \int_0^x k_n(x,y)\,U(dy)$;
	\item\label{sp-07-c}
		$\tau_\infty:=\sup_{n\in\nat}\tau_n$ satisfies $\Ee^x\left[\tau_\infty\right]<\infty$, $\Pp^x\left[\tau_\infty<\infty\right]=1$ and $\Pp^x\left[S^c_{\tau_\infty-}=0\right] = 1$.
	\end{enumerate}
\end{lemma}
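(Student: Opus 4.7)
My plan for part~\ref{sp-07-a} is induction on $n$. For $n=1$, the classical first-passage (undershoot) identity for a pure-jump subordinator gives
\[
\Pp\left[S^1_{\sigma_1-}\in dz\right] = k(z)\bar\mu(x-z)\,dz \quad\text{on }(0,x),
\]
and the change of variables $r=x-z$ shows $S^c_{\tau_1}=x-S^1_{\sigma_1-}$ has density $k_1(x,r)=\bar\mu(r)k(x-r)$. The Sonine identity $\bar\mu\ast k\equiv 1$ yields $\int_0^x k_1(x,r)\,dr=1$, i.e.\ $\Pp^x[S^c_{\tau_1}\in(0,x)]=1$. For the induction step I would invoke the strong Markov property (Theorem~\ref{sp-03}) at $\tau_{n-1}$: conditionally on $S^c_{\tau_{n-1}}=y$, the path from $\tau_{n-1}$ onward is an independent censored subordinator started at $y$, so $S^c_{\tau_n}$ has conditional density $k_1(y,\cdot)$. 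Averaging against the inductive density recovers the convolution $k_n(x,r)=\int_r^x k_1(x,s)k_{n-1}(s,r)\,ds$ from Definition~\ref{cvp-07}; iterating $\Kcal\I=\I$ shows $\Kcal^n\I=\I$, so $k_n(x,\cdot)$ is a probability density.

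For part~\ref{sp-07-b} the key classical input is the renewal identity $\Ee[E(y)]=U([0,y])=K(y)$, which follows by Fubini from the occupation formula $\Ee\int_0^\infty \I_{[0,y]}(S_t)\,dt=U([0,y])$ together with $\int_0^\infty \I_{\{S_t\le y\}}\,dt=E(y)$ a.s.\ for monotone paths. Independence of $S^{n+1}$ from $\Fcal_{\tau_n}$ and part~\ref{sp-07-a} then give
\[
\Ee^x[\sigma_{n+1}]=\Ee^x\bigl[U([0,S^c_{\tau_n}])\bigr]=\int_0^x U([0,y])\,k_n(x,y)\,dy,
\]
which is the claimed formula once $\int_0^x k_n(x,y)\,U(dy)$ is read in its integrated sense (via Fubini applied to $U([0,y])=\int_0^y U(dz)$). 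Finiteness of $\Ee^x[\tau_n]$ for each fixed $n$ is immediate by summing finitely many such terms, each bounded by $K(x)<\infty$.

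Part~\ref{sp-07-c} combines (a), (b) and Lemma~\ref{cvp-17}. With $K(y):=\int_0^y k(s)\,ds$, applying Lemma~\ref{cvp-17} to $\phi=K$ with $M=1$ gives $\Kcal^n K(x)\le q^n K(x)$, where $q<1$ by Corollary~\ref{cvp-19}. Since $\Ee^x[\sigma_{n+1}]=\int_0^x K(y)k_n(x,y)\,dy=\Kcal^n K(x)$,
\[
\Ee^x[\tau_\infty]=\sum_{n=0}^\infty \Ee^x[\sigma_{n+1}]\le \sum_{n=0}^\infty q^n K(x)=\frac{K(x)}{1-q}<\infty,
\]
and hence $\Pp^x[\tau_\infty<\infty]=1$. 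For $S^c_{\tau_\infty-}=0$ a.s., I would exploit that $(S^c_{\tau_n})_n$ is non-increasing and $\ge 0$, hence converges a.s.\ to some $L\ge 0$; if $\Pp^x[L>0]>0$, then on $\{L>0\}$ one has $K(S^c_{\tau_n})\ge K(L)>0$ for every $n$, forcing $\tau_\infty=\infty$ on this event and contradicting the finiteness just established. Hence $L=0$ a.s., and monotonicity of $S^c$ gives $S^c_{\tau_\infty-}=\lim_n S^c_{\tau_n}=0$ a.s. The main technical hurdle is the rigorous use of the strong Markov property at each $\tau_n$ together with the overshoot/undershoot formula to identify the one-step transition density; once these are in place, the remaining estimates reduce to bookkeeping against the contraction bound of Lemma~\ref{cvp-17}.
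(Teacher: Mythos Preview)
Your proposal is correct and, for parts~\ref{sp-07-a}, \ref{sp-07-b} and the finiteness of $\Ee^x[\tau_\infty]$ in~\ref{sp-07-c}, follows essentially the same route as the paper: induction via the undershoot identity $\Pp(S_{\tau(x)-}\in dy,\,S_{\tau(x)}\in dz)=U(dy)\,\mu(dz-y)$, the renewal identity $\Ee[E(y)]=U(0,y)=K(y)$, and the geometric bound $\Kcal^n K\le q^n K$ from Lemma~\ref{cvp-17}/Corollary~\ref{cvp-19}.

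Where you differ is in the proof that $S^c_{\tau_\infty-}=0$ a.s. The paper uses Markov's inequality with the increasing function $K$ to show $\Pp^x(S^c_{\tau_i}>1/n)\le K(1/n)^{-1}\int_0^x k_i(x,y)K(y)\,dy\to 0$ as $i\to\infty$ (the terms are summable). Your contradiction argument via $\Ee^x[\tau_\infty]<\infty$ is more conceptual and works, but the sentence ``$K(S^c_{\tau_n})\ge K(L)>0$ \dots\ forcing $\tau_\infty=\infty$ on this event'' is not quite a proof: $K(S^c_{\tau_n})=\Ee^x[\sigma_{n+1}\mid\Fcal_{\tau_n}]$ is a conditional expectation, not $\sigma_{n+1}$ itself, so a lower bound on it does not immediately give pathwise divergence. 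The cleanest fix is to stay in expectation: since $S^c_{\tau_n}\ge L$ and $K$ is increasing,
\[
\Ee^x[\tau_\infty]=\sum_{n\ge 0}\Ee^x\bigl[K(S^c_{\tau_n})\bigr]\ge \sum_{n\ge 0}\Ee^x[K(L)],
\]
and if $\Pp^x[L>0]>0$ then $\Ee^x[K(L)]>0$, contradicting $\Ee^x[\tau_\infty]<\infty$ directly. Alternatively, argue pathwise on $\{L>\epsilon\}$: there $\sigma_{n+1}=E_{n+1}(S^c_{\tau_n})\ge E_{n+1}(\epsilon)$, and the $E_{n+1}(\epsilon)$ are i.i.d.\ with mean $K(\epsilon)>0$, so $\sum_n E_{n+1}(\epsilon)=\infty$ a.s., whence $\tau_\infty=\infty$ on $\{L>\epsilon\}$, again a contradiction.
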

\begin{proof}
\ref{sp-07-a} We use induction. If $n=1$, then 
\begin{gather*}
	\Ee^x[\tau_1]
	= \Ee[E_1(x)]
	= U(0,x)
	= \int_0^x k(y)\,dy
	<\infty,
\end{gather*} 
where we use that $\tau_1 = E_1(x) = \inf\left\{s\geq 0\::\: x-S^1_s<0\right\}$ and $\Ee\left[E_1(x)\right]=U(0,x)$, see \cite[Ch.~I.4]{1996_Bertoin}. From  \cite[Prop.~III.2]{1996_Bertoin} we know that  
\begin{gather*}
	\Pp\left(S_{\tau(x)-}\in dy,\; S_{\tau(x)}\in dz\right)
	= U(dy)\,\mu(dz-y),
\end{gather*} 
where $\tau(x) = \inf\left\{t\geq 0\::\:S_t\geq x\right\}$. Since $\tau_1=E_1(x)=\tau(x)$ under $\Pp^x$, we get for $0\leq a\leq x$,
\begin{align*}
	\Pp^x\left(S^c_{\tau_1}\in (a, x]\right)
	&=\Pp\left(x-S_{\tau(x)-}\in (a, x]\right)\\
	&=\Pp\left(x-S_{\tau(x)-}\in (a, x], S_{\tau(x)}\geq x\right)\\
	&=\Pp\left(S_{\tau(x)-}\in [0, x-a), S_{\tau(x)}\geq x\right)\\
	&=\int_0^{x-a}\bar\mu(x-y)k(y)\,dy.
\end{align*}
This shows that $k_1(x,r) = \bar\mu(x-y)k(y)$ is the density of $S^c_{\tau_1-}$ under $\Pp^x$. In particular, $S^c_{\tau_1}\in (0,x)$ holds $\Pp^x$-almost surely.

Assume that the assertions stated in \ref{sp-07-a} hold for some $n\geq 1$. By construction,
\begin{gather*}
	\sigma_{n+1}
	=\inf \left\{r\geq 0\::\: S^c_{\tau_n}<S^{n+1}_{r}\right\}
	=E_{n+1}\left(S^c_{\tau_n}\right),
\end{gather*}
and $S^c_{\tau_n}$ is independent of $S^{n+1}$. Using $S^c_{\tau_n}<x$ and $E_{n+1}(x)\sim E_1(x)$, we see
\begin{gather*}
	\Ee^x\left[\tau_{n+1}\right]
	= \Ee^x\left[E_{n+1}(S^c_{\tau_n})\right] + \Ee^x\left[\tau_n\right]
	\leq \Ee\left[E_{n+1}(x)\right] + \Ee^x\left[\tau_n\right]
	= U(0,x) + \Ee^x\left[\tau_n\right]
	<\infty.
\end{gather*}  
Next we show that $S_{\tau_{n+1} }^c\in (0,x)$. Using the definition of $\sigma_{n+1}$, we have 
\begin{gather*}
	S_{\tau_{n+1}}^c
	= S_{\tau_n}^c - S^{n+1}_{\sigma_{n+1}-}
	= S_{\tau_n}^c -S^{n+1}_{E_{n+1}(S^c_{\tau_n})-} \in (0, S^c_{\tau_n}) \subset (0,x).
\end{gather*}
Finally, we show that $S_{\tau_{n+1} }^c$ has the probability density $k_{n+1}(x,\cdot)$. For any bounded measurable $\phi$, we have 
\begin{align*}
	\Ee^x \left[\phi\left(S_{\tau_{n+1}}^c\right)\right]
	&= \Ee^x \left[\phi\left(S_{\tau_n }^c - S^{n+1}_{E_{n+1}(S^c_{\tau_n})-}\right)\right]\\
	&= \int_0^x \Ee \left[\phi\left(y-S^{n+1}_{E_{n+1}(y)-}\right)\right] \Pp^x(S_{\tau_n }^c\in dy)\\
	&= \int_0^x \Ee\left[\phi\left(y-S^{n+1}_{E_{n+1}(y)-}\right)\right] k_n(x,y)\,dy\\
	&= \int_0^x \Ee^y\left[\phi\left(S^{c}_{E_{1}(y)}\right)\right]k_n(x,y)\,dy\\
	&= \int_0^x\int_0^y \phi(z)k_1(y,z)\,dz \: k_n(x,y)\,dy\\
	&= \int_0^x \phi(z)\int_z^x k_n(x,y)k_1(y,z)\,dy\,dz.
\end{align*}
In this calculation we use that the subordinators $S^n$ are i.i.d., $S^c_{E_1}(y)$ has under $\Pp^y$ the density $k_1(y,\cdot)$, and $S^c_{\tau_n}$ is independent of $S^{n+1}$. Finally, it follows from Definition \ref{cvp-07} that $S_{\tau_n}^c$ has the density $k_{n+1}(x,\cdot)$.

\medskip\noindent
\ref{sp-07-b} Using the results of Part~\ref{sp-07-a}, we have $\Ee^x\left[\sigma_{n+1}\right] = \Ee^x\left[E_{n+1}(S^c_{\tau_n})\right]$. Since $S^c_{\tau_n}$ and $S^{n+1}$ are independent, we have
\begin{align*}
	\Ee^x\left[\sigma_{n+1}\right]
	&=\Ee^x\left[E_{n+1}(S^c_{\tau_n })\right]\\
	&=\Ee\left[\int_0^xE_{n+1}(y)k_n(x,y)\,dy\right]\\
	&=\int_0^x\Ee\left[E_{n+1}(y)\right]k_n(x,y)\,dy\\
	&=\int_0^x U(0,y)k_n(x,y)\,dy.
\end{align*}

\medskip\noindent
\ref{sp-07-c} We show that $\Ee^x\left[\tau_\infty\right]<\infty$, which implies $\Pp^x\left(\tau_\infty<\infty\right)=1$. By monotone convergence,
\begin{gather*}
	\Ee^x \left[\tau_\infty\right]
	= \sum_{i=1}^\infty \Ee^x\left[\sigma_i\right]
	= \sum_{i=1}^\infty \int_0^x U(0,y) k_{i-1}(x,y)\,dy
	=\sum_{i=1}^\infty \Kcal^{i-1} U(0,x)<\infty.
\end{gather*}
For the last inequality, we use the definition of $U(0,x)=\int_0^x k(y)\,dy$ and Corollary~\ref{cvp-19}.

Next we show $\Pp^x\left(S^c_{\tau_\infty-}>0\right)=0$. We have
\begin{gather*}
	\Pp^x\left(S^c_{\tau_\infty-}>0\right)
	\leq \sum_{n=1}^\infty \Pp\left(S^c_{\tau_\infty-}>\tfrac1{n}\right). 
\end{gather*}
Since $\tau_i\leq\tau_\infty$, for every $i\in\nat$ the inclusion $\left\{S^c_{\tau_{\infty}-}>\frac1{n}\right\}\subset \left\{S^c_{\tau_i}>\frac1{n}\right\}$ holds; hence 
\begin{gather*}
	\Pp^x\left(S^c_{\tau_\infty-}>\tfrac1{n}\right)
	\leq \Pp^x\left(\bigcap_{i=1}^\infty\left\{S^c_{\tau_i}>\tfrac1{n}\right\}\right)
	= \lim_{i\to\infty}\Pp^x\left(S^c_{\tau_i}>\tfrac1{n}\right).
\end{gather*}
Since $K(y):=\int_0^y k(s)\,ds$ is increasing and satisfies $K(0)=0$, the Markov inequality shows that 
\begin{align*}
	\Pp^x\left(S^c_{\tau_i}>\tfrac1{n}\right)
	=\Pp^x\left(K(S^c_{\tau_i})\geq K\left(\tfrac1{n}\right)\right)
	&\leq \frac 1{K\left(\tfrac1{n}\right)} \Ee^x\left[K(S^c_{\tau_i})\right]\\
	&= \frac 1{K\left(\tfrac1{n}\right)} \int_0^x k_i(x,y) K(y)\, dy 
	\xrightarrow[i\to \infty]{}0.
\end{align*} 
Indeed, since $\sum_{i=1}^\infty\int_0^x k_i(x,y)K(y)\, dy = \sum_{i=1}^\infty\int_0^x k_i(x,y)\int_0^y k(s)\,ds\, dy < \infty$, cf.~Lemma~\ref{cvp-19}, we see that $\int_0^x k_i(x,y)K(y)\, dy\to 0$ as $i\to\infty$.
\end{proof}

\section{Probabilistic representation}\label{pr}

In this section we identify the generator of the censored process $S^c$. We continue to use the notation introduced in the previous section; in particular, $S^n$ are i.i.d.\ copies of the subordinator $S$ with the Bernstein function $f$, which satisfies the Assumptions~\ref{A1} and \ref{A2}. Moreover, we assume that $\limsup_{x\to 0}\bar\mu(x)\int_0^x k(y)\,dy<1$. By $\tau_n$ we denote the censoring times, $(\bar\mu,k)$ is the positive Sonine pair relating to $f$ and $U(dx) = k(x)\,dx$ is the potential measure of $S$.

\begin{lemma}\label{pr-03}  
	Let $S=(S_t)_{t\geq 0}$ be the subordinator with Bernstein function $f$ satisfying~\ref{A1} and~\ref{A2} and $T>0$. Then one has for all measurable functions $g:(0,T]\to [0,\infty)$ and $x\in (0,T]$
	\begin{equation}\label{pr-e02}
		\rli{0}{}{f} g(x)
		= \Ee\left[\int_0^{\tau_1} g(x-S_t)\,dt\right]
	\end{equation}
\end{lemma}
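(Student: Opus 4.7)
The identity \eqref{pr-e02} is essentially a restatement of the fact that $k(y)\,dy$ is the potential measure of $S$, applied to the functional $\int_0^{\tau_1} g(x-S_t)\,dt$. My plan is to rewrite the expectation on the right-hand side so that the time integral is unrestricted (at the cost of inserting an indicator), then use Fubini to swap the expectation and the integral, and finally recognize the resulting measure as the potential measure of $S$.

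First I would observe that by definition of $\tau_1 = E_1(x) = \inf\{t\geq 0 : S_t > x\}$, one has $\{t < \tau_1\} = \{S_t \leq x\}$ up to a $dt\otimes\Pp$-null set (the subordinator $S$ is strictly increasing away from its drift and jump times, but since $\mu$ is infinite by Assumption~\ref{A1} and the drift is zero, $\Pp(S_t = x) = 0$ for each fixed $t$). Therefore
\begin{align*}
	\Ee\left[\int_0^{\tau_1} g(x-S_t)\,dt\right]
	= \Ee\left[\int_0^\infty g(x-S_t)\,\I_{\{S_t < x\}}\,dt\right].
\end{align*}
Since $g\geq 0$, Tonelli's theorem allows me to interchange the integral and the expectation, giving
\begin{align*}
	\Ee\left[\int_0^{\tau_1} g(x-S_t)\,dt\right]
	= \int_0^\infty \int_{[0,x)} g(x-y)\, \Pp(S_t \in dy)\,dt
	= \int_{[0,x)} g(x-y)\, U(dy),
\end{align*}
where $U(dy) = \int_0^\infty \Pp(S_t\in dy)\,dt$ is the potential measure of $S$.

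The last step is to identify $U(dy)$ with $k(y)\,dy$. This follows from $\Lscr[U;\lambda] = 1/f(\lambda) = \Lscr[k;\lambda]$ (the first equality is the classical formula for the potential measure of a subordinator, the second is Lemma~\ref{bd-11}\ref{bd-11-b}); uniqueness of the Laplace transform then gives $U(dy) = k(y)\,dy$. Substituting this back yields
\begin{align*}
	\Ee\left[\int_0^{\tau_1} g(x-S_t)\,dt\right]
	= \int_0^x g(x-y) k(y)\,dy
	= \rli{0}{}{f} g(x).
\end{align*}

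I do not anticipate any serious obstacle; the only point that deserves a moment's care is the justification that the boundary $\{S_t = x\}$ contributes nothing, so that the stopping time $\tau_1$ can be replaced by the indicator $\I_{\{S_t < x\}}$, but this is immediate from the fact that $S_t$ has no atoms under Assumption~\ref{A1}. Everything else is Fubini and the Laplace-transform identification of $U$ with $k$.
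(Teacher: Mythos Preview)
Your proposal is correct and follows essentially the same approach as the paper: both replace the random upper limit $\tau_1$ by an indicator, apply Tonelli to bring the expectation inside, and identify the resulting measure as the potential measure $U(dy)=k(y)\,dy$ via Laplace transforms. If anything, you are slightly more careful than the paper in justifying that the boundary event $\{S_t=x\}$ is negligible; this extra care is harmless but not strictly needed, since $U$ has density $k$ and hence $U(\{x\})=0$, so integrating over $[0,x)$ or $[0,x]$ gives the same value.
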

\begin{proof} 
Denote by $p_t(dy)$ the transition probability of $S_t$. Since $U(dy) = \int_0^\infty p_t(dy)\,dt$ (in the sense of vague convergence) and $x-S_t\sim p_t(x-dy)$, we have
\begin{align*}
	\Ee\left[\int_0^{\tau_1} g(x-S_t)\,dt\right]
	&= \Ee\left[\int_0^\infty g(-S_t) \I_{\{t\leq \tau_1\}}\,dt\right]\\
	&= \Ee\left[\int_0^\infty g(-S_t) \I_{\{x-S_t\leq 0\}}\,dt\right]\\
	&= \int_0^\infty\int_0^x  g(y) \Pp(x-S_t \in dy) \,dt\\
	&= \int_0^x \int_0^\infty g(y)\, p_t(x-dy) \,dt\\
	&= \int_0^x g(y)\, U(x-dy). 
\end{align*}
Since $U(dx)=k(x)\,dx$, the definition of the Bernstein--Riemann--Liouville integral shows 
\begin{gather*}
	\rli{0}{}{f} g(x) = \int_0^x g(y)k(x-y)\,dy = \Ee \left[ \int_0^{\tau_1} g(x-S_t) \, dt \right]. \qedhere
\end{gather*}
\end{proof}

\begin{theorem}\label{pr-05} 
	Let $(\bar\mu,k)$ be a positive Sonine pair and $\Lscr(\bar\mu; \lambda)=f(\lambda)/\lambda$, $\Lscr(k; \lambda)=1/f(\lambda)$, where $f$ is a Bernstein function satisfying the Assumptions~\ref{A1} and~\ref{A2}.  Moreover, assume that $\limsup_{x\to 0}\bar{\mu}(x)\int_0^x k(s)\,ds<1$.
	For $g\in C[0,T]$, and $x\in (0,T]$ the following representation for the censored fractional integral holds true
	\begin{equation}\label{pr-e04}
		\cei{0}{}{f}g(x)
		= \sum_{n=0}^\infty \Ee^x \left[\rli{0}{}{f}g(S^c_{\tau_n})\right]
		= \Ee^x \left[\int_0^{\tau_\infty}g(S^c_t)\,dt\right],
	\end{equation}
	where $\tau_\infty = \sup_{n\in\nat}$ and $\tau_1, \tau_2, \dots$ are the censoring times.
\end{theorem}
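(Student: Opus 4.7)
The plan is to combine the analytic series expansion of $\cei{0}{}{f}$ from Theorem~\ref{cvp-09} with the probabilistic identification of the iterated kernels $k_n(x,\cdot)$ as the distributions of the post-censoring values $S^c_{\tau_n}$ (Lemma~\ref{sp-07}\ref{sp-07-a}), and then to convert each summand into a time integral along the censored path via Lemma~\ref{pr-03} and the strong Markov property.

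For the first equality, Theorem~\ref{cvp-09} gives the representation $\cei{0}{}{f}g(x)=\sum_{n=0}^{\infty}\Kcal^n\bigl[\rli{0}{}{f}g\bigr](x)$. By Definition~\ref{cvp-07} and Lemma~\ref{sp-07}\ref{sp-07-a}, for $n\geq 1$ the operator $\Kcal^n$ is nothing but the transition operator of the discrete chain $(S^c_{\tau_n})_{n\geq 0}$:
\begin{equation*}
\Kcal^n\phi(x)=\int_0^x k_n(x,r)\phi(r)\,dr=\Ee^x\bigl[\phi(S^c_{\tau_n})\bigr],
\end{equation*}
and for $n=0$ we have $\tau_0=0$ and $S^c_0=x$ $\Pp^x$-almost surely, so $\Kcal^0\phi(x)=\phi(x)=\Ee^x[\phi(S^c_{\tau_0})]$. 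Applying this with $\phi=\rli{0}{}{f}g$ yields the first equality term by term.

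For the second equality I would decompose $\int_0^{\tau_\infty}g(S^c_t)\,dt=\sum_{n=0}^{\infty}\int_{\tau_n}^{\tau_{n+1}}g(S^c_t)\,dt$. On the interval $[\tau_n,\tau_{n+1})$, the construction \eqref{sp-e02}--\eqref{sp-e04} gives $S^c_t=S^c_{\tau_n}-S^{n+1}_{t-\tau_n}$ with $\tau_{n+1}-\tau_n=E_{n+1}(S^c_{\tau_n})$. Since $S^{n+1}$ is independent of $\Fcal_{\tau_n}$ and $S^c_{\tau_n}$ is $\Fcal_{\tau_n}$-measurable, the substitution $s=t-\tau_n$ followed by conditioning on $S^c_{\tau_n}$ gives
\begin{equation*}
\Ee^x\Bigl[\int_{\tau_n}^{\tau_{n+1}}g(S^c_t)\,dt\,\Big|\,\Fcal_{\tau_n}\Bigr]=h(S^c_{\tau_n}),\qquad h(y):=\Ee\Bigl[\int_0^{E(y)}g(y-S_s)\,ds\Bigr],
\end{equation*}
and Lemma~\ref{pr-03} identifies $h(y)=\rli{0}{}{f}g(y)$. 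Taking expectations gives $\Ee^x\bigl[\int_{\tau_n}^{\tau_{n+1}}g(S^c_t)\,dt\bigr]=\Ee^x\bigl[\rli{0}{}{f}g(S^c_{\tau_n})\bigr]$, which after summation yields the second equality.

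The one point requiring care is the interchange of summation and expectation, and this is the main (though mild) technical obstacle. For $g\geq 0$ it is immediate by Tonelli, and the resulting sum equals $\cei{0}{}{f}g(x)$, which is finite by Remark~\ref{cvp-23}. For general $g\in C[0,T]$, one splits $g=g^+-g^-$ and notes that since $S^c_t\in(0,x]\subset(0,T]$ for all $t<\tau_\infty$, $|g(S^c_t)|\leq\|g\|_{C[0,T]}$, so that $\Ee^x\bigl[\int_0^{\tau_\infty}|g(S^c_t)|\,dt\bigr]\leq\|g\|_{C[0,T]}\Ee^x[\tau_\infty]<\infty$ by Lemma~\ref{sp-07}\ref{sp-07-c}; Fubini then legitimises the interchange and the identity extends to signed $g$.
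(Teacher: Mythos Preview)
Your proposal is correct and follows essentially the same route as the paper's proof: both decompose $\int_0^{\tau_\infty}$ into the inter-censoring pieces $\int_{\tau_n}^{\tau_{n+1}}$, use the pathwise representation $S^c_t=S^c_{\tau_n}-S^{n+1}_{t-\tau_n}$ together with Lemma~\ref{pr-03} to identify each piece as $\rli{0}{}{f}g(S^c_{\tau_n})$, and then invoke Lemma~\ref{sp-07}\ref{sp-07-a} and Theorem~\ref{cvp-09} to match with $\sum_n\Kcal^n[\rli{0}{}{f}g]=\cei{0}{}{f}g$. The only cosmetic difference is that the paper runs the two equalities in a single chain, whereas you establish them separately; your justification of the signed case via the bound $\|g\|_{C[0,T]}\Ee^x[\tau_\infty]<\infty$ is a small but welcome addition over the paper's bare appeal to linearity.
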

\begin{proof} 
	Assume first that $g\geq 0$. Then we have
\begin{align*}
	\Ee^x\left[\int_0^{\tau_\infty}g\left(S^c_t\right)dt\right]
	&\pomu{(*)}{=}{} \sum_{n=0}^\infty \Ee^x\left[\int_{\tau_n}^{\tau_{n+1}}g\left(S^c_t\right)dt\right]\\
	&\pomu{(*)}{=}{} \sum_{n=0}^\infty \Ee^x \left[\int_0^{\tau_{n+1}-\tau_n}g\left(S^c_{u+\tau_n}\right)du\right]\\
	&\pomu{(*)}{=}{} \sum_{n=0}^\infty \Ee^x \left[\int_0^{E_{n+1}(S^c_{\tau_n})}g\left(S^c_{r+\tau_n}\right)dr\right]\\
	&\pomu{(*)}{=}{} \sum_{n=0}^\infty \Ee^x \left[\int_0^{E_{n+1}(S^c_{\tau_n})}g\left(S^{n+1}_r+S_{\tau_n}^c\right)dr\right]\\
	&\omu{(*)}{=}{}  \sum_{n=0}^\infty \Ee^x \left[\Ee^ {S_{\tau_n}^c} \left(\int_0^{E_{n+1}(S^c_{0})}g\left(S^{n+1}_u\right)du \right)\right].
\end{align*}
In the last step we use the strong Markov property of the censored process. Observe that $E_{n+1}(S^c_{\tau_n})=\tau_{n+1}$. Therefore, we can use  Lemma~\ref{pr-03} for $S=S^{n+1}$, Lemma~\ref{sp-07}.\ref{sp-07-a} and Theorem~\ref{cvp-09} to get
\begin{gather*}
	\Ee^x\left[\int_0^{\tau_\infty}g\left(S^c_t\right)dt\right]
	=  \sum_{n=0}^\infty \Ee^x\left[\rli{0}{}{f} g\left(S^c_{\tau_n}\right)\right]
	= \sum_{n=0}^\infty \Kcal^n \rli{0}{}{f} g(x) 
	= \cei{0}{}{f}g(x).
\end{gather*}
If $g\in C[0,T]$, we know from Theorem~\ref{cvp-09} that $\cei{0}{}{f}g(x)$ is finite, and we can use the above calculation for the positive and negative parts $g^\pm$ of $g$. The claim now follows from the linearity of the fractional integral.
\end{proof}

We can now identify the generator of the censored process. 
\begin{theorem}\label{pr-07} 
	Let $(\bar\mu,k)$ be a positive Sonine pair and $\Lscr(\bar\mu; \lambda)=f(\lambda)/\lambda$, $\Lscr(k; \lambda)=1/f(\lambda)$, where $f$ is a Bernstein function satisfying the Assumptions~\ref{A1} and~\ref{A2}.  Moreover, assume that $\limsup_{x\to 0}\bar{\mu}(x)\int_0^x k(s)\,ds<1$. The process $S^c=(S^c_t)_{t\geq 0}$ is a Feller process. 
	
	For any $T>0$ the semigroup induced by $S^c$ on the Banach space 
	$C_\infty(0, T]=\overline{C_c(0, T]}^{\|\cdot\|_{C[0,T]}} = \left\{u\in C(0, T]\,:\, u(0+)=0\right\}$ has the generator $\left(-\ced{0}{}{f}, \cei{0}{}{f}\left(C_\infty(0,T]\right) \subset C_{\bar\mu}(0,T]\right)$.
\end{theorem}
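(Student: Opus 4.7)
The plan is to set up the transition semigroup of $S^c$ probabilistically and then identify its resolvent with the solution operator from Theorem~\ref{re-05}. Set
\begin{gather*}
	T_t\phi(x) := \Ee^x\!\left[\phi(S^c_t)\I_{\{t<\tau_\infty\}}\right],\quad x\in(0,T],\ t\geq 0,\ \phi\in C_\infty(0,T],
\end{gather*}
with the cemetery convention $\phi(\partial)=0$. The semigroup property is immediate from Theorem~\ref{sp-03}, positivity and the contraction property are trivial.

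\textbf{Step 1 (Feller property).} First I would verify that $T_t$ preserves $C_\infty(0,T]$. The vanishing at $0$ is a direct consequence of Lemma~\ref{sp-07}\ref{sp-07-c}: since $\Ee^x[\tau_\infty] = \sum_{i=1}^\infty \Kcal^{i-1}U(0,x) \to 0$ as $x\to 0$, we get $\Pp^x(\tau_\infty>t)\to 0$, hence $|T_t\phi(x)|\leq \|\phi\|_\infty\Pp^x(\tau_\infty>t)\to 0$ as $x\to 0$. Continuity in $x$ is obtained by a pathwise coupling argument, running the piecing‑out construction simultaneously from nearby starting points and using right‑continuity of subordinator paths together with dominated convergence. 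Strong continuity $\|T_t\phi-\phi\|_{C_\infty}\to 0$ as $t\to 0$ follows from the right‑continuity of $t\mapsto S^c_t$ at $t=0$ and the uniform continuity of $\phi\in C_\infty(0,T]$, using that the first jump of $S^1$ becomes arbitrarily small uniformly in $x\in(0,T]$ on small time intervals.

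\textbf{Step 2 (Resolvent identification).} For $\lambda>0$ define $R_\lambda\phi(x) := \int_0^\infty e^{-\lambda t}T_t\phi(x)\,dt = \Ee^x[\int_0^{\tau_\infty} e^{-\lambda t}\phi(S^c_t)\,dt]$. Theorem~\ref{pr-05} identifies $R_0\phi = \cei{0}{}{f}\phi$. Iterating the resolvent equation $R_\mu-R_\lambda=(\lambda-\mu)R_\mu R_\lambda$ with $\mu=0$, or directly expanding $e^{-\lambda t}$ and applying the strong Markov property at the censoring times $\tau_n$, yields
\begin{gather*}
	R_\lambda\phi(x) = \sum_{i=0}^\infty (-\lambda)^i \bigl(\cei{0}{}{f}\bigr)^{i+1}\phi(x),
\end{gather*}
with uniform convergence on $[0,T]$ guaranteed by the estimate \eqref{re-e06} in the proof of Theorem~\ref{re-03}. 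This is exactly the unique solution, provided by Theorem~\ref{re-05} (applied with $\lambda$ replaced by $-\lambda$ and $\phi_0=0$), of the boundary value problem
\begin{gather*}
	\ced{0}{}{f}\psi(x) + \lambda\psi(x) = \phi(x),\quad x\in(0,T],\qquad \psi(0)=0.
\end{gather*}
Hence $R_\lambda\phi\in C_{\bar\mu}[0,T]$, $R_\lambda\phi$ vanishes at the origin so $R_\lambda\phi\in C_\infty(0,T]$, and $(\lambda+\ced{0}{}{f})R_\lambda=\mathrm{Id}$ on $C_\infty(0,T]$; conversely, $R_\lambda(\lambda+\ced{0}{}{f})=\mathrm{Id}$ on $\cei{0}{}{f}\bigl(C_\infty(0,T]\bigr)$ by using Theorem~\ref{bd-17} and the series. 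This determines the infinitesimal generator of $(T_t)$ uniquely as $-\ced{0}{}{f}$ with domain $\cei{0}{}{f}\bigl(C_\infty(0,T]\bigr)\subset C_{\bar\mu}(0,T]$.

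\textbf{Main obstacle.} The delicate point is Step~1, in particular the continuity of $x\mapsto T_t\phi(x)$, because the piecing‑out construction depends on the starting point through all of the random censoring times $\tau_n$. A clean fallback, should the coupling proof become cumbersome, is to bypass it entirely: the operator $-\ced{0}{}{f}$ together with the explicit resolvent $R_\lambda = \sum_i(-\lambda)^i(\cei{0}{}{f})^{i+1}$ from Theorem~\ref{re-05} satisfies the Hille--Yosida conditions on $C_\infty(0,T]$ (density of $\cei{0}{}{f}(C_\infty(0,T])$ follows from Theorem~\ref{bd-17}, and $\|\lambda R_\lambda\|\leq 1$ from the probabilistic representation together with $\Pp^x(\tau_\infty<\infty)=1$), so it generates a unique Feller semigroup, which must coincide with $(T_t)$ because their $0$‑resolvents agree by Theorem~\ref{pr-05}.
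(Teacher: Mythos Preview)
Your outline is correct and follows the paper's structure closely: Step~1 (strong continuity, vanishing at $0$, continuity in $x$ via coupling) matches the paper's Steps~1--2 almost verbatim, including the acknowledgment that the coupling for $x\mapsto T_t\phi(x)$ is the delicate part---the paper carries this out by introducing $\nu(x,y)=\inf\{n:\tau_n(x)\neq\tau_n(y)\}$ and using the explicit densities $k_{i-1}(y,\cdot)$ from Lemma~\ref{sp-07}. Your fallback via Hille--Yosida is exactly the content of Remark~\ref{pr-09}.

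The one place where the paper is more economical is the generator identification. You compute the full $\lambda$-resolvent as the Neumann series $R_\lambda=\sum_i(-\lambda)^i(\cei{0}{}{f})^{i+1}$ and match it with Theorem~\ref{re-05}; the paper instead uses only the $0$-potential: Theorem~\ref{pr-05} gives $R_0=\cei{0}{}{f}$, and since Theorem~\ref{cvp-09} shows $\ced{0}{}{f}$ is its inverse on $\cei{0}{}{f}(C_\infty(0,T])$, the generator is $-\ced{0}{}{f}$ immediately. Your resolvent computation is not wasted---it is precisely the content of the \emph{next} result, Theorem~\ref{pr-11}---but it is not needed for the present statement.
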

\begin{proof} 
Since $S^c$ is a Markov process, $P_t^c\phi(x):=\Ee^x[\phi(S_t^c)]$, $x\in (0,T]$ is a positivity preserving contraction semigroup on the Borel--measurable functions $\Bcal(0,T]$. We are going to show that $P_t^c$, $t>0$, is a Feller operator i.e.\ $P_t^c: C_\infty(0,T]\to C_\infty(0,T]$ and that $t\mapsto P_t^c \phi$ is strongly continuous.

\medskip\noindent
\emph{Step~1.} $(P_t^c)_{t\geq 0}$ is strongly continuous  on $C_\infty(0,T]$. Assume that $\phi\in C_\infty(0,T]$.
Then we have 
\begin{align*}
	|P_t^c\phi(x)-\phi(x)|
	&=\left|\Ee^x[\phi(S^c_t)]-\phi(x)\right|\\
	&\leq \left|\Ee^x\left[(\phi(S^c_t)-\phi(x))1_{\{t<\tau_1\}}\right]\right|+\left|\Ee^x \left[(\phi(S^c_t)-\phi(x))1_{\{t\geq \tau_1\}}\right]\right|\\
	&=\left|\Ee \left[(\phi(x-S^1_t)-\phi(x))1_{\{t<\tau_1\}}\right]\right|+\left|\Ee^x \left[(\phi(S^c_t)-\phi(x))1_{\{t\geq \tau_1\}}\right]\right|.
\end{align*}
Since $S^1$ is a subordinator, hence a Feller process, the first term vanishes uniformly in $x$ as $t\to 0$. The second term is further bounded as follows: 
\begin{align*}
	\left|\Ee^x \left[(\phi(S^c_t)-\phi(x))1_{\{t\geq \tau_1\}}\right]\right|
	&\leq 2\|\phi\|_{C[0,x]}\Pp^x\left[t\geq \tau_1\right]\\
	&= 2\|\phi\|_{C[0,x]}\Pp\left[t\geq E_1(x)\right]\\
	&= 2\|\phi\|_{C[0,x]}\Pp^0\left[S^1_t\geq x\right].
\end{align*}
Fix $\epsilon>0$. Since $\phi(0+)=0$, there is some $\delta$ such that for all $0\leq x\leq \delta$ we have $\|\phi\|_{C[0,x]}\leq \epsilon$. If $x>\delta$, we get $\lim_{t\to 0+}\Pp\left(S^1_t\geq x\right)=0$. This shows that
\begin{gather*}
	\|\phi\|_{C[0,x]}\Pp\left[x\leq S^1_t\right] 
	\leq
	\begin{cases} 
		\epsilon, &\text{if\ \ } 0\leq x\leq \delta, \\
		\epsilon \|\phi\|_{C[0,T]}, &\text{if\ \ } \delta<x\leq T. 
 	\end{cases}
\end{gather*}
Since $\epsilon>0$ is arbitrary, we conclude that $(P_t^c)_{t\geq 0}$ is strongly continuous.

\begin{figure}[t]
	\centering    
	\includegraphics[width=0.7\textwidth]{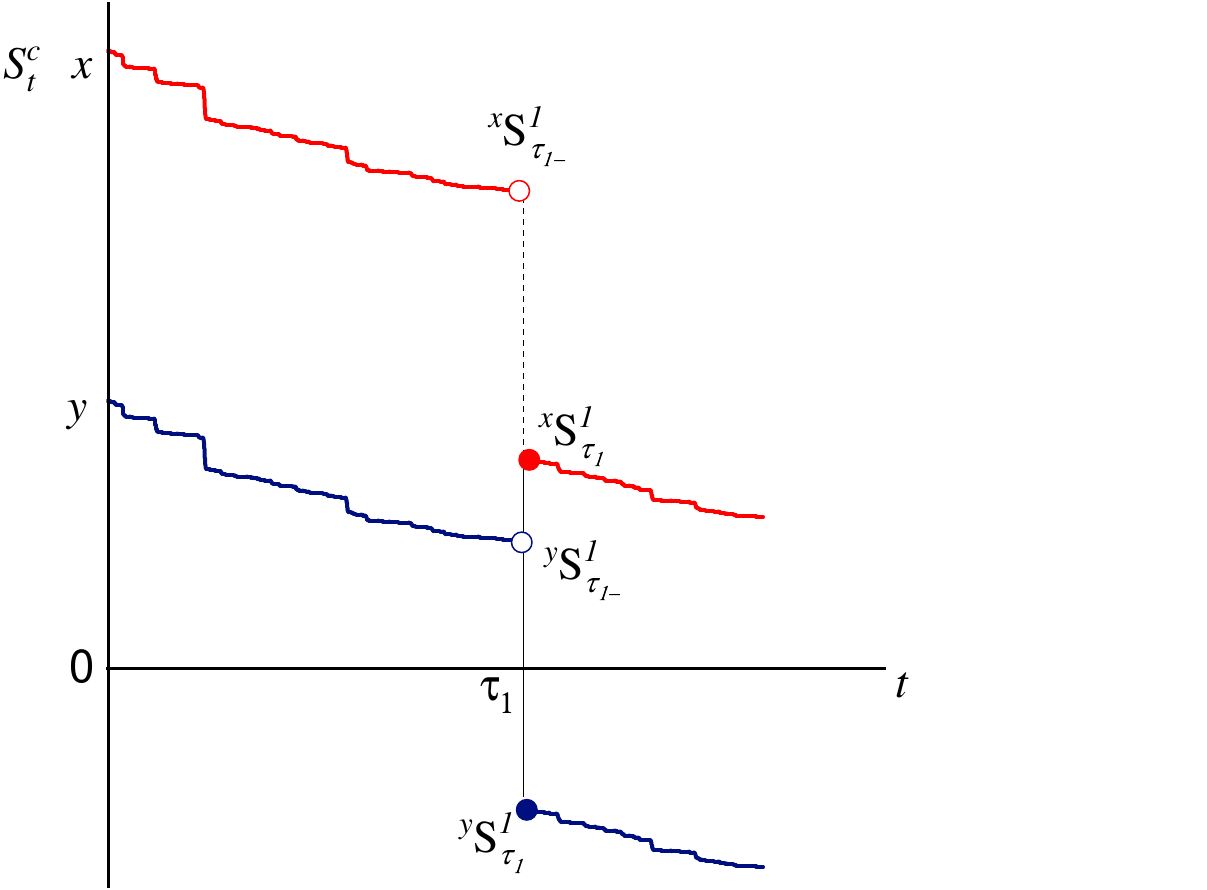}
	\caption{The processes ${}^xS^c_t - {}^yS^c_t$ ``march'' in parallel as long as they are not or always simultaneously censored. This changes at the first censoring event, where only one of the processes is censored. This situation is shown for $\tau_1$, where only ${}^yS^c$ is censored, hence the distance ${}^xS^c_{\tau_1} - {}^yS^c_{\tau_1}<x-y$.}\label{fig}
\end{figure}

\medskip\noindent
\emph{Step~2.} $P_t^c$ is Feller continuous, i.e.\ $P_t^c : C_\infty(0,T]\to C_\infty(0,T]$. We begin by showing that $(P_t^c \phi)(0+)=0$. Note that $\phi\in C_\infty(0,T]$ satisfies $\phi(0+)=0$. The calculations in Step~1  show 
\begin{gather*}
	\left| P_t^c \phi(x)\right|
	\leq |\phi(x)|+ \left|\Ee^0\left[\left(\phi(x-S^1_{t}\right)-\phi(x)) \I_{\{t\leq \tau_1\}}\right]\right| +2\|\phi\|_{C[0, x]}\Pp^0\left(S_t^1\geq x\right)
	\leq 5\|\phi\|_{C[0, x]}. 
\end{gather*}
Since $\lim_{x\to 0} \|\phi\|_{C[0, x]} = \phi(0+)=0$, the claim follows.

Now check that $x\mapsto P_t^c\phi(x)$ is continuous. Pick $\phi\in C_\infty(0, T]$ and assume, without loss of generality, that $0\leq x\leq y$. Since $\phi$ is uniformly continuous, for every $\epsilon>0$ there is some $h>0$ such that $|\phi(x)-\phi(y)|\leq \epsilon$ for all $|x-y|\leq h$. Writing ${}^{z}S^c_t$ for the process $S^c_t$ with starting point $S^c_0=z$, we have
\begin{align*}
	&\left|P_t^c\phi(x)-P_t^c\phi(y)\right|
	=  \left|\Ee^x\phi\left(S^c_t\right)-\Ee^y\phi\left(S_t^c\right)\right|\\
	&\quad\leq \left|\Ee\left[\I_{\left\{\left|{}^xS^c_t-{}^{y}S^c_t\right|\leq h\right\}}\left[\phi({}^xS^c_t)-\phi({}^{y}S^c_t)\right]\right]\right|
	+ \left|\Ee\left[1_{\left\{\left|{}^xS^c_t-{}^{y}S^c_t\right|>h\right\}}\left[\phi({}^xS^c_t)-\phi({}^{y}S^c_t)\right]\right]\right|\\
	&\quad\leq \epsilon +2 \|\phi\|_{C[0,T]} \Pp\left(|{}^xS^c_t-{}^{y}S^c_t|>h\right).
\end{align*}
In order to deal with the second term, we define
\begin{align*}
	\nu(x,y)
	:= \inf\left\{n\in \nat\::\:\tau_n(x)\neq\tau_n(y)\right\}.
\end{align*}
We write $\tau_k:=\tau_k(y)$ for the $k$th censoring time of the process ${}^yS^c$, and calculate
\begin{align}
	\Pp\left(|{}^xS^c_t-{}^{y}S^c_t|>h\right)
	&\notag= \sum_{ k\in \nat}\Pp\left(|{}^xS^c_t-{}^{y}S^c_t|>h,\: \tau_{k}\leq t<\tau_{k+1}\right)\\
	&\notag\leq \sum_{k\in \nat} \Pp\left(|{}^xS^c_t-{}^{y}S^c_t|>h,\: \tau_{k}\leq t<\tau_{k+1},\: \nu(x,y)\leq k\right)\\
	&\notag\mbox{}\qquad +\Pp\left(|{}^xS^c_t-{}^{y}S^c_t|>h,\: \tau_{k}\leq t<\tau_{k+1},\: \nu(x,y)> k\right)\\
	&\notag= \sum_{k\in \nat} \Pp\left(|{}^xS^c_t-{}^{y}S^c_t|>h,\: \tau_{k}\leq t<\tau_{k+1},\: \nu(x,y)\leq k\right)\\
	&\label{pr-e05}\leq \sum_{k\in \nat}\sum_{i=1}^k\Pp\left(|{}^xS^c_t-{}^{y}S^c_t|>h,\: \tau_{k}\leq t<\tau_{k+1},\: \nu(x,y)=i\right). 
\end{align} 
In the second equality we note that on the set $\left\{\tau_{k}\leq t<\tau_{k+1},\: \nu(x,y)> k\right\}$, the processes ${}^xS^c_t$ and ${}^yS^c_t$ march in parallel until $t<\tau_{k+1}(y)$; since $x-y<h$, the estimate $|{}^xS^c_t-{}^{y}S^c_t|>h$ cannot hold, see Fig.~\ref{fig}. Further, using the independence of $S^i$ and ${}^yS^c_{\tau_{i-1}}$ and Lemma~\ref{sp-07}.\ref{sp-07-a}, 
\begin{align*}
	&\Pp\left(|{}^xS^c_t-{}^{y}S^c_t|>h,\tau_{k}\leq t<\tau_{k+1},\nu(x,y)=i\right)\\
	&\qquad\leq \Pp\left(\nu(x,y)=i\right)\\
	&\qquad\leq \Pp\left(S^i_{\sigma_{i}} \in \left({}^yS^c_{\tau_{i-1}},\,{}^yS^c_{\tau_{i-1}}+x-y\right),\: S^i_{\sigma_{i}-}\in \left(0,\,{}^yS^c_{\tau_{i-1}}\right)\right)\\
	&\qquad = \int_0^y \Pp\left(S^i_{\sigma_{i}}\in (z,z+x-y),S^i_{\sigma_{i}-}\in (0,z)\right) \,\Pp\left({}^yS^c_{\tau_{i-1}} \in dz\right)\\
	&\qquad = \int_0^y \Pp\left(S^i_{\sigma_{i}}\in (z,z+x-y),S^i_{\sigma_{i}-}\in (0,z)\right) \,k_{i-1}(y,z)\,dz\\
	&\qquad = \int_0^y \int_0^z \left[\bar\mu(z-a) - \bar\mu(z-a+x-y)\right] k(a)\,da \,k_{i-1}(y,z)\,dz
	\xrightarrow[x\to y]{}0.
\end{align*}
Because of
\begin{gather*}
	\sum_{i=1}^k \Pp\left(|{}^xS^c_t-{}^{y}S^c_t|>h,\:\tau_{k}\leq t<\tau_{k+1},\:\nu(x,y)=i\right)
	\leq \Pp\left(\tau_{k}\leq t<\tau_{k+1}\right)
\end{gather*} 
we can use dominated convergence in \eqref{pr-e05}, and see that $\lim_{x\to y}\Pp\left(|{}^xS^c_t-{}^{y}S^c_t|>h\right)=0$. Changing the roles of $x$ and $y$ in the proof implies that $\lim_{x-y\to 0}\left|P_t^c\phi(x)-P_t^c\phi(y)\right| = 0$, showing that $(P^c_t)_{t\geq 0}$ is a Feller semigroup.

\medskip\noindent
\emph{Step~3.} Identification of the generator. We know from Theorem~\ref{pr-05} that $\left(\cei{0}{}{f},C_\infty(0,T]\right)$ is the potential operator for the Feller semigroup $(P^c_t)_{t\geq 0}$. From Theorem~\ref{cvp-09} we see that $\ced{0}{}{f}: \cei{0}{}{f}\left(C_\infty(0,T]\right) \subset C_{\bar\mu}(0,T]  \to C_\infty(0,T]$ is the inverse operator, hence $-\ced{0}{}{f}$ is the infinitesimal generator of the semigroup.
\end{proof}

\begin{remark}\label{pr-09} 
	Our results also allow for an analytic construction of the censored semigroup $(P^c_t)_{t\geq 0}$ as a Feller semigroup on $C_\infty(0,T]$. The starting point is the generator $-\ced{0}{}{f}$ with domain $\cei{0}{}{f} \left(C_\infty(0,T]\right) \subset C_{\bar\mu}(0,T]\cap C_\infty(0,T]$, cf.\ Theorem~\ref{bd-19}.\ref{bd-19-a}. Since $C_c(0,T]\subset C_{\bar\mu}(0,T]$ it is clear that $-\ced{0}{}{f}$ is a densely defined linear operator in $C_\infty(0,T]$. From the integral representation in Remark~\ref{cvp-05} we see that $-\ced{0}{}{f}\phi(x_0)\leq 0$ at every maximum point $x_0$ of $\phi$. Thus, $-\ced{0}{}{f}$ satisfies the positive maximum principle, which implies dissipativity. Finally, Theorem~\ref{re-05} shows that the range of the operators $\left(\lambda - \ced{0}{}{f}\right)$, $\lambda>0$, is $C_\infty(0,T]$. Therefore, the conditions of the Hille--Yosida--Ray theorem, cf.\ \cite[Thm.~I.2.6, Thm.~IV.2.2]{2009_Ethier} are satisfied and we see that $-\ced{0}{}{f}$ generates a Feller semigroup $(P_t)_{t\geq 0}$. Using the argument of Step~3 in the proof of Theorem~\ref{pr-07} we can identify $P_t$ with the semigroup $P_t^c$ of the censored process $S^c$.
\end{remark}

From standard semigroup theory, see e.g.\ \cite{2009_Ethier}, we know that $\phi(t, x) = \Ee^x[g(S_t^c)]$, $g\in C[0,T]$, is the (unique) solution to the following Cauchy problem:
\begin{equation}\label{pr-e06}
	\left\{\begin{aligned}
	\partial_t \phi(t,x) &=-\ced{0}{}{f} \phi(t,x),\\ 
	\phi(0,x)&=g(x).
	\end{aligned}\right.
\end{equation}

When solving exit problems by computing the Laplace transform of the lifetime of a killed Markov process, we can obtain the analytical solution to the resolvent equation.
\begin{theorem}\label{pr-11}
	Let $(\bar\mu,k)$ be a positive Sonine pair and $\Lscr(\bar\mu; \lambda)=f(\lambda)/\lambda$, $\Lscr(k; \lambda)=1/f(\lambda)$, where $f$ is a Bernstein function satisfying the Assumptions~\ref{A1} and~\ref{A2}.  Moreover, assume that $\limsup_{x\to 0}\bar{\mu}(x)\int_0^x k(s)\,ds<1$.
	Let $T>0$. For any $\lambda>0$, $g\in C[0, T]$ and $x\in (0,T]$ one has
	\begin{equation}\label{pr-e08}
		\Ee^x\left[\int_0^{\tau_\infty} e^{-\lambda t}g(S_t^c)\,dt\right]
		= \sum_{n=0}^\infty (-\lambda)^n \left(\cei{0}{}{f}\right)^{n+1} g(x).
	\end{equation}
	In particular, we have
	\begin{align}\label{pr-e10}
		\Ee^x\left[e^{-\lambda \tau_\infty}\right]
		&=\sum_{n=0}^\infty (-\lambda)^n \left(\cei{0}{}{f}\right)^n \I(x)
	\intertext{and}\label{pr-e12}
		\left(\cei{0}{}{f}\right)^{n+1} g(x)
		&= \Ee^x\left[\int_0^{\tau_\infty} \frac{t^n}{n!} g(S_t^c)\,dt\right].
	\end{align}
\end{theorem}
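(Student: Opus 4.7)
The plan is to identify $\phi(x):=\Ee^x\!\bigl[\int_0^{\tau_\infty}e^{-\lambda t}g(S^c_t)\,dt\bigr]$ as the $\lambda$-resolvent $R_\lambda g$ of the censored Feller semigroup and invoke Theorem~\ref{re-05} to obtain \eqref{pr-e08}. For $\lambda>0$, the bound $e^{-\lambda t}\leq 1$, the boundedness of $g$, and $\Ee^x[\tau_\infty]<\infty$ (Lemma~\ref{sp-07}.\ref{sp-07-c}) guarantee that $\phi$ is well defined and bounded. Since $-\ced{0}{}{f}$ is the infinitesimal generator of $(P^c_t)$ by Theorem~\ref{pr-07}, Hille--Yosida theory places $\phi$ in the generator's domain and yields $\phi\in C_{\bar\mu}[0,T]$ with
\begin{equation*}
	\ced{0}{}{f}\phi(x)=-\lambda\phi(x)+g(x),\qquad \phi(0)=0,
\end{equation*}
where $\phi(0)=0$ follows from $\Pp^0(\tau_\infty=0)=1$ (immediate from Assumption~\ref{A1}, since $\int_{(0,1)}d\mu=+\infty$ forces the subordinator to jump immediately). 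Theorem~\ref{re-05}, applied with $\lambda$ replaced by $-\lambda$ and $\phi_0=0$, then yields the unique solution $\phi(x)=\sum_{n=0}^\infty(-\lambda)^n(\cei{0}{}{f})^{n+1}g(x)$, which is \eqref{pr-e08}.

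Equation \eqref{pr-e10} is the specialisation $g\equiv\I$: using $\int_0^{\tau_\infty}e^{-\lambda t}\,dt=\lambda^{-1}(1-e^{-\lambda\tau_\infty})$, taking $\Pp^x$-expectations and rearranging \eqref{pr-e08} gives
\begin{equation*}
	\Ee^x\!\bigl[e^{-\lambda\tau_\infty}\bigr]
	= 1-\lambda\sum_{n=0}^\infty(-\lambda)^n(\cei{0}{}{f})^{n+1}\I(x)
	= \sum_{n=0}^\infty(-\lambda)^n(\cei{0}{}{f})^{n}\I(x),
\end{equation*}
after re-indexing and using the convention $(\cei{0}{}{f})^0\I\equiv\I$.

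For \eqref{pr-e12} I would avoid Taylor-coefficient extraction from \eqref{pr-e08} (which would require exponential moments of $\tau_\infty$) and instead give a self-contained probabilistic induction on $n$. The base case $n=0$ is exactly Theorem~\ref{pr-05}. For the inductive step, apply Theorem~\ref{pr-05} to the continuous function $h:=(\cei{0}{}{f})^{n+1}g$, insert the induction hypothesis $h(y)=\Ee^y\!\bigl[\int_0^{\tau_\infty}(s^n/n!)g(S^c_s)\,ds\bigr]$, and invoke the strong Markov property of $S^c$ at time $t$ (Theorem~\ref{sp-03}), together with $\tau_\infty\circ\theta_t=\tau_\infty-t$ and the substitution $s\mapsto s-t$:
\begin{align*}
	(\cei{0}{}{f})^{n+2}g(x)
	&=\Ee^x\!\left[\int_0^{\tau_\infty}\Ee^{S^c_t}\!\!\left[\int_0^{\tau_\infty}\tfrac{s^n}{n!}g(S^c_s)\,ds\right]dt\right]\\
	&=\Ee^x\!\left[\int_0^{\tau_\infty}\!\int_t^{\tau_\infty}\tfrac{(s-t)^n}{n!}g(S^c_s)\,ds\,dt\right]\\
	&=\Ee^x\!\left[\int_0^{\tau_\infty}\tfrac{s^{n+1}}{(n+1)!}g(S^c_s)\,ds\right],
\end{align*}
where the last step is Fubini combined with $\int_0^s(s-t)^n\,dt=s^{n+1}/(n+1)$. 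This closes the induction.

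The main technical obstacle lies in the first paragraph: rigorously checking that $R_\lambda g$ sits in the domain $\cei{0}{}{f}(C_\infty(0,T])\subset C_{\bar\mu}(0,T]$ required by Theorem~\ref{re-05}, in particular because $g\in C[0,T]$ need not vanish at $0$. One handles this by decomposing $g=g(0+)\I+\tilde g$ with $\tilde g\in C_\infty(0,T]$ and using linearity of both sides of \eqref{pr-e08}. An entirely analytic route that bypasses the domain subtleties is to \emph{define} $\phi$ by the right-hand side of \eqref{pr-e08}; its convergence in $C_{\bar\mu}[0,T]$ and the fact that it solves the resolvent equation follow directly from the Picard estimates in the proof of Theorem~\ref{re-03}. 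The identification $\phi=R_\lambda g$ is then achieved by Dynkin's formula applied to $t\mapsto e^{-\lambda t}\phi(S^c_t)$ on letting $t\uparrow\tau_\infty$, using $\phi(\partial)=0$ and $\Ee^x[\tau_\infty]<\infty$ to justify passage to the limit.
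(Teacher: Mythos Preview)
Your argument for \eqref{pr-e08} and \eqref{pr-e10} is essentially the paper's: identify the left side as the resolvent of the censored Feller semigroup, invoke Hille--Yosida, and match with Theorem~\ref{re-05}. The paper handles the passage from $g\in C_\infty(0,T]$ to $g\in C[0,T]$ by monotone approximation ($0\leq g_k\uparrow g$ for $g\geq 0$, then positive/negative parts) rather than by your additive splitting $g=g(0+)\I+\tilde g$; note that your splitting does not directly resolve the domain issue, since $\I\notin C_\infty(0,T]$ either, so the approximation route (or your Dynkin alternative) is still needed for the constant piece.

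For \eqref{pr-e12} you take a genuinely different and more careful route. The paper just expands $e^{-\lambda t}$ as a power series inside \eqref{pr-e08} and ``compares coefficients of the resulting formal power series''; this is brief but leaves the justification (in particular, the finiteness of $\Ee^x[\tau_\infty^{n+1}]$ needed to interchange sum and expectation) implicit. Your induction via Theorem~\ref{pr-05}, the strong Markov property, and the beta-integral identity is self-contained, produces the moment bounds as a by-product, and sidesteps any analyticity issues.

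One small correction: ``$\phi(0)=0$ follows from $\Pp^0(\tau_\infty=0)=1$'' is not the right argument, since $S^c$ is not defined starting at $0$. What is needed is $\lim_{x\downarrow 0}\phi(x)=0$, and this follows from $|\phi(x)|\leq\|g\|_\infty\,\Ee^x[\tau_\infty]$ together with the bound $\Ee^x[\tau_\infty]\leq(1-q)^{-1}\int_0^x k(s)\,ds$ implicit in the proof of Lemma~\ref{sp-07}.\ref{sp-07-c}.
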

\begin{proof}
	From Theorem~\ref{pr-07} and Remark~\ref{pr-07} we know that $(P_t^c)_{t\geq 0}$ is the Feller semigroup on $C_\infty(0,T]$ with the generator $-\ced{g}{}{f}$. The Hille--Yosida--Ray theorem shows that the following resolvent equation has for every $g\in C_\infty(0,T]$ a unique solution
\begin{equation*} 
\left\{
\begin{aligned}
	-\ced{0}{}{f}\phi(x) &= \lambda \phi(x)-g(x), &&x\in (0, T],\\
	\phi(x) &=0, &&x=0; 
\end{aligned}
\right.
\end{equation*}
This solution $\phi = (\lambda + \ced{0}{}{f})^{-1}g$ is given by \eqref{pr-e08}: use
\begin{gather*}
	\left(\lambda + \ced{0}{}{f}\right)^{-1}g(x)
	= \int_0^\infty e^{-\lambda t} P_t^c g(x)\,dt
	= \int_0^\infty \Ee^x\left[ e^{-\lambda t} g(S_t^c)\right] dt
	= \Ee^x\left[\int_0^{\tau_\infty}  e^{-\lambda t} g(S_t^c)\, dt\right]
\end{gather*}
in conjunction with Theorem~\ref{re-05} and Remark~\ref{sp-02}.

In order to extend the equality \eqref{pr-e08} to any $g\in C[0, T]$, we take $g_k\in C_\infty(0, T	]$ such that $g_k$ converges to $g$ locally uniformly in $(0, T]$. The integrable majorant $\sup_{k\in\nat}\|g_k\|_{C[0, T]} e^{-\lambda t}\in L^1\left(\Pp^x\otimes dt\right)$ allows us to use dominated convergence to see 
\begin{gather*}
	\lim_{k\to\infty}\Ee^x\left[\int_0^{\tau_\infty}e^{-\lambda t}g_k(S_t^c)\,dt\right]
	= \Ee^x\left[\int_0^{\tau_\infty}e^{-\lambda t}g(S_t^c)\,dt\right].
\end{gather*}
This gives an approximation of the left-hand side \eqref{pr-e08}. 

For the right-hand side, we assume for a moment that $g\geq 0$ and $0\leq g_k\leq g$ increases to $g$. Since $\cei{0}{}{f}$ is  positivity preserving and linear,  we have $\cei{0}{}{f} g_k \uparrow \cei{0}{}{f} g$ as $k\to\infty$. Thus, 
\begin{gather*}
	\lim_{k\to\infty}\sum_{n=0}^\infty |\lambda|^n \left(\cei{0}{}{f}\right)^{n+1} g_k 
	= \sum_{n=0}^\infty |\lambda|^n \left(\cei{0}{}{f}\right)^{n+1} g.
\end{gather*}
The general case follows by considering positive and negative parts: let $g=g^+-g^-$ and take increasing sequences $g_n\to g^+$, $h_n\to g^-$ as $n\to\infty$. Therefore, \eqref{pr-e08} holds for all functions $g\in C[0, T]$.

For $g=\lambda\I$ the left-hand side of \eqref{pr-e08} becomes
\begin{gather*}
	\Ee^x\left[\int_0^{\tau_\infty}e^{-\lambda t}\lambda\,dt\right]
	= \Ee^x \left[1 - e^{-\lambda \tau_\infty}\right],
\end{gather*}
while the right-hand side of \eqref{pr-e08} is $\lambda\sum_{n=0}^\infty (-\lambda)^{n}\left(\cei{0}{}{f}\right)^{n+1}\I = 1-\sum_{n=0}^\infty (-\lambda)^n \left(\cei{0}{}{f}\right)^n \I$.
This proves \eqref{pr-e10}.

Finally, \eqref{pr-e12} follows if we use the exponential series on the left-hand side of \eqref{pr-e08} and compare coefficients of the resulting formal power series.
\end{proof}

\appendix

\section{Positive Sonine pairs and Bernstein functions}\label{app}

In this appendix we use an extended version of Definition~\ref{bd-09}. Let $(g,\nu)$ be a pair consisting of a measurable function $g:(0,\infty)\to [0,\infty)$ and Borel measure on $\left([0,\infty),\mathcal{B}[0,\infty)\right)$, which is finite on compact subsets of $(0,\infty)$. We call $(g,\nu)$ a \textbf{positive Sonine pair}, if the convolution equation
\begin{gather}\label{app-e02}
	g*\nu(x) := \int_{(0,t)} g(x-t)\,\nu(dt) = 1\quad\text{for all\ \ } t\in (0,1)
\end{gather}
holds. Since $g$ is positive and measurable, the convolution $g*\nu$ is always well-defined in $[0,\infty]$.

As the convolution of two measures is, in general, a measure, it is clear that one of the factors in a positive Sonine pair has to be a function. If we approximate $(g,\nu)$ by an increasing sequence of functions $g_n := (n\wedge g)\I_{[n^{-1},n]}\in L^1(dx)$ resp.\ finite measures $\nu_n := \nu\left(\bullet\cap [n^{-1},n]\right)$, we can use monotone convergence to calculate the Laplace transform of $g*\nu = \sup_n \Lscr(g_n*\nu_n)$, and we get
\begin{gather}\label{app-e04}
	\Lscr(g*\nu;\lambda) = \Lscr(g;\lambda)\Lscr(\nu,\lambda) = \Lscr(\I,\lambda)= \frac 1\lambda,\quad \lambda >0.
\end{gather}
From this we see that necessarily $g\in L^1_{\mathrm{loc}}[0,\infty)$ and $\nu(0,1)<\infty$. This proves

\begin{lemma}\label{app-03}
	Let $(g,\nu)$ be a positive Sonine pair. Then $g\in L^1_{\mathrm{loc}}[0,\infty)$, i.e.\ $\int_0^1 g(x)\,dx < \infty$ and $\nu(0,1)<\infty$.
	If $\nu(dx) = h(x)\,dx$, then $h\in L^1_{\mathrm{loc}}[0,\infty)$.
\end{lemma}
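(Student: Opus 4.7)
The plan is to deduce the lemma directly from the Laplace-transform identity already forged in the paragraph preceding the statement. Recall that after truncating $g$ and $\nu$ by $g_n := (n\wedge g)\I_{[1/n,n]}$ and $\nu_n := \nu(\bullet\cap[1/n,n])$, monotone convergence combined with the convolution theorem for the integrable $g_n$ and the finite $\nu_n$ yields
\begin{equation*}
    \Lscr(g;\lambda)\,\Lscr(\nu;\lambda) \;=\; \Lscr(g*\nu;\lambda) \;=\; \Lscr(\I;\lambda) \;=\; \frac{1}{\lambda}, \qquad \lambda>0.
\end{equation*}
Everything I want will be read off from this identity.

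First I would note that both Laplace transforms are \emph{strictly positive}: $\Lscr(g;\lambda_0)=0$ for some $\lambda_0>0$ would make the product on the left-hand side vanish, contradicting $1/\lambda_0\neq 0$; the same reasoning rules out $\Lscr(\nu;\lambda_0)=0$. Since their product is finite, each factor is then finite for every $\lambda>0$.

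The rest is a routine Laplace estimate. Fixing any $\lambda_0>0$ and any $R>0$, I would use
\begin{equation*}
    \int_0^R g(x)\,dx \;\le\; e^{\lambda_0 R}\int_0^R e^{-\lambda_0 x}g(x)\,dx \;\le\; e^{\lambda_0 R}\Lscr(g;\lambda_0) \;<\; \infty,
\end{equation*}
to conclude $g\in L^1_{\mathrm{loc}}[0,\infty)$. The identical estimate applied to $\nu$ gives $\nu(0,R)\le e^{\lambda_0 R}\Lscr(\nu;\lambda_0)<\infty$, so in particular $\nu(0,1)<\infty$. If $\nu=h\,dx$, then the argument just used for $g$ applies verbatim to $h$, yielding $h\in L^1_{\mathrm{loc}}[0,\infty)$.

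I do not anticipate any obstacle: the lemma is really an accounting of what the identity above already delivers. The only step deserving a line of care is the strict positivity of both Laplace transforms, needed to rule out an indeterminate $0\cdot\infty$ before concluding that a finite product forces each factor to be finite.
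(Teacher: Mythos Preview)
Your proposal is correct and follows essentially the same approach as the paper: the paper simply states that the conclusions follow ``from this'' (the Laplace identity $\Lscr(g;\lambda)\Lscr(\nu;\lambda)=1/\lambda$), and you have spelled out exactly how---the positivity of each factor and the standard bound $\int_0^R g\le e^{\lambda_0 R}\Lscr(g;\lambda_0)$. The only cosmetic point is that your line ``would make the product on the left-hand side vanish'' tacitly uses the convention $0\cdot\infty=0$; this is harmless (it is the convention implicit in the monotone-convergence derivation), but you could equally note that $\Lscr(g;\lambda_0)=0$ forces $g=0$ a.e., hence $g*\nu=0$, contradicting $g*\nu\equiv 1$ directly.
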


We are interested in the relation of positive Sonine pairs and Bernstein functions. Let $f\in\BF$ be a \textbf{Bernstein function} with triplet $(a,b,\mu)$, i.e.\
\begin{equation}\label{app-e06}
	f(\lambda) = a+b \lambda+\int_0^\infty \left(1-e^{-\lambda t}\right) \mu(dt),\quad \lambda>0,
\end{equation}
where $a,b\geq 0$ and $\mu$ is a measure such that $\int_{(0,\infty)} \left(t\wedge 1\right) \mu(dt)<\infty$. If $\mu(dt) = m(t)\,dt$ with a completely monotone function $m\in\CM$, we call $f$ a \textbf{complete Bernstein function} and write $f\in\CBF$. If $f\in\BF$ is such that the conjugate function $f^\star(\lambda):=\lambda/f(\lambda)$, then $f$ is said to be a \textbf{special Bernstein function}, and we write $f\in\SBF$. It is well known, cf.\ \cite{2012_Schilling}, that $\CBF\subsetneqq\SBF$ as well as
\begin{gather*}
	f\in\CBF \iff f^\star\in\CBF
	\quad\text{and}\quad
	f\in\SBF \iff f^\star\in\SBF.
\end{gather*}

\begin{theorem}\label{app-05} 
	Let  $f\in \SBF$ be a special Bernstein function with triplet $(a, b, \mu)$, $\bar\mu(x):=\mu[x, \infty)$, and $f^\star\in\SBF$ its conjugate function with triplet $(a^\star, b^\star, \mu^\star)$, $k(x):=\mu^\star [x, \infty)$. Then 
\begin{gather}\label{app-e08}\begin{aligned}
	\frac{f(\lambda)}{\lambda}	&= \Lscr\left(a+\bar\mu+b\delta_0;\lambda\right),\\
	\frac{1}{f(\lambda)} 		&= \Lscr\left(a^\star+k+b^\star\delta_0;\lambda\right).
\end{aligned}\end{gather}
Moreover, using the convention that $1/\infty=0$, one has
\begin{gather}\label{app-e10}
	f^\star(0)
	= a^\star 
	= \lim_{\lambda \to 0} \frac{\lambda}{f(\lambda)}
	= \begin{cases}
		0, &\quad a>0, \\
		\frac 1{b+\int_0^\infty t \mu(dt)}, &\quad a=0,
	\end{cases}
\\\label{app-e12}
	b^\star
	= \lim_{\lambda\to \infty}\frac{ f^\star (\lambda)}{\lambda}
	= \lim_{\lambda\to \infty}\frac{1}{f(\lambda)}
	= \begin{cases}
		0, &\quad b>0,\\
		\frac 1{a+\int_0^\infty \mu(dt)}, &\quad b=0,
	\end{cases}
\end{gather}
and $(a+\bar\mu)+b\delta_0$ and $(a^\star+k)+b^\star\delta_0$ is positive Sonine pair \textup{(}note that $aa^\star = bb^\star = 0$\textup{)}.  If $b=b^\star=0$, then $a+\bar\mu$ and $a^\star+k$ is a positive Sonine pair of decreasing functions.
\end{theorem}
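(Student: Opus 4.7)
The plan is to verify the two Laplace transform identities \eqref{app-e08} by a direct calculation, to read off $a^\star$ and $b^\star$ from the limiting behaviour of $f^\star$, and then to deduce the Sonine pair property via uniqueness of the Laplace transform.

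First, starting from the L\'evy--Khintchine representation \eqref{app-e06}, I will apply the identity $1-e^{-\lambda t}=\lambda\int_0^t e^{-\lambda s}\,ds$ together with Fubini's theorem to rewrite
\[
\int_0^\infty\!(1-e^{-\lambda t})\,\mu(dt) = \lambda\int_0^\infty e^{-\lambda s}\bar\mu(s)\,ds.
\]
Dividing $f(\lambda)$ by $\lambda$ and recognising $a/\lambda=\Lscr(a;\lambda)$ (the constant function $a$) and $b=\Lscr(b\delta_0;\lambda)$ yields the first identity in \eqref{app-e08}. Since $f\in\SBF$ gives $f^\star\in\SBF\subset\BF$ with triplet $(a^\star,b^\star,\mu^\star)$, the same computation applied to $f^\star$ yields $f^\star(\lambda)/\lambda=\Lscr(a^\star+k+b^\star\delta_0;\lambda)$, which is exactly $1/f(\lambda)$ by definition of the conjugate.

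Next, I will evaluate the boundary limits of $f^\star(\lambda)=\lambda/f(\lambda)$ at $\lambda\downarrow 0$ and of $f^\star(\lambda)/\lambda=1/f(\lambda)$ at $\lambda\uparrow\infty$ directly from the formula for $f$. Because $(1-e^{-\lambda t})/\lambda\uparrow t$ as $\lambda\downarrow 0$ and $1-e^{-\lambda t}\uparrow \I_{(0,\infty)}(t)$ as $\lambda\uparrow\infty$, monotone convergence produces the closed-form expressions in \eqref{app-e10} and \eqref{app-e12} (with the convention $1/\infty=0$ absorbing the degenerate cases $\int t\,\mu(dt)=\infty$ or $\int\mu(dt)=\infty$). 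These formulas immediately give $a\,a^\star=0$ and $b\,b^\star=0$, since $a>0$ forces $a^\star=0$ and $b>0$ forces $b^\star=0$.

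For the Sonine pair property I will multiply the two identities in \eqref{app-e08} to obtain
\[
\Lscr(a+\bar\mu+b\delta_0;\lambda)\cdot\Lscr(a^\star+k+b^\star\delta_0;\lambda)=\frac{f(\lambda)}{\lambda}\cdot\frac{1}{f(\lambda)}=\frac{1}{\lambda}=\Lscr(1;\lambda),\quad\lambda>0,
\]
and then invoke the convolution theorem together with uniqueness of the Laplace transform (both sides being finite for every $\lambda>0$) to conclude that $(a+\bar\mu+b\delta_0)*(a^\star+k+b^\star\delta_0)\equiv 1$ on $(0,\infty)$. In the special case $b=b^\star=0$ the Dirac summands vanish and the Sonine pair reduces to $(a+\bar\mu,\,a^\star+k)$, both of which are constants plus tails of Borel measures, hence decreasing on $(0,\infty)$.

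I do not expect a serious obstacle; the main technical point is verifying that each Laplace transform appearing above is finite for $\lambda>0$, which follows from $f(\lambda)\in(0,\infty)$ combined with Lemma~\ref{app-03} (ensuring local integrability of $\bar\mu$ and $k$ and finiteness of the measures near the origin), and checking the boundary limits in \eqref{app-e10}--\eqref{app-e12} in the borderline cases where the conventions $1/\infty=0$ come into play.
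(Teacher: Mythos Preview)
Your proposal is correct and follows essentially the same route as the paper: the Fubini computation for $f(\lambda)/\lambda$, the application of the same identity to $f^\star$ to get $1/f(\lambda)$, and the multiplication $\frac{f(\lambda)}{\lambda}\cdot\frac{1}{f(\lambda)}=\frac{1}{\lambda}$ to obtain the Sonine pair. The only cosmetic difference is that you compute the limits in \eqref{app-e10}--\eqref{app-e12} directly via monotone convergence, whereas the paper simply cites \cite[Ch.~11]{2012_Schilling} for these standard facts about the triplet of a conjugate Bernstein function.
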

\begin{proof}
We have  $f(\lambda)=a+b\lambda +\int_{(0,\infty)} \left(1-e^{-\lambda t}\right) \mu(dt)$, and by the Fubini--Tonelli theorem 
\begin{gather}\label{app-e14}\begin{aligned}
	\frac{f(\lambda)}{\lambda}
	&= \frac{a}{\lambda}+b+\int_{(0,\infty)} \frac{1-e^{-\lambda t}}{\lambda}\,\mu(dt)\\
	&= \frac{a}{\lambda}+b+\int_{(0,\infty)} \int_0^t e^{-\lambda t}\,dt\,\mu(dt)\\
	&= \frac{a}{\lambda}+b+\int_0^\infty \int_{(x,\infty)} \mu(dt)\, e^{-\lambda t}\,dt\\
	&= \frac{a}{\lambda}+b+\int_0^\infty\bar\mu(t) e^{-\lambda t}\,dt\\
	&= \Lscr(a+b\delta_0+\bar\mu; \lambda).
\end{aligned}\end{gather}
Since $f\in \SBF$, we have $f^\star\in\SBF$, and writing $(a^\star, b^\star,\mu^\star)$ for its triplet, we get 
\begin{gather*}
	\frac1{f(\lambda)}
	= \frac{f^\star(\lambda)}{\lambda}
	= \Lscr\left(a^\star+b^\star\delta_0+k; \lambda\right).
\end{gather*}
Since  $\frac{1}{\lambda}=\frac{1}{f(\lambda)} \frac{f(\lambda)}{\lambda}$, it is obvious that $a+b\delta_0+\bar\mu$ and $a^\star+b^\star\delta_0+k$ is a Sonine pair (note that $bb^\star=0$, i.e.\ at least one Sonine factor is a function) and the relations \eqref{app-e10} and \eqref{app-e12} follow from \cite[Ch.~11]{2012_Schilling} or Theorem 2.3.11 in \cite{2023_Li}.  

Note that (see Table \ref{tab:table1}) $b=b^\star=0$ if, and only if, we are in the case (1), (2), (4) as shown in Table \ref{tab:table1}. Since $\bar\mu(x) = \mu(x,\infty)$ and $k(x) = \mu^\star(x,\infty)$, it is clear that $a+\bar\mu$ and $a^\star+k$ are decreasing functions.
\end{proof}

\renewcommand{\arraystretch}{1.5}
\begin{table}[h!]
\begin{center}
\caption{Overview of all cases covered by \eqref{app-e08} and \eqref{app-e10}.}\label{tab:table1}
\begin{tabular}{c|c|c|c|c|c|c}
Nr. & $a$ & $b$ & $m_0=\int_{(0,\infty)} \mu(dt)$ & $m_1=\int_{(0,\infty)} t \,\mu(dt)$ & $a^\star$ & $b^\star$\\ 
\hline
(1) & $0$ & $0$ & --- (no condition)& $\infty$ & $0$ &$0$\\
(2) & $0$ & $0$ & $\infty$ & $<\infty$ &$1/m_1$ & $0$\\
(3) & $0$ & $0$ & $<\infty$ & $<\infty$ & $1/m_1$ & $1/m_0$\\
(4) & $>0$ & $0$ & $\infty$ & --- & $0$ & $0$\\
(5) & $>0$ & $0$ & $<\infty$ & --- & $0$ & $1/(a+m_0)$\\
(6) & $0$ & $>0$ & --- & $\infty$ & $0$ & $0$\\
(7) & $0$ & $>0$ & --- & $<\infty$ & $1/(b+m_1)$ & $0$\\
(8) & $>0$ & $>0$ & --- & --- & $0$ & $0$
\end{tabular}
\end{center}
\end{table} 

\begin{corollary}\label{app-07}
	If, in the setting of Theorem~\ref{app-05}, $f\in\CBF$, then $f^\star\in\CBF$ and $a+\bar\mu$ and $a^\star+k$ are completely monotone functions. In particular, if $b=b^\star=0$, then $(a+\bar\mu,a^\star+k)$ is a positive Sonine pair of completely monotone functions. 
\end{corollary}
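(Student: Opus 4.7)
The plan is to exploit the stability of the complete Bernstein class under conjugation, together with Bernstein's representation theorem for completely monotone functions. Since the conjugacy $f \leftrightarrow f^\star$ maps $\CBF$ to itself (a fact cited from \cite[Prop.~7.1]{2012_Schilling}), if $f\in\CBF$ then both $\mu$ and $\mu^\star$ have completely monotone densities $m,m^\star$, so it suffices to show that the tail $\bar\mu$ of a Lévy measure with CM density is itself completely monotone; by symmetry, the same argument will give the analogous statement for $k = \bar\mu^\star$.

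First, I would apply Bernstein's theorem to write $m(t) = \int_{[0,\infty)} e^{-ts}\,\rho(ds)$ for some Borel measure $\rho$ on $[0,\infty)$. Using Tonelli's theorem on the positive integrand yields, for $x>0$,
\begin{equation*}
	\bar\mu(x) = \int_x^\infty m(t)\,dt
	= \int_{(0,\infty)} \int_x^\infty e^{-ts}\,dt\,\rho(ds)
	= \int_{(0,\infty)} \frac{e^{-xs}}{s}\,\rho(ds),
\end{equation*}
where the exclusion of $s=0$ from the support of $\rho$ and the local integrability of $s\mapsto 1/s$ against $\rho$ near $0$ are forced by the finiteness of $\bar\mu(x)$ on $(0,\infty)$, which in turn follows from the Lévy condition $\int_0^\infty\min\{1,t\}\,\mu(dt)<\infty$. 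The last display exhibits $\bar\mu$ as the Laplace transform of the measure $s^{-1}\rho(ds)$ on $(0,\infty)$, so $\bar\mu\in\CM$ by the other direction of Bernstein's theorem. Since constants are trivially completely monotone and $\CM$ is closed under sums, $a+\bar\mu\in\CM$; replacing $(f,\mu,a,\bar\mu)$ by $(f^\star,\mu^\star,a^\star,k)$ gives $a^\star+k\in\CM$.

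For the second assertion, I would simply observe that Theorem~\ref{app-05} already provides the positive Sonine identity
\begin{equation*}
	(a+b\delta_0+\bar\mu) * (a^\star + b^\star\delta_0 + k) = \I \quad \text{on } (0,\infty).
\end{equation*}
Under the additional hypothesis $b=b^\star=0$ the Dirac components drop out, leaving $(a+\bar\mu)*(a^\star+k)\equiv 1$ on $(0,\infty)$, which is exactly the Sonine condition of Definition~\ref{bd-09} for the pair of functions $(a+\bar\mu,a^\star+k)$; combined with the complete monotonicity established above, this finishes the proof.

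I do not expect any serious obstacle: the only point requiring a moment of care is the justification that $\rho(\{0\})=0$ and $\int_{(0,1]} s^{-1}\rho(ds)<\infty$, so that the Laplace representation of $\bar\mu$ actually makes sense. This is a routine consequence of Tonelli applied to the Lévy integrability condition, and can be dispatched in one line.
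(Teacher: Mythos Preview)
Your proof is correct. The argument via Bernstein's representation of the completely monotone density $m$ is sound: the L\'evy integrability condition $\int_0^\infty(1\wedge t)\,m(t)\,dt<\infty$ forces $\rho(\{0\})=0$ (otherwise $m$ would be bounded below by a positive constant and $\int_1^\infty m(t)\,dt$ would diverge), and the finiteness of $\bar\mu(x)$ then guarantees that $s^{-1}\rho(ds)$ is a bona fide Borel measure on $(0,\infty)$ whose Laplace transform is $\bar\mu$. The symmetry step and the reduction of the Sonine identity to the case $b=b^\star=0$ are also handled correctly.

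Your route differs from the paper's. The paper argues at the level of function classes: for $f\in\CBF$ the functions $f(\lambda)/\lambda$ and $1/f(\lambda)$ are Stieltjes functions, and a Stieltjes function is precisely the Laplace transform of a measure of the form $b\delta_0+(a+g(x))\,dx$ with $g\in\CM$; comparing with the representation \eqref{app-e08} from Theorem~\ref{app-05} then forces $\bar\mu,k\in\CM$ by uniqueness of the Laplace transform. This is shorter but relies on the structure theory of Stieltjes functions from \cite[Ch.~7]{2012_Schilling}. Your argument is more elementary and self-contained: it works directly with the density $m$ and never mentions Stieltjes functions, at the cost of one explicit Tonelli computation. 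Either approach is perfectly adequate here.
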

\begin{proof}
	This follows from the proof of Theorem~\ref{app-05}: observe that $f\in\CBF$ entails that the functions $f(\lambda)/\lambda$ and $1/f(\lambda)$ appearing in \eqref{app-e08} are Stieltjes functions, implying that $\bar\mu, k\in\CM$, cf.\ \cite[Ch.~7]{2012_Schilling}.
\end{proof}

\begin{remark}\label{app-09}
	The standing assumption \ref{A1}, see page \pageref{A2}, of the main text ensures that $a=a^\star=0$ and $b=b^\star=0$.
	
	In this case, Theorem~\ref{app-05} (resp.\ Corollary~\ref{app-07}) is actually a one-to-one correspondence between $\SBF$ (resp.\ $\CBF$) and positive Sonine pairs comprising of decreasing (resp.\ completely monotone) functions of the form $\mu(x,\infty)$, $\mu^\star(x,\infty)$ where $(a,b,\mu)$ and $(a^\star,b^\star,\mu^\star)$ are triplets of conjugate Bernstein functions. 
	
	This correspondence remains valid in the case when $aa^\star = 0 = bb^\star$, but then one factor of the Sonine pair will have a $\delta_0$-component. In general, there are further positive Sonine pairs related to Bernstein functions. Consider, for example, a completely monotone function $\Lscr(u;\lambda)$ where $u:(0,\infty)\to\real$ is a continuous function, which is not decreasing, such that $\Lscr(u;\lambda) = 1/f(\lambda)$ is the potential of a subordinator with Bernstein function $f\in\BF$ with $b>0$, see e.g.\ Bertoin \cite[Prop.~1.7]{1999_Bertoin}. Since $f(\lambda)/\lambda$ is completely monotone, we get
\begin{gather*}
	\frac 1\lambda = \frac 1{f(\lambda)} \frac{f(\lambda)}{\lambda} = \Lscr(u;\lambda)\Lscr(\nu;\lambda) = \Lscr(u*\nu;\lambda),
\end{gather*}
i.e.\ $(u,\nu)$ is a positive Sonine pair induced by the Bernstein function $f$. Since $\lambda \Lscr(\nu,\lambda) = f(\lambda)$, the calculation \eqref{app-e14} reveals that
\begin{gather*}
	\lambda\Lscr\left(\nu;\lambda\right)
	= f(\lambda)
	= \lambda\Lscr\left(a + b\delta_0 + \bar\mu;\lambda\right),
\end{gather*}
i.e.\ $\nu(dx) = b\delta_0(dx) + (a+\bar\mu(x))\,dx$. Notice that $f$ cannot be a special Bernstein function.
\end{remark}

\section*{Declaration of interests}
{The authors report no conflict of interest.}

\frenchspacing

\end{document}